\documentclass[11pt]{amsart}
\usepackage{amssymb}
\usepackage{amscd}
\usepackage{aliascnt}
\usepackage[all]{xy}
\usepackage{comment}

\numberwithin{equation}{section}

\def\today{\number\day\space\ifcase\month\or   January\or February\or
   March\or April\or May\or June\or   July\or August\or September\or
   October\or November\or December\fi\   \number\year}

\theoremstyle{definition}
\newtheorem{thm}{Theorem}[section]

\newaliascnt{lem}{thm}
\newtheorem{lem}[lem]{Lemma}
\aliascntresetthe{lem}

\newaliascnt{prp}{thm}
\newtheorem{prp}[prp]{Proposition}
\aliascntresetthe{prp}

\newaliascnt{dfn}{thm}
\newtheorem{dfn}[dfn]{Definition}
\aliascntresetthe{dfn}

\newaliascnt{cor}{thm}
\newtheorem{cor}[cor]{Corollary}
\aliascntresetthe{cor}

\newaliascnt{cnj}{thm}

\aliascntresetthe{cnj}

\newaliascnt{cnv}{thm}

\aliascntresetthe{cnv}

\newaliascnt{rmk}{thm}
\newtheorem{rmk}[rmk]{Remark}
\aliascntresetthe{rmk}

\newaliascnt{ntn}{thm}
\newtheorem{ntn}[ntn]{Notation}
\aliascntresetthe{ntn}

\newaliascnt{exa}{thm}
\newtheorem{exa}[exa]{Example}
\aliascntresetthe{exa}

\newaliascnt{pbm}{thm}

\aliascntresetthe{pbm}

\newaliascnt{wrn}{thm}

\aliascntresetthe{wrn}

\newaliascnt{qst}{thm}
\newtheorem{qst}[qst]{Question}
\aliascntresetthe{qst}

\newaliascnt{exr}{thm}

\aliascntresetthe{exr}

\usepackage[capitalize]{cleveref}
\crefname{thm}{Theorem}{Theorems}
\crefname{lem}{Lemma}{Lemmas}
\crefname{prp}{Proposition}{Propositions}
\crefname{cor}{Corollary}{Corollaries}
\crefname{rmk}{Remark}{Remarks}
\crefname{dfn}{Definition}{Definitions}
\crefname{exa}{Example}{Examples}
\crefname{equation}{Equation}{Equations}

\newcommand{\beq}{\begin{equation}}
\newcommand{\eeq}{\end{equation}}
\newcommand{\beqr}{\begin{eqnarray*}}
\newcommand{\eeqr}{\end{eqnarray*}}
\newcommand{\bal}{\begin{align*}}
\newcommand{\eal}{\end{align*}}
\newcommand{\bei}{\begin{itemize}}
\newcommand{\eei}{\end{itemize}}

\newcommand{\Q}{{\mathbb{Q}}}
\newcommand{\Z}{{\mathbb{Z}}}
\newcommand{\R}{{\mathbb{R}}}
\newcommand{\C}{{\mathbb{C}}}
\newcommand{\N}{{\mathbb{Z}}_{> 0}}
\newcommand{\Nz}{{\mathbb{Z}}_{\geq 0}}
\newcommand{\T}{{\mathbb{T}}}

\newcommand{\Ea}{E^{\text{alg}}}

\newcommand{\id}{{\operatorname{id}}}
\newcommand{\ev}{{\operatorname{ev}}}

\newcommand{\rank}{{\operatorname{rank}}}

\newcommand{\card}{{\operatorname{card}}}

\newcommand{\Avg}{{\operatorname{Avg}}}
\newcommand{\Fl}{{\operatorname{Fl}}}
\newcommand{\Gr}{{\operatorname{Gr}}}
\newcommand{\Stab}{{\operatorname{Stab}}}

\newcommand{\cC}{{\mathcal{C}}}

\newcommand{\vH}{\check{H}}

\newcommand{\andeqn}{\qquad {\mbox{and}} \qquad}

\newcommand{\ca}{C*-algebra}

\newcommand{\fl}{\operatorname{Fl}}
\newcommand{\fla}{\operatorname{Fl}^{\text{alg}}}

\newcommand{\Ch}{\operatorname{Ch}}

\newcommand{\alg}{\mathrm{alg}}

\title[Obstructions to homomorphisms]{Obstructions to homomorphisms between homogeneous C*-algebras}   

\author{Ilan Hirshberg}
\address{Department of Mathematics, Ben-Gurion University of the Negev,
	P.O.B. 653, Be'er Sheva 84105, Israel}

\author{N. Christopher Phillips}
\address{Department of Mathematics, University  of Oregon,
	Eugene OR 97403-1222, USA.}

\makeatletter
\@namedef{subjclassname@2020}{\textup{2020} Mathematics Subject Classification}
\makeatother
\subjclass[2020]{46L80, 19K14}

\thanks{This material is based upon work supported by the
	US National Science Foundation under
	Grant DMS-2400332, and by the
	US-Israel Binational Science Foundation.}

\date{14~July 2026}

\begin{document}

\begin{abstract}
We develop a method to find new obstructions to the existence of homomorphisms between homogeneous C*-algebras with prescribed behavior on $K$-theory. As an application, we give a complete answer to a problem posed by Blackadar in 1993 concerning the existence of unital homomorphisms between algebras of matrix-valued functions on even spheres which are injective on $K_0$: we show that if  $n,d,k,r\in\N$ and
$k\notin \{n,n+1,\ldots,n+d-1\}$,
then there is no unital homomorphism 
\[
\varphi\colon C(S^{2n},M_r) \to C(S^{2k},M_{dr})
\]
such that $\varphi_*$ is injective on $K_0$.
\end{abstract}

\maketitle

\indent

\section{Introduction}\label{Sec_5904_Intro}

\indent
We find a new obstruction to the existence of homomorphisms between
homogeneous C*-algebras with specified behavior on $K$-theory.  We use
this obstruction to prove nonexistence results in cases in which ordered
$K$-theory does not rule out existence.  The methods give a complete
answer to a question raised by Blackadar in 1993; see
\cite[Question~3.16 and Proposition~3.15]{BlackadarMUT} and the surrounding discussion.  Blackadar asked, in
particular, whether there are unital homomorphisms
\[
        C(S^{2n},M_{2n}) \to C(S^{4n},M_{4n})
\]
which are injective on $K_0$, with the idea of using modifications of them to construct
perforated simple AH algebras.
As explained in \cite[Proposition~3.15]{BlackadarMUT}, there is no obstruction
in ordered $K$-theory to the existence of such maps.
Examples of perforated simple AH algebras were eventually constructed by Villadsen in \cite{VilladsenPerforation} using different techniques.
However, Blackadar's problem remained open.

We show that such homomorphisms do not exist for $n>1$.  More generally, let
$K$ denote the algebra of compact operators on a separable infinite-dimensional
Hilbert space.  We prove (\cref{thm:rank-d-obstruction})  that if $n,d,k,r\in\N$ and
\[
        k\notin \{n,n+1,\ldots,n+d-1\},
\]
then there is no homomorphism
\[
        \varphi\colon C(S^{2n},M_r) \to C(S^{2k},K)
\]
such that $\varphi_*$ is injective on $K_0$ and $\varphi(1)$ has constant
rank $dr$.

 This obstruction is sharp. 
For $k\in\{n,n+1,\ldots,n+d-1\}$, Blackadar pointed out (\cite[page 34]{BlackadarMUT})
that unital homomorphisms $C(S^{2n},M_r) \to C(S^{2k},M_{dr})$
which are injective on $K_0$ do exist.
The argument is as follows.
For any $m \in \N$ there exists a unitary $u \in C(S^{2m-1},M_m)$
which generates $K_1 (C(S^{2m-1}))$.
As unitaries define maps from $C(S^1)$, this means that there exists a unital homomorphism from $C(S^1)$ to $C(S^{2m-1},M_m)$ which induces an isomorphism on $K_1$.
The $(2n-1)$-fold suspension map gives us a unital homomorphism from $C(S^{2n})$ to $C(S^{2m+2n-2},M_m)$ which induces an injective homomorphism on $K_0$. We can replace the matrix size $m$ by a larger matrix size by adding point evaluations down the diagonal. So setting $k = m+n-1$ shows us that such homomorphisms exist provided $k$ is in the specified range. (If such a homomorphism exists for $r=1$, we can get it for any $r$ by tensoring by $M_r$.)
 The same argument, using one suspension less, shows that if $k\in\{n,n+1,\ldots,n+d-1\}$ then there exists a unital homomorphism from $C(S^{2n-1})$ to $C(S^{2k-1},M_d)$ which is injective on $K_1$; we deduce the converse in \cref{cor:odd-sphere-obstruction}. 

 While our result shows that Blackadar's intended construction of perforated simple AH algebras cannot work, our work does reveal a new unstable phenomenon, and it now becomes interesting to see whether there are simple C*-algebras that exhibit it. See Question~\ref{qst:exotic_AH}.

In \cite{DadarlatNemethiShape}, D{\u{a}}d{\u{a}}rlat and N{\'e}methi studied related questions, which are complementary in a sense. Their work focused on the case in which the matrix sizes are large.
We denote the space of unital homomorphisms from $C(Y)$ to $M_d$ by $E_{d,Y,d}$. If we fix a base-point $y_0 \in Y$, we get a stabilization map 
$E_{d,Y,d} \to E_{d+1,Y,d+1}$
given by putting a point evaluation at $y_0$ in the bottom right corner. 
 In \cite[Theorem~6.4.2]{DadarlatNemethiShape}, it is shown that for a
connected finite CW complex $Y$, this stabilization map 
is a $2\lfloor d/3\rfloor$-homotopy equivalence for $d\ge 3$.
It follows from \cite[Corollary~6.4.4 and Remark~6.4.5]{DadarlatNemethiShape}
that, if $X$ is a finite CW complex with
$\dim(X)<2\lfloor d/3\rfloor$,
then the set $[C(Y),C(X,M_d)]_1$
of homotopy classes of unital homomorphisms from $C(Y)$ to $C(X,M_d)$
can be identified with the connective $kk$-theory group $kk(X,Y)$
defined in their paper.
Applying this with $Y=S^{2n}$ and $X=S^{2k}$, one can show, using the results of
\cite[Section~3.3]{DadarlatNemethiShape}, that there are
unital homomorphisms from $C(S^{2n})$ to $C(S^{2k},M_d)$
which are injective on $K_0$ whenever $k\geq n$ and
$d\ge 3k+3$.  In this case, Blackadar's suspension construction gives 
existence in the much larger range $k \geq n$ and $d \geq k-n+1$.

In contrast to the work of D{\u{a}}d{\u{a}}rlat and N{\'e}methi, we look at the unstable case, in which the matrix sizes are small. 
Let $X$ and $Y$ be compact Hausdorff
spaces.  We consider homomorphisms
\[
\varphi\colon C(Y) \to C(X,K)
\]
such that $\varphi(1)$ has constant rank $d$.  If we denote by $E_{d,Y}$ the space of homomorphisms from $C(Y)$ to $ K$ which send $1$ to a
projection of rank $d$, with the topology of pointwise norm convergence, such homomorphisms correspond
bijectively to continuous maps $h_\varphi\colon X\to  E_{d,Y}$ given by $h_\varphi(x)(f)=\varphi(f)(x)$.
When we specialize to $X = S^{2k}$ and $Y = S^{2n}$, the map $h_{\varphi} \colon S^{2k} \to E_{d,S^{2n}}$ defines a class in the homotopy group $\pi_{2k} ( E_{d,S^{2n}} )$. Understanding such maps up to homotopy means computing these homotopy groups. This is hopeless using the current techniques in algebraic topology,  as not even all the homotopy groups of spheres are known. However---although this doesn't appear explicitly in our techniques---we can show that if $\varphi_*$ is injective on $K_0$, then the class of $h_{\varphi}$ in $\pi_{2k} ( E_{d,S^{2n}} )$ is non-torsion. This means that it can be detected using techniques of rational homotopy theory, which make the problem much more accessible.
Indeed, our methods, which use indecomposability of elements in the rational cohomology ring, are informed by techniques of rational homotopy theory. To illustrate this in a very simple case, we know that $H^4(S^2 \times S^2) \cong H^4(S^4) \cong \Z$, but there is no map $S^4 \to S^2 \times S^2$ which is non-zero on $H^4$. The easiest way to see this is to note that $\pi_4(S^2 \times S^2) \cong \pi_4(S^2) \times \pi_4(S^2) \cong \Z_2 \times \Z_2$, while if there existed a continuous map that pulled back a non-zero element of $H^4(S^2 \times S^2)$ to a non-zero element of $H^4(S^4)$, then it would have to be non-torsion in $\pi_4$. However, we can also see this by considering the structure of the graded cohomology rings of these spaces. Let $\xi_1$ and $\xi_2$ be the generators of $H^2(S^2 \times S^2)$ gotten by pulling back a generator of $H^2(S^2)$ along the two coordinate projection maps. Then $\eta = \xi_1 \xi_2$ is a generator of $H^4(S^2 \times S^2)$.
If $f \colon S^4 \to S^2 \times S^2$ is continuous, then $f^*(\eta) = f^*(\xi_1) f^*(\xi_2) = 0$, as $H^2(S^4) = 0$. 

The space $E_{d,Y}$ is not compact, so there are some technical subtleties involving representable $K$-theory which are needed to handle it; we gloss over these in the introduction. It turns out that to address our problem, it is enough to consider rational $K$-theory, so the Chern character converts our problem to one in rational cohomology, where we can exploit the $\Z$-grading. An element of a graded $\Q$-algebra is called \emph{decomposable} if it is in the linear span of products of elements of strictly lower degree; otherwise it is \emph{indecomposable}. The image of a decomposable element under a graded ring homomorphism is never indecomposable. The space $E_{d,Y}$ is the quotient of a product of copies of $Y$ and an infinite flag manifold by a somewhat involved equivalence relation. We can nonetheless compute its rational cohomology ring. When $Y = S^{2n}$, for most choices of $k$, all of  $\vH^{2k}(E_{d,Y};\Q)$ is decomposable. In these cases, no map $h \colon S^{2k} \to E_{d,S^{2n}}$ can be nonzero on rational cohomology of degree $2k$. 

We briefly describe possible connections between \cref{Sec_remarks_and_problems}, in which we discuss possible generalizations of our results to approximate homomorphisms, and  recent work on approximate representations.
In \cite{DadarlatQuasiRepAlmostFlat}, D{\u{a}}d{\u{a}}rlat shows that sufficiently multiplicative
unital completely positive maps into matrix algebras can carry $K$-theoretic information that 
cannot be seen via finite-dimensional representations.  In
\cite[Definition~2.1 and Section~2]{DadarlatQuasiRepAlmostFlat}, he defines
a partially defined push-forward of
$K_0$-classes by approximate homomorphisms. For a large class of groups,
 completely positive asymptotic representations of $C^*(G)$ realize
prescribed homomorphisms from $K_0(C^*(G))$ to $\Z$ (\cite[Theorem~1.1]{DadarlatQuasiRepAlmostFlat}).  In subsequent work, D{\u{a}}d{\u{a}}rlat uses almost-flat $K$-theory classes which are not flat as obstructions to
matricial stability; see \cite[Theorems~1.1 and~1.2]{DadarlatMatricialStability}.
Enders and Shulman's characterization of matricial semiprojectivity for
commutative C*-algebras gives further information on when
approximate finite-dimensional representations are  close to genuine ones.
For finite-dimensional compact metrizable $X$, $C(X)$ is matricially
semiprojective if and only if $\dim(X)\leq 2$ and $H^2(X;\Q)=0$;
see \cite[Theorem~4.1]{EndersShulmanMatricial}. We discuss related open problems in \cref{Sec_remarks_and_problems}. 

The paper is organized as follows. In \cref{Sec_5904_Prelim} we fix notation and collect some facts on cohomology. In \cref{Sec_5904_Gen} we develop the general framework used for our obstructions. In \cref{Sec_5904_General_rank}, we analyze the rational cohomology rings of the spaces of homomorphisms, and use them to obtain our main theorem, \cref{thm:rank-d-obstruction}, which settles Blackadar's problem. We conclude with a few remarks and open problems in \cref{Sec_remarks_and_problems}.

We thank Andrew Toms, Ben Elias, Robert Lipshitz, Nicholas Proudfoot and especially Dev Sinha for helpful conversations and pointers. ChatGPT was used to help simplify and generalize one of the technical steps in our cohomology computations, to search for references, to suggest draft language for a few passages, and to proofread.

\section{Preliminaries}\label{Sec_5904_Prelim}

\indent
We collect here a number of results on cohomology which are needed for the proof of our main theorem.

\begin{ntn}
	For a C*-algebra $A$, we let $P(A)$ denote the space of projections in $A$, with the norm topology induced from $A$.
\end{ntn}

\begin{ntn}\label{ntn:M_infty}
	We write $M_\infty=\bigcup_{n=1}^{\infty}M_n$, equipped with the direct limit topology. 
\end{ntn}

The following result is used to compute the rational cohomology of quotients
by finite group actions.  It shows that, after inverting the order of the
finite group, the \v{C}ech cohomology of the quotient is the fixed point
subring of the cohomology of the original space.  This is false in general for
cohomology with integer coefficients or with $\Z_p$ coefficients.
The theorem can be found in \cite{Bredon}. Theorem~2.4 in Chapter III of \cite{Bredon} has the statement for homology for simplicial complexes, the discussion on the next page says that it applies to cohomology as well, and Theorem~7.2 in Chapter III of \cite{Bredon} extends this to  paracompact spaces. A more general version for sheaf cohomology can be deduced from \cite[Section 5]{grothendieck}.

\begin{thm}\label{thm:cohomology-quotient}
Let $X$ be a paracompact Hausdorff space, let $G$ be a finite group acting on
$X$, and let $F$ be a field whose characteristic does not divide $\card(G)$.
Then the quotient map $X\to X/G$ induces a natural isomorphism
\[
        \vH^*(X/G;F) \cong \vH^*(X;F)^G.
\]
\end{thm}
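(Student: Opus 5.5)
The plan is to use the transfer in sheaf cohomology, which the paper already points to via \cite{Bredon} and \cite{grothendieck}. Write $\pi\colon X\to X/G$ for the quotient map. Since $G$ is finite and $X$ is paracompact Hausdorff, $\pi$ is closed with finite fibers and $X/G$ is paracompact Hausdorff, so \v{C}ech cohomology agrees with sheaf cohomology on both spaces. Because $\pi$ is finite (proper with discrete fibers), $\pi_*$ is exact, and the Leray spectral sequence collapses to give
\[
\vH^*(X;F)\cong H^*(X/G;\pi_*F_X).
\]
The group $G$ acts on the sheaf $\pi_*F_X$ over $X/G$.

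The first step is to identify the invariant subsheaf $(\pi_*F_X)^G$ with the constant sheaf $F_{X/G}$. This is checked on stalks. At an orbit $Gx$, the stalk of $\pi_*F_X$ is $\bigoplus_{y\in Gx}F$, which is $F$-valued functions on the orbit, with $G$ permuting the summands. Its invariants are the constant functions, which form a copy of $F$. To see that these stalk identifications glue to a sheaf isomorphism, use small $G_x$-stable slice-type neighborhoods: $G$ is finite and $X$ is Hausdorff, so each $x$ has a neighborhood $U$ with $gU\cap U=\emptyset$ for $g\notin G_x$ and $gU=U$ for $g\in G_x$.

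The second step, and the heart of the argument, is the averaging operator
\[
\Avg=\frac1{\card(G)}\sum_{g\in G}g,
\]
which makes sense because $\card(G)$ is invertible in $F$. It is an idempotent sheaf endomorphism of $\pi_*F_X$ whose image is $(\pi_*F_X)^G$. Hence $F_{X/G}$ is a direct summand of $\pi_*F_X$, and the splitting commutes with the $G$-action. Applying the additive functor $H^*(X/G;-)$, we get that $H^*(X/G;F_{X/G})$ is the image of the idempotent $\Avg$ acting on $H^*(X/G;\pi_*F_X)\cong\vH^*(X;F)$. That image is exactly the $G$-invariants $\vH^*(X;F)^G$.

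The last step is to check that the resulting isomorphism is the one induced by $\pi^*$ and that it is natural. The unit map $F_{X/G}\to\pi_*\pi^*F_{X/G}=\pi_*F_X$ induces $\pi^*$ on cohomology, and it is precisely the inclusion of the invariant summand. Naturality follows because every construction used is functorial in equivariant maps.

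I expect the main obstacle to be the point-set input: that $\pi$ is proper with finite fibers, so that $\pi_*$ is exact and $R^q\pi_*=0$ for $q>0$, together with the paracompactness of $X/G$ needed to compare \v{C}ech and sheaf cohomology. I would handle this by citing the corresponding facts in Bredon's sheaf theory, namely the Vietoris–Begle and proper base change statements for finite maps.
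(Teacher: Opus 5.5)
Your argument is correct, but it is not the route the paper relies on. The paper gives no proof of its own and cites \cite{Bredon}, where the statement is proved for simplicial complexes via a transfer and then extended to paracompact spaces (Chapter~III, Theorems~2.4 and~7.2); \cite{grothendieck} is mentioned only as a source for a more general sheaf-theoretic version. What you wrote is essentially that sheaf-theoretic version. Its inputs are:
\begin{itemize}
\item $\pi$ is closed with finite fibers, so $(\pi_*\mathcal{F})_{Gx}=\bigoplus_{x'\in Gx}\mathcal{F}_{x'}$, $\pi_*$ is exact, and $\vH^*(X;F)\cong H^*(X/G;\pi_*F_X)$;
\item $X/G$, being a closed continuous image, is again paracompact Hausdorff, so \v{C}ech and sheaf cohomology agree on both sides;
\item $\Avg$ splits off $(\pi_*F_X)^G\cong F_{X/G}$ as a direct summand.
\end{itemize}
Compared with the cited route, yours works directly on the paracompact space, with no reduction to the simplicial case. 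Without change it gives $H^*(X/G;(\pi_*\mathcal{A})^G)\cong H^*(X;\mathcal{A})^G$ for any $G$-sheaf $\mathcal{A}$ of $F$-vector spaces. It also makes the identification with $\pi^*$, and naturality, transparent. The price is the sheaf-theoretic machinery.

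Two small points. First, the slice neighborhoods are unnecessary. The unit $F_{X/G}\to\pi_*F_X$ is a global morphism that is stalkwise the diagonal onto the invariants. Alternatively, $G$-invariant locally constant functions on $\pi^{-1}(V)$ descend to locally constant functions on $V$ because $\pi$ is open. Second, you should say explicitly that, under $\vH^*(X;F)\cong H^*(X/G;\pi_*F_X)$, the sheaf automorphism $s\mapsto s\circ g$ of $\pi_*F_X$ induces $g^*$. This is what makes the image of $H^*(\Avg)$ equal to $\vH^*(X;F)^G$.
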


\begin{dfn}\label{dfn:indecomposable-elements}
By a \emph{graded ring} we mean a unital ring $R$ equipped with a decomposition as a direct sum of
abelian groups
\[
        R=\bigoplus_{n\in\Nz} R_n,
\]
with $R_kR_m\subseteq R_{k+m}$ for all $k,m\in\Nz$, and such that
$1\in R_0$.  If $R$ and $S$ are two graded rings, a \emph{graded homomorphism}
$\pi\colon R\to S$ is a homomorphism which satisfies $\pi(R_n)\subseteq S_n$
for all $n$.

For $n>0$, an element $a\in R_n$ is \emph{indecomposable} if it is not in the
subring generated by $R_0,R_1,\ldots,R_{n-1}$.  Equivalently, if
\[
        R_+=\bigoplus_{n=1}^{\infty} R_n,
\]
then $a$ is indecomposable if and only if its image in
$R_+/(R_+\cdot R_+)$ is nonzero.  When $R$ is an algebra over a field $F$, we
say that the indecomposables of degree $n$ are generated by a set
$T\subset R_n$ if the images of the elements of $T$ generate
$R_n/(R_+\cdot R_+)$ as an $F$-vector space.
\end{dfn}

The following lemma is immediate.
\begin{lem}\label{lemma:indecomposables_in_rings}
Let $R$ and $S$ be graded rings, and let $\pi\colon R\to S$ be a graded
homomorphism. Let $n>0$ and let $a\in R_n$. If $\pi(a)$ is indecomposable, then
so is $a$.
\end{lem}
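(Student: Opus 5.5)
We argue by contraposition: if $a$ is decomposable, then so is $\pi(a)$. Equivalently, $\pi$ maps $R_+\cdot R_+$ into $S_+\cdot S_+$, and so induces a well-defined map $R_+/(R_+\cdot R_+)\to S_+/(S_+\cdot S_+)$ which sends the image of $a$ to the image of $\pi(a)$.

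First, since $\pi$ is graded, $\pi(R_m)\subseteq S_m$ for every $m$. Taking direct sums over $m\geq 1$ gives $\pi(R_+)\subseteq S_+$. Next, $R_+\cdot R_+$ consists of finite sums $\sum_j x_j y_j$ with $x_j,y_j\in R_+$. Since $\pi$ is a ring homomorphism,
\[
\pi\Bigl(\sum_j x_j y_j\Bigr)=\sum_j \pi(x_j)\pi(y_j)\in S_+\cdot S_+ .
\]
Hence $\pi(R_+\cdot R_+)\subseteq S_+\cdot S_+$.

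Now suppose $a\in R_n$ with $n>0$ is decomposable, that is, $a\in R_+\cdot R_+$ by the equivalent formulation in \cref{dfn:indecomposable-elements}. Then $\pi(a)\in S_+\cdot S_+$, so the image of $\pi(a)$ in $S_+/(S_+\cdot S_+)$ is zero. Thus $\pi(a)$ is decomposable, which is the contrapositive of the lemma.

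If one prefers the first formulation (membership in the subring generated by $R_0,\ldots,R_{n-1}$), the argument is the same. The image under $\pi$ of the subring generated by $R_0,\ldots,R_{n-1}$ is the subring generated by $\pi(R_0),\ldots,\pi(R_{n-1})$. This is contained in the subring generated by $S_0,\ldots,S_{n-1}$, since $\pi(R_m)\subseteq S_m$.

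The only point requiring any care is that the two formulations in \cref{dfn:indecomposable-elements} agree. This follows from the definition of the grading, which gives $R_+\cdot R_+\cap R_n=\sum_{0<i<n}R_iR_{n-i}$, but the lemma does not depend on it since either formulation works directly.
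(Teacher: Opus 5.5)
Your proof is correct and is exactly the routine argument the paper has in mind; the paper gives no proof, calling the lemma immediate. A graded homomorphism carries $R_+\cdot R_+$ into $S_+\cdot S_+$ (equivalently, it carries the subring generated by $R_0,\ldots,R_{n-1}$ into the subring generated by $S_0,\ldots,S_{n-1}$), so decomposable elements map to decomposable ones, and your contrapositive is the lemma.
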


We apply this to get the following straightforward corollary.
\begin{cor}\label{cor:indecomposable_for_maps_from_sphere}
Let $X$ be a compact Hausdorff space, and let
$\eta\in \vH^n(X;\Q)$ with $n>0$. If there exists a continuous map
$h\colon S^n\to X$ such that $h^*\eta\neq 0$, then $\eta$ is indecomposable.
\end{cor}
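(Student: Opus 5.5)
The plan is to apply \cref{lemma:indecomposables_in_rings} to the graded ring homomorphism
\[
h^*\colon \vH^*(X;\Q)\to \vH^*(S^n;\Q)
\]
and check that every nonzero element of $\vH^n(S^n;\Q)$ is indecomposable. Cup product makes rational \v{C}ech cohomology a graded ring: it is unital with $1\in \vH^0$, and it is graded in the sense of \cref{dfn:indecomposable-elements} because the cup product takes $\vH^k\otimes \vH^m$ to $\vH^{k+m}$. Graded commutativity plays no role. Since $h$ is continuous, $h^*$ is a unital ring homomorphism which preserves degree, so it is a graded homomorphism. Both $X$ and $S^n$ are compact Hausdorff, so \v{C}ech cohomology is the appropriate theory. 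For $S^n$ it agrees with singular cohomology, because $S^n$ is a finite CW complex.

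Next we compute in the target. We have $\vH^0(S^n;\Q)\cong\Q$, spanned by $1$, and $\vH^n(S^n;\Q)\cong\Q$. All groups $\vH^j(S^n;\Q)$ with $0<j<n$ vanish, and so do those with $j>n$. Hence $S_+=\vH^n(S^n;\Q)$, and $S_+\cdot S_+\subseteq \vH^{2n}(S^n;\Q)=0$ because $n>0$. So $S_+/(S_+\cdot S_+)$ in degree $n$ is all of $\vH^n(S^n;\Q)$, and every nonzero element there is indecomposable. We can also argue directly: the subring generated by $\vH^0,\ldots,\vH^{n-1}$ is $\Q\cdot 1$, which meets $\vH^n$ only in $0$.

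Finally, by hypothesis $h^*\eta\neq 0$ and $h^*\eta\in \vH^n(S^n;\Q)$, so $h^*\eta$ is indecomposable. By \cref{lemma:indecomposables_in_rings}, $\eta$ is indecomposable. There is no real obstacle here. The only points needing a moment's care are that $h^*$ respects products and grading in \v{C}ech cohomology, and that the definition of indecomposability counts $R_0$ among the generators. That second point is handled by $\vH^0(S^n;\Q)=\Q$ contributing nothing in degree $n$.
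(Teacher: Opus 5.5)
Your proof is correct and follows the paper's argument: the positive-degree part of $\vH^*(S^n;\Q)$ is concentrated in degree $n$ and has square zero, so every nonzero class in $\vH^n(S^n;\Q)$ is indecomposable. Applying \cref{lemma:indecomposables_in_rings} to the graded homomorphism $h^*$ then finishes the proof, and your added checks about $\vH^0(S^n;\Q)=\Q$ and about \v{C}ech versus singular cohomology are accurate supplementary details.
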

\begin{proof}
The positive degree part of $\vH^*(S^n;\Q)$ has square zero in degree $n$.
Thus every nonzero element of $\vH^n(S^n;\Q)$ is indecomposable, and the result
follows from \cref{lemma:indecomposables_in_rings}.
\end{proof}

We record the following elementary observation. It is well known and the proof 
is straightforward, so it is omitted.

\begin{lem}\label{lem:transitive-components}
	Let $G$ be a finite group acting on a topological space $Y$. Suppose $Y$ decomposes as a disjoint union of closed and open sets $Y=\coprod_{i\in I}Y_i$. Suppose $G$ acts transitively on $I$, and the quotient map $Y \to I$ is $G$-equivariant.   Fix $i_0\in I$, and set $H=\Stab_G(i_0)$.  Then the inclusion
	$Y_{i_0}\subset Y$ induces a homeomorphism
	\[
	Y_{i_0}/H\cong Y/G.
	\]
\end{lem}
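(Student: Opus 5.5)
The plan is to construct the map $Y_{i_0}\to Y/G$ explicitly and show it is a continuous bijection that is also open. Let $q\colon Y\to Y/G$ be the quotient map, and let $p\colon Y\to I$ be the map sending $Y_i$ to $i$. The composite $Y_{i_0}\hookrightarrow Y\xrightarrow{q} Y/G$ is continuous. For $h\in H$ we have $hY_{i_0}=Y_{i_0}$, because $p(hy)=h\,p(y)=h i_0=i_0$ by equivariance. So the composite is constant on $H$-orbits and descends to a continuous map
\[
\bar\iota\colon Y_{i_0}/H\to Y/G .
\]

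Next I check bijectivity. For surjectivity, take $y\in Y_i$. By transitivity there is $g\in G$ with $g i=i_0$, and then $gy\in Y_{i_0}$ has the same $G$-orbit as $y$. For injectivity, suppose $y,y'\in Y_{i_0}$ and $y'=gy$ for some $g\in G$. Then $g i_0=g\,p(y)=p(gy)=i_0$, so $g\in H$, and $y,y'$ lie in the same $H$-orbit.

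The main step is to show that $\bar\iota$ is a homeomorphism, i.e.\ that it is open, using that each $Y_i$ is open. Let $U\subseteq Y_{i_0}/H$ be open, and let $V$ be its preimage in $Y_{i_0}$. Then $V$ is open in $Y_{i_0}$, hence open in $Y$, and it is $H$-invariant. Now $q^{-1}(\bar\iota(U))=GV=\bigcup_{g\in G} gV$. This set is open because each $g$ acts as a homeomorphism of $Y$. Since $Y/G$ carries the quotient topology, $\bar\iota(U)$ is open.

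The only subtlety is making sure that the quotient topology on $Y_{i_0}/H$ agrees with the topology we use when checking openness. This works because $V$ is open in $Y_{i_0}$ exactly when it is open in $Y$, and that holds since $Y_{i_0}$ is open in $Y$. Closedness of the pieces is not actually needed, and no finiteness is used beyond the fact that $G$ acts by homeomorphisms.
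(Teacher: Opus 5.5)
Your proof is correct: $\bar\iota$ is a well-defined continuous bijection, and it is open because $q^{-1}(\bar\iota(U))=GV$ is a union of translates of a set that is open in $Y$. The paper omits this proof as well known and straightforward, and your argument is the standard verification it leaves to the reader; your remark that closedness of the $Y_i$ and finiteness of $G$ are not needed is also right.
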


For a finite sequence of integers $0=n_0<n_1<n_2<\cdots<n_k$, we
denote by $\fl(n_1,n_2,\ldots,n_k)$ the complex flag manifold consisting of
chains of subspaces
\[
        V_1\subset V_2\subset\cdots\subset V_k\subseteq \C^{n_k}
\]
such that $\dim(V_j)=n_j$.

This is a compact manifold.  It is convenient to describe it
using projections. We identify the flag manifold with tuples of mutually
orthogonal projections summing to the identity:
\[
\begin{aligned}
	\fl(n_1,n_2,\ldots,n_k)
	=\Bigl\{\, &(p_1,p_2,\ldots,p_k)\in P(M_{n_k})^k
	\ \Bigm|\ \sum_{j=1}^k p_j=1_{n_k},\\
	&\text{and } \rank(p_j)=n_j-n_{j-1}
	\text{ for } j=1,2,\ldots,k \Bigr\}.
\end{aligned}
\]
Here, the subspace $V_m$ in the flag corresponds to the range of the sum
$\sum_{j=1}^m p_j$.

When we allow the ambient space to be infinite dimensional, we need to specify
the topology more carefully. Let $H$ be a separable infinite-dimensional Hilbert
space. We define the infinite flag space corresponding to finite ranks
$n_1<\cdots<n_k$ inside $H$ as
\[
\begin{aligned}
	\fl(n_1,n_2,\ldots,n_k,\infty)
	=\Bigl\{\, &(p_1,p_2,\ldots,p_k)\in P(K(H))^k
	\ \Bigm|\ p_l p_j=0\text{ for }l \neq j,\\
	&\text{and } \rank(p_j)=n_j-n_{j-1}
	\text{ for } j=1,2,\ldots,k \Bigr\} ,
\end{aligned}
\]
with the norm topology inherited from $K(H)^k$.  Note that here $\sum_{j=1}^k p_j\neq 1$.

The standard algebraic topology model, using direct limits, is
\[
\begin{aligned}
	\fla(n_1,n_2,\ldots,n_k,\infty)
	=\Bigl\{\, &(p_1,p_2,\ldots,p_k)\in P(M_{\infty})^k
	\ \Bigm|\ p_l p_j=0\text{ for }l \neq j,\\
	&\text{and } \rank(p_j)=n_j-n_{j-1}
	\text{ for } j=1,2,\ldots,k \Bigr\}.
\end{aligned}
\]
Neither space is a
finite-dimensional manifold; we use the term ``flag manifold'' by abuse of
terminology. The standard inclusion $M_\infty\subset K(H)$ induces an inclusion
\[
        \iota\colon \fla(n_1,n_2,\ldots,n_k,\infty)\to
        \fl(n_1,n_2,\ldots,n_k,\infty).
\]

\begin{lem}\label{lem:alg_flag_manifold}
The standard inclusion
\[
        \iota\colon \fla(n_1,n_2,\ldots,n_k,\infty)\to
        \fl(n_1,n_2,\ldots,n_k,\infty)
\]
is a weak homotopy equivalence.
\end{lem}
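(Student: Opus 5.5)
Since both spaces are (or have the weak homotopy type of) CW-type objects only in a loose sense, I would verify the definition directly. That is, I would show that for every compact pair $(D^m, S^{m-1})$, every map $f\colon D^m \to \fl(n_1,\ldots,n_k,\infty)$ with $f(S^{m-1}) \subseteq \iota(\fla(\ldots))$ is homotopic rel $S^{m-1}$ to a map into $\fla(\ldots)$. The key preliminary fact is that the topology on $\fla$ is the direct limit topology of the finite flag spaces $\fla_N$ (tuples in $P(M_N)^k$). Since each $\fla_N$ is a compact manifold and the inclusions are closed, any compact set in $\fla$ lies in some $\fla_N$. The same holds for the norm topology restricted to $\fla_N$, since on $M_N$ the topologies agree. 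Hence $f|_{S^{m-1}}$ lands in a single $\fla_N$, carrying the norm topology there.

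Next, I would compress $f$ into finite matrices. Let $e_M\in K(H)$ be the projection onto the first $M$ basis vectors, so $e_M\to 1$ strongly. For compact $C=f(D^m)$ of finite-rank projections with bounded rank, $\|e_M p - p\| \to 0$ uniformly on $C$. This uses compactness: $C$ is a compact set in $K(H)$, and $e_M a\to a$ uniformly on norm-compact sets of compact operators. For each $x\in D^m$, I would replace $f(x)=(p_1,\ldots,p_k)$ by $(e_Mp_je_M)$. For $M$ large, these are uniformly close to a tuple of mutually orthogonal projections of the correct ranks. Applying a continuous "orthogonalization" gives a tuple in $P(M_M)^k$ of the right ranks and mutually orthogonal: take the spectral projection $\chi_{[1/2,1]}$ of each compression, then correct orthogonality by the standard perturbation, for instance by successively cutting down using polar decomposition. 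This gives a continuous retraction $R$ from a norm neighborhood $U$ of the flag space to the flag space. If the retraction is built from continuous functional calculus and algebraic operations inside $e_MK(H)e_M\cong M_M$, it lands in $\fla_M$. On tuples already in $\fla_M$, it is the identity.

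For the homotopy, I would take the straight-line path $t\mapsto (1-t)p_j + t\, e_Mp_je_M$ at each point. This path stays within the neighborhood $U$, since all the operators are within a small distance of $f(x)$ uniformly. I would then compose with $R$. To keep the homotopy fixed on $S^{m-1}$, I would choose $M\ge N$, so that $e_M$ acts as the identity on $f(S^{m-1})\subseteq P(M_N)^k$; the path is then constant there, and $R$ fixes it. Thus $f$ is homotopic rel boundary to a map into $\fla_M$. Continuity into $\fla$ with its direct limit topology is automatic, since the map factors through the compact manifold $\fla_M$. This gives injectivity and surjectivity on all $\pi_m$ at all basepoints, using the relative criterion.

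I expect the main obstacle to be producing $R$ canonically and continuously, so that it is the identity on genuine flags and preserves being in $M_M$. The plan is to use the standard fact that the space of $k$-tuples of orthogonal projections is a neighborhood retract via functional calculus on the self-adjoint element $\sum_j j\,a_j$. Taking spectral projections of this single operator near the eigenvalues $1,\ldots,k$, with gaps bounded away from zero for small perturbations, yields the $p_j$ continuously. Ranks are preserved by continuity together with connectedness of the path, and the construction is the identity on exact flags.
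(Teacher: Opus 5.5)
Your proposal is correct and is essentially the paper's proof: you compress $\sum_j j p_j$ by a finite-rank coordinate projection $e_M$ (with $M$ large enough to fix the boundary), join by the straight-line path, and recover an honest flag by functional calculus, with ranks constant because the spectrum never meets the half-integers. The paper's nearest-integer function is the same as your spectral projections near $1,\ldots,k$, and the per-projection orthogonalization you sketch in the middle is unnecessary once you use the single operator $\sum_j j a_j$; the paper states the relative approximation for an arbitrary compact pair $(Z,A)$ (using $S^r\times[0,1]$ for injectivity), and your construction gives that verbatim.
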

\begin{proof}
We prove the relative approximation statement needed for weak homotopy groups.
Let $X$ be a compact Hausdorff space, let $A\subset X$ be closed, and let
$f\colon X\to \fl(n_1,\ldots,n_k,\infty)$ be continuous. Assume that
$f(A)$ is contained in the algebraic model and, as a compact subset of the
direct-limit space, is contained in one finite stage. We show that $f$ is
homotopic, relative to $A$, to a map with image in the algebraic model. The
case $X = S^r$, $x_0 \in S^r$ some fixed basepoint and $A=\{x_0\}$ gives surjectivity on homotopy groups, and the case
$X = S^r \times [0,1]$ and $A=( S^r\times\{0,1\} ) \cup ( \{x_0\} \times [0,1] )$ gives injectivity.

Map a $k$-tuple of mutually orthogonal projections $(p_1,\ldots,p_k)$ to
$\sum_{j=1}^k jp_j$.  This identifies
$\fl(n_1,\ldots,n_k,\infty)$ with the set of positive compact operators whose
spectrum is $\{0,1,2,\ldots,k\}$ and such that the dimension of the eigenspace
of $j$ is $n_j-n_{j-1}$ for $j=1,2,\ldots,k$.

Let $q_m$ be the orthogonal projection onto the span of the first $m$ standard
basis vectors of $H$. Since $X$ is compact,
\[
        \|q_m f(x)q_m-f(x)\|\longrightarrow 0
\]
uniformly on $X$. Choose $m$ large enough that this norm is less than $1/4$ for
all $x\in X$, and also large enough that $q_m f(x)q_m=f(x)$ for every $x\in A$.
Let
\[
        g\colon \R\setminus\{r+1/2 \mid r\in\Z\}\to \Z
\]
be the continuous function which sends a point to the nearest integer.  For
$t\in[0,1]$, set
\[
        a_t(x)=(1-t)f(x)+tq_mf(x)q_m.
\]
The spectrum of $a_t(x)$ does not contain any point $r+1/2$; hence we can define
$h_t(x)=g(a_t(x))$ using functional calculus.  Then $h_0=f$, the
homotopy is fixed on $A$, and $h_1(X)$ is contained in $q_mK(H)q_m\cong M_m$.
Since the spectral projections do not cross the gaps between adjacent integers,
their ranks are constant along the homotopy. Thus $h_t$ stays inside the flag
space throughout, and $h_1$ has image in the algebraic model.
\end{proof}

\begin{ntn}
	Let $E$ be a vector bundle over a topological space $X$. We denote by $c_n(E) \in \vH^{2n}(X)$ the $n$-th Chern class of $E$.
\end{ntn}

\begin{thm}\label{thm:stable-flag-cohomology}
The ranges of the projections $p_1,\ldots,p_k$ define canonical vector bundles
$E_1,\ldots,E_k$ over $\fl(n_1,\ldots,n_k,\infty)$. Let
$r_j=n_j-n_{j-1}=\rank(E_j)$. Then
\[
\vH^*(\fl(n_1,\ldots,n_k,\infty);\Q)
     \cong \Q[c_l(E_j):1\leq j\leq k \text{ and } 1\leq l\leq r_j],
\]
where $\deg(c_l(E_j))=2l$.
\end{thm}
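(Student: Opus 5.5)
By \cref{lem:alg_flag_manifold}, $\iota$ is a weak homotopy equivalence, so I will compute the rational cohomology of the algebraic model $\fla(n_1,\ldots,n_k,\infty)$ and then transfer it to $\fl(n_1,\ldots,n_k,\infty)$. The algebraic model is a CW complex: it is the direct limit of the finite flag manifolds $\fl(n_1,\ldots,n_k,N)$. Here $\fl(n_1,\ldots,n_k,N)$ consists of $k$-tuples of mutually orthogonal projections in $M_N$ of the given ranks, equivalently the flag manifold $U(N)/(U(r_1)\times\cdots\times U(r_k)\times U(N-n_k))$.

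The first step is to compute the cohomology of the finite stages. For each $N$, iterated Grassmann bundles give a fibration description of $\fl(n_1,\ldots,n_k,N)$. Alternatively, one can use the classical Borel presentation: $H^*$ is $\Q[c(E_1),\ldots,c(E_k),c(E_{k+1})]$ modulo the relation $\prod_j c(E_j)=1$, where $E_{k+1}$ is the complementary bundle of rank $N-n_k$. I would rather avoid the relation bookkeeping and argue as follows. The map $\fla(n_1,\ldots,n_k,\infty)\to BU(r_1)\times\cdots\times BU(r_k)$ classifying $(E_1,\ldots,E_k)$ is a homotopy equivalence. Indeed, $\fla$ is the quotient of the contractible Stiefel-type space of orthonormal $n_k$-frames in $\C^\infty$ by the free action of $U(r_1)\times\cdots\times U(r_k)$, hence it is a model for $B(U(r_1)\times\cdots\times U(r_k))$. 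Contractibility of the infinite Stiefel manifold $V_{n_k}(\C^\infty)$ is standard, since $V_{n_k}(\C^N)$ is $(2(N-n_k))$-connected. The Künneth theorem, together with $H^*(BU(r);\Q)=\Q[c_1,\ldots,c_r]$, then gives the polynomial ring with generators $c_l(E_j)$, where the $E_j$ are pulled back from the universal bundles.

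The main obstacle is passing from singular cohomology of the CW model to \v{C}ech cohomology of the non-locally-compact space $\fl(n_1,\ldots,n_k,\infty)$. My plan is to note that $\fl(n_1,\ldots,n_k,\infty)$ is a metrizable Banach manifold, since it is a homogeneous space of the unitary group of $\C+K(H)$ and locally modeled on a Hilbert space. Metrizable manifolds are paracompact and locally contractible, so \v{C}ech cohomology agrees with singular cohomology for them. By Palais' theorem such spaces have the homotopy type of CW complexes, so the weak equivalence $\iota$ of \cref{lem:alg_flag_manifold} is a genuine homotopy equivalence and induces an isomorphism on singular cohomology. On the CW side, \v{C}ech and singular cohomology also agree, since CW complexes are paracompact and locally contractible.

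Finally, naturality of Chern classes under $\iota^*$ is immediate: the canonical bundles $E_j$ on $\fl$ restrict to the corresponding bundles on $\fla$. This identifies the ring with generators $c_l(E_j)$ in degree $2l$, as claimed.
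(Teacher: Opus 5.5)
Your argument is correct, but it runs in the opposite direction from the paper's.

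\textbf{The paper's route.} The paper never passes through the algebraic model. It proves directly that the norm-topology Stiefel space $V_{n_k}(H)$ is contractible: it pushes frames into $L^2([1,\infty))$ with the shift semigroup, then rotates them onto a fixed frame in $L^2([0,1])$. It then identifies $\fl(n_1,\ldots,n_k,\infty)$ with $V_{n_k}(H)/G$, where $G=U(r_1)\times\cdots\times U(r_k)$, and concludes that this space is a model for $BG\simeq\prod_j BU(r_j)$. Finally it applies K\"unneth and the agreement of \v{C}ech and singular cohomology for spaces of CW homotopy type. The CW homotopy type of the norm-topology flag space is thus a by-product of the proof. \cref{lem:alg_flag_manifold} is only used afterwards, in the subsequent remark, to transfer the result to $\fla(n_1,\ldots,n_k,\infty)$.

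\textbf{Your route.} You do the $BG$ computation on the CW model $\fla(n_1,\ldots,n_k,\infty)$, where contractibility of $V_{n_k}(\C^\infty)$ is textbook, via the connectivity of the finite Stiefel manifolds. You then move to the norm topology using \cref{lem:alg_flag_manifold}. The \v{C}ech side is handled by the Hilbert-manifold structure of $\fl(n_1,\ldots,n_k,\infty)$, which comes from graph charts around finite-rank projections and gives paracompactness and local contractibility, hence \v{C}ech $=$ singular.

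\textbf{Comparison.} Your route uses only standard facts about the classical CW spaces. It pays for this with the Banach-manifold input on the norm-topology space. It also yields the result for $\fla$ first, which is the version actually used later for $P_d^{\mathrm{alg}}$. The paper's route needs no manifold theory, at the cost of the explicit shift-semigroup contraction. It also relies on local triviality of $V_{n_k}(H)\to V_{n_k}(H)/G$ in the norm topology, which it takes as standard.

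\textbf{A small simplification.} You do not need Palais' theorem. A weak homotopy equivalence already induces isomorphisms on singular cohomology, so the only place the manifold structure is needed is the \v{C}ech--singular comparison on $\fl(n_1,\ldots,n_k,\infty)$.
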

\begin{proof}
	Let $m=n_k$. Let $V_m(H)$ denote the
space of orthonormal $m$-frames in $H$, with the norm topology. 
We claim that $V_m(H)$ is contractible. To see that, identify $H$
with $L^2([0,\infty))$, and let $(S_t)_{t\geq 0}$ be the unilateral shift
semigroup.
The maps 
$$
(\xi_1,\xi_2,\ldots,\xi_m)\mapsto (S_t \xi_1,S_t \xi_2,\ldots,S_t \xi_m)
$$ 
for $t \in [0,1]$ define a homotopy in $V_m(H)$ from the identity map to a map whose image is
$V_m(L^2([1,\infty)))$. Now, choose orthonormal vectors $e_1,\ldots,e_m$ in $L^2([0,1])$. For
$(\xi_1,\ldots,\xi_m)\in V_m(L^2([1,\infty)))$ and for $t \in [0,1]$, define
$$
T_t(\xi_1,\ldots,\xi_m)
=
\bigl(
\sqrt{t}\,e_1+\sqrt{1-t}\,\xi_1,
\ldots,
\sqrt{t}\,e_m+\sqrt{1-t}\,\xi_m
\bigr) .
$$
So, $T_t(\xi_1,\ldots,\xi_m)$ is an orthonormal $m$-frame for every
$t$. The map $T_0$ is the inclusion of $V_m(L^2([1,\infty)))$ into $V_m(H)$,
while the map $T_1$ is the constant map with value $(e_1,\ldots,e_m)$. Therefore
$V_m(H)$ is contractible, as claimed.

Set $r_j=n_j-n_{j-1}$ and $G=U(r_1)\times U(r_2) \times \cdots\times U(r_k)$. The group
$G$ acts on $V_m(H)$ blockwise. 
Note that we have
$$
\fl(n_1,\ldots,n_k,\infty) \cong V_{n_k}(H)/G .
$$
The quotient map
$V_{n_k}(H) \to V_{n_k}(H)/G
$
is a principal $G$-bundle. Since $V_{n_k}(H)$ is contractible, this quotient is a model
for $BG$.
Using \cite[Section 16.5]{May_book}, we have
$$
B(U(r_1)\times\cdots\times U(r_k))\cong BU(r_1)\times\cdots\times BU(r_k).
$$
Since the Grassmannian $\Gr(r,\infty)$ is a
model for $BU(r)$, we get
\[
\fl(n_1,\ldots,n_k,\infty)
\simeq \Gr(r_1,\infty)\times\cdots\times\Gr(r_k,\infty).
\]
The rational cohomology of $BU(r)$ is the polynomial algebra on the Chern classes
$c_1,\ldots,c_r$ \cite[Theorem 14.5]{MilnorStasheff}. The K\"unneth formula
then gives the stated polynomial algebra. Here we use the fact that, for spaces of CW
homotopy type, \v{C}ech cohomology agrees with singular cohomology.
\end{proof}

\begin{rmk}
	The proof of \cref{thm:stable-flag-cohomology} shows in particular that
	$\fl(n_1,\ldots,n_k,\infty)$ has the homotopy type of a CW complex. The
	algebraic flag manifold $\fla(n_1,\ldots,n_k,\infty)$ also has CW homotopy
	type, since it is the direct limit of finite dimensional flag manifolds.
	Therefore the weak homotopy equivalence
	$$
	\fla(n_1,\ldots,n_k,\infty) \to
	\fl(n_1,\ldots,n_k,\infty)
	$$
	from \cref{lem:alg_flag_manifold} is a homotopy equivalence by Whitehead's
	theorem. Consequently the conclusion of
	\cref{thm:stable-flag-cohomology} applies to
	$\fla(n_1,\ldots,n_k,\infty)$ as well.
\end{rmk}

\begin{prp}\label{prp:finite-complete-flag-cohomology}
For $s\in\N$, write
\[
        \Fl_s=\fl(1,2,\ldots,s)=U(s)/\T^s.
\]
Let $L_1,\ldots,L_s$ be the tautological line bundles over $\Fl_s$, and set
$z_i=c_1(L_i)\in H^2(\Fl_s;\Q)$. If $e_j$ denotes the $j$-th elementary
symmetric polynomial in $z_1,\ldots,z_s$, then
\begin{equation}\label{eq:finite-flag-cohomology}
        \vH^*(\Fl_s;\Q)
        \cong \Q[z_1,\ldots,z_s]/(e_1,\ldots,e_s) 
        \; \text{  with } \; \deg(z_i)=2.
\end{equation}
\end{prp}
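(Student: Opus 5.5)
The plan is the classical Borel-type computation, which uses only the tools already in this section. We realize $\Fl_s$ as the fiber of a fibration with contractible-up-to-homotopy total space. Concretely, let $\T^s\subset U(s)$ be the diagonal torus. We consider the fibration
\[
\Fl_s = U(s)/\T^s \longrightarrow B\T^s \longrightarrow BU(s),
\]
modeled as
\[
V_s(H)/\T^s \to V_s(H)/U(s).
\]
Here $V_s(H)$ is the contractible frame space from the proof of \cref{thm:stable-flag-cohomology}. By that theorem applied with $r_1=\cdots=r_s=1$, we have
\[
V_s(H)/\T^s=\fl(1,2,\ldots,s,\infty)
\]
with rational cohomology $\Q[z_1,\ldots,z_s]$, where $z_i$ is the first Chern class of the $i$-th tautological line bundle. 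Likewise $V_s(H)/U(s)=\Gr(s,\infty)$ has rational cohomology $\Q[c_1,\ldots,c_s]$. The projection pulls the tautological rank $s$ bundle back to $L_1\oplus\cdots\oplus L_s$. By the Whitney sum formula, $c_j\mapsto e_j(z_1,\ldots,z_s)$.

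The fiber over a point of $\Gr(s,\infty)$ is the space of splittings of a fixed $s$-dimensional space into ordered orthogonal lines, i.e.\ $\Fl_s$. The restrictions of the $L_i$ to the fiber are the tautological line bundles on $\Fl_s$. Hence the restriction map
\[
\Q[z_1,\ldots,z_s]\to \vH^*(\Fl_s;\Q)
\]
sends $z_i\mapsto z_i$ and kills $e_1,\ldots,e_s$, since these are pulled back from the base and the fiber inclusion factors through a point of the base. We therefore get a graded ring homomorphism
\[
\Q[z_1,\ldots,z_s]/(e_1,\ldots,e_s)\to \vH^*(\Fl_s;\Q).
\]

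It remains to show this map is an isomorphism. For that, I would use the Leray--Hirsch theorem (or Serre spectral sequence) over the simply connected base $\Gr(s,\infty)$. First, $\Fl_s$ has cohomology only in even degrees and Euler characteristic $s!$. This follows from its Schubert cell decomposition, or by induction via the $\mathbb{C}P^{s-1}$-bundle $\Fl_s\to\Fl_{s-1}$-type fibration (choose the first line, then a flag in the complement). Since all cohomology of fiber and base is in even degrees, the Serre spectral sequence collapses. So $\vH^*(\Fl_s;\Q)$ is a quotient of $\Q[z]$ by the ideal generated by positive-degree base classes, i.e.\ exactly by $(e_1,\ldots,e_s)$. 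Equivalently, I would check surjectivity (the $z_i$ restricted to the fiber generate, by Leray--Hirsch-type freeness over $\Q[e]$) and compare dimensions. The ring $\Q[z]$ is a free module over the symmetric invariants $\Q[e_1,\ldots,e_s]$ of rank $s!$, with basis the monomials $z^a$, $0\le a_i\le i-1$. Hence the quotient has dimension $s!$, which equals $\dim \vH^*(\Fl_s;\Q)$, and a surjection between spaces of equal finite dimension is an isomorphism.

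The main obstacle is the collapse/surjectivity step, i.e.\ knowing the $z_i$ generate $\vH^*(\Fl_s;\Q)$. I would handle it by induction on $s$ using the fibration
\[
\mathbb{C}P^{s-1}\to \Fl_s\to \Fl_{s-1}
\]
(forget the first line, or rather fiber over the last $s-1$ lines' complement structure). This fibration satisfies Leray--Hirsch with the powers of $z_1$ as fiber generators, which gives both the generation and the count $s!$ directly. Singular and \v{C}ech cohomology agree here since $\Fl_s$ is a compact manifold.
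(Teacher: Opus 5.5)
Your proof is correct, and it takes a genuinely different route from the paper.

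The paper only checks that the relations hold, and then cites Bott--Tu, Proposition~21.17, for the fact that there are no others. The relations come from triviality of $L_1\oplus\cdots\oplus L_s$, which gives $\prod_i(1+z_i)=1$. This is the same observation as your remark that the fiber inclusion factors through a point of $BU(s)$. In Bott--Tu the flag manifold is built as an iterated projective bundle over $\mathbb{C}P^{s-1}$, and the projective bundle formula (Leray--Hirsch) is applied stage by stage. That yields the presentation and a monomial basis using nothing beyond Leray--Hirsch, and it works integrally. The paper still has to remark that Bott--Tu's real-coefficient statement transfers to $\Q$.

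You instead use Borel's fibration $\Fl_s\to B\T^s\to BU(s)$, modeled by $V_s(H)/\T^s\to V_s(H)/U(s)$. Both polynomial rings then come straight from Theorem~\ref{thm:stable-flag-cohomology}, and you work over $\Q$ from the start. Evenness gives collapse of the Serre spectral sequence. Leray--Hirsch then identifies the kernel of restriction to the fiber with the ideal generated by $\vH^{>0}(BU(s);\Q)$, i.e.\ with $(e_1,\ldots,e_s)$. Your route is more conceptual and fits the infinite flag machinery the paper uses later. The cost is the Serre spectral sequence, plus an independent input that $\vH^*(\Fl_s;\Q)$ is concentrated in even degrees with total dimension $s!$ (from Schubert cells or an induction). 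A small wording point: the total space $V_s(H)/\T^s\simeq B\T^s$ is not contractible; only $V_s(H)$ is.

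Your last paragraph needs correcting, although nothing depends on it. There is no fibration $\mathbb{C}P^{s-1}\to\Fl_s\to\Fl_{s-1}$ with powers of $z_1$ as fiber generators.
\begin{itemize}
\item Take $s=3$. A fibration $\mathbb{C}P^2\to\Fl_3\to\Fl_2\cong S^2$ would have a collapsing Serre spectral sequence.
\item The generator of $\vH^2(S^2;\Q)$ would then pull back to a nonzero class of square zero in $\vH^2(\Fl_3;\Q)$.
\item But with $z_3=-z_1-z_2$, the only degree~$4$ relation is $z_1^2+z_1z_2+z_2^2=0$, and $(\alpha z_1+\beta z_2)^2=0$ forces $\alpha=\beta=0$.
\end{itemize}
The fibration you describe in words earlier (choose the first line, then a flag in its complement) is $\Fl_{s-1}\to\Fl_s\to\mathbb{C}P^{s-1}$, $(L_1,\ldots,L_s)\mapsto L_1$. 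There $z_1$ comes from the base, and by induction monomials in $z_2,\ldots,z_s$ serve as the Leray--Hirsch fiber generators. This gives evenness and the count $s\cdot(s-1)!=s!$.

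The surjectivity you call the main obstacle is already settled. Once the Serre spectral sequence for $\Fl_s\to B\T^s\to BU(s)$ collapses, the edge map $\Q[z_1,\ldots,z_s]=\vH^*(B\T^s;\Q)\to\vH^*(\Fl_s;\Q)$ is surjective.
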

\begin{proof}
The bundle $L_1\oplus\cdots\oplus L_s$ is the trivial rank $s$ bundle, so
\[
        \prod_{i=1}^s(1+z_i)=1,
\]
and hence $e_1=\cdots=e_s=0$. These are the complete relations for the full
flag manifold; see \cite[Proposition 21.17]{BottTu}. The statement in
\cite{BottTu} is written with real coefficients, but the same presentation over
$\Q$ follows in a similar manner.
\end{proof}

\begin{dfn}\label{dfn:forgetful_map}
For any sequence of dimensions $0<n_1<n_2<\cdots<n_k$ and any subsequence of
indices $1\leq m_1<m_2<\cdots<m_l=k$, let $N_j=n_{m_j}$. The canonical
\emph{forgetful map}
\[
\tau\colon \fl(n_1,\ldots,n_k,\infty)\to
       \fl(N_1,N_2,\ldots,N_l,\infty)
\]
is given by summing the orthogonal projections between the preserved indices:
\[
(p_1,\ldots,p_k)\mapsto
\left(\sum_{j=1}^{m_1}p_j,\sum_{j=m_1+1}^{m_2}p_j,\ldots,
      \sum_{j=m_{l-1}+1}^{m_l}p_j\right).
\]
In the finite case, where $n_k$ is the total dimension, the map is defined
analogously, provided the final ranks match, that is, provided $m_l=k$.
\end{dfn}


\begin{exa}\label{exa:forgetful_12}
Let $\tau\colon \fl(1,2,\infty)\to \fl(2,\infty)$ be the canonical forgetful map
given by $\tau(p_1,p_2)=p_1+p_2$. Let $z_1$ and $z_2$ be the first Chern classes of the tautological line bundles given by the images of the projections $p_1$ and $p_2$. Then we can identify
$\vH^*(\fl(1,2,\infty);\Q)\cong \Q[z_1,z_2]$, with
$\deg(z_1)=\deg(z_2)=2$. Let $E$ be the tautological rank $2$ vector bundle obtained as the image of $p_1+p_2$, let $x = c_1(E)$ and let $y = c_2(E)$. We then identify
$\vH^*(\fl(2,\infty);\Q)\cong \Q[x,y]$, with $\deg(x)=2$ and
$\deg(y)=4$. Then $\tau^*(x)=z_1+z_2$ and $\tau^*(y)=z_1z_2$.

Let $\mu\colon \fl(1,2,\infty)\to \fl(1,2,\infty)$ be the order two
automorphism given by $\mu(p_1,p_2)=(p_2,p_1)$. Then $\tau=\tau\circ\mu$, and
$\mu^*(z_1)=z_2$ and $\mu^*(z_2)=z_1$. The fixed point subring is
$\Q[z_1+z_2,z_1z_2]$, which is the image of $\tau^*$.
\end{exa}

\begin{exa}\label{exa:forgetful_123}
This principle generalizes directly to larger flag manifolds. Consider
$\fl(1,2,3,\infty)$, parameterized by three rank one projections
$(p_1,p_2,p_3)$, with cohomology
$\vH^*(\fl(1,2,3,\infty);\Q)\cong \Q[z_1,z_2,z_3]$.

\begin{enumerate}
\item The forgetful map
\[
\tau_{1,3}\colon \fl(1,2,3,\infty)\to \fl(1,3,\infty)
\] 
is given by
$(p_1,p_2,p_3)\mapsto(p_1,p_2+p_3)$. Write
\[
\vH^*(\fl(1,3,\infty);\Q) = \Q[w_1,x_1,x_2] ,
\]
where $w_1$ is the Chern
class from the rank one bundle and $x_1,x_2$ are the Chern classes from the
rank two bundle. Then
\[
\tau_{1,3}^*(w_1)=z_1,
\qquad \tau_{1,3}^*(x_1)=z_2+z_3,
\; \text{ and } \; \tau_{1,3}^*(x_2)=z_2z_3.
\]
The involution which exchanges $p_2$ and $p_3$ fixes the image of
$\tau_{1,3}^*$ and identifies it with
$\Q[z_1,z_2,z_3]^{\Z_2}$ for the action which exchanges $z_2$ and $z_3$.

\item The map $\tau_3\colon \fl(1,2,3,\infty)\to \fl(3,\infty)$ given by
$(p_1,p_2,p_3)\mapsto p_1+p_2+p_3$ pulls the Chern classes
$c_1,c_2,c_3$ of $\fl(3,\infty)$ back to the elementary symmetric polynomials
in $z_1,z_2,z_3$. The symmetric group $S_3$ acts on $\fl(1,2,3,\infty)$ by
permuting $p_1,p_2,p_3$, and the image of $\tau_3^*$ in rational cohomology is
exactly the fixed point subring $\Q[z_1,z_2,z_3]^{S_3}$.
\end{enumerate}
\end{exa}

\section{General theory}\label{Sec_5904_Gen}

In this section, we develop the general framework needed for our obstruction, which appears as \cref{cor:injectivity_Chern_class_criterion}.

\begin{dfn}\label{def:E_d,y}
Let $Y$ be a compact Hausdorff space, and let $d\in\N$. Fix a
separable infinite dimensional Hilbert space $H$ and set $K=K(H)$. We write
\[
E_{d,Y}=\{\omega \colon C(Y)\to K  \mid 
        \omega\text{ is a  homomorphism and }\rank(\omega(1))=d\}
\]
and, for any $m\geq d$, we set
\[
E_{d,Y,m}=\{\omega \colon C(Y)\to M_m  \mid 
        \omega\text{ is a homomorphism and }\rank(\omega(1))=d\}.
\]
We endow $E_{d,Y}$ and $E_{d,Y,m}$ with the topology of pointwise norm convergence, and view
$E_{d,Y,m}$ as embedded in $E_{d,Y}$. With this topology, the map
$\omega\mapsto \omega(1)$ defines a continuous map from $E_{d,Y}$ to the
Grassmannian of rank $d$ projections in $K$.

Recalling $M_{\infty}$ from Notation~\ref{ntn:M_infty}, we define
\[
\Ea_{d,Y}=\{\omega \colon C(Y)\to M_\infty  \mid 
        \omega\text{ is a homomorphism and }\rank(\omega(1))=d\}.
\]
The space $\Ea_{d,Y}$ is the increasing union of the spaces $E_{d,Y,m}$,
equipped with the direct-limit topology.
\end{dfn}

\begin{lem}\label{lem:E_alg_homotopy_equivalence}
	Let $Y$ be a compact Hausdorff space.
\begin{enumerate}
	\item\label{lem:E_alg_homotopy_equivalence_item_1} Let $Z$ be compact Hausdorff, let $A\subset Z$ be closed, and let
	$f\colon Z\to E_{d,Y}$ be continuous. Suppose that
	$f(A)\subset E_{d,Y,m_0}$ for some $m_0\geq d$. Then there exists
	$m\geq m_0$ such that $f$ is homotopic in $E_{d,Y}$, relative to $A$, to a map
	whose image is contained in $E_{d,Y,m}\subset\Ea_{d,Y}$.
	\item The inclusion $\iota \colon \Ea_{d,Y}\to E_{d,Y}$ is a weak homotopy
	equivalence.
\end{enumerate}
\end{lem}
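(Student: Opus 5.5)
The plan is to imitate the compression argument in the proof of \cref{lem:alg_flag_manifold}, applied now to homomorphisms instead of tuples of projections. Part~(2) then follows from part~(1) exactly as in that lemma. Taking $Z=S^r$ and $A=\{x_0\}$ gives surjectivity on $\pi_r$. Taking $Z=S^r\times[0,1]$ and $A=(S^r\times\{0,1\})\cup(\{x_0\}\times[0,1])$ gives injectivity. Here we use that a compact subset of the direct-limit space $\Ea_{d,Y}$ lies in some finite stage $E_{d,Y,m_0}$, and that maps into a finite stage are continuous into $E_{d,Y}$.

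For part~(1), first reduce to a single projection-valued map. Set $P(z)=f(z)(1)$, a rank $d$ projection depending continuously on $z$. Let $q_m$ be the projection onto the first $m$ basis vectors, with $m\geq m_0$ large enough that $\|q_mP(z)q_m-P(z)\|<1/4$ for all $z$. This is possible by compactness of $P(Z)$ in $K$. On $A$ we already have $q_mP(z)q_m=P(z)$. For $t\in[0,1]$, let $P_t(z)$ be the spectral projection of $(1-t)P(z)+tq_mP(z)q_m$ for the part of the spectrum above $1/2$. This gives a homotopy of rank $d$ projections, constant on $A$, ending at projections $P_1(z)\leq q_m$.

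Next, transport $f$ along this path of projections by unitaries. For projections $p,p'$ with $\|p-p'\|<1$, there is the standard unitary
\[
u(p,p')=\bigl(p'p+(1-p')(1-p)\bigr)\,\bigl|p'p+(1-p')(1-p)\bigr|^{-1},
\]
which lies in $1+K$, depends continuously on $(p,p')$, satisfies $u\,p\,u^*=p'$, and equals $1$ when $p=p'$. Since $\|P_t(z)-P(z)\|<1$ for all $t$ (each $P_t(z)$ is within $1/2$ of $P(z)$ by the spectral estimate), we may set
\[
u_t(z)=u\bigl(P(z),P_t(z)\bigr)
\]
and define
\[
f_t(z)=\operatorname{Ad}(u_t(z))\circ f(z).
\]
Each $f_t(z)$ is a homomorphism with unit $P_t(z)$ of rank $d$. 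Continuity of $(z,t)\mapsto f_t(z)$ in the pointwise norm topology follows from norm continuity of $u_t(z)$ together with continuity of $f$. On $A$ we have $P_t=P$, so $u_t=1$ and the homotopy is relative to $A$.

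At $t=1$, every $f_1(z)(g)$ lies in $P_1(z)KP_1(z)\subset q_mKq_m\cong M_m$, so $f_1$ has image in $E_{d,Y,m}$. The main point to check carefully is the norm estimate guaranteeing $\|P_t(z)-P(z)\|<1$, so that the unitary formula is defined. If the estimate proves awkward, I would instead shrink the $1/4$ threshold, so that all operators involved are uniformly within a small distance.
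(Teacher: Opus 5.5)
Your proposal is correct and follows the paper's proof. You use the same compression $(1-t)P(z)+tq_mP(z)q_m$, the same spectral projection at $1/2$, conjugation to transport $f(z)$, and the same choices of $(Z,A)$ for part (2). In fact your unitary satisfies $u(P(z),P_t(z))P(z)=P_t(z)P(z)\bigl(P(z)P_t(z)P(z)\bigr)^{-1/2}$, which is exactly the paper's partial isometry $v_{z,t}$, so the two homotopies coincide. Your estimate $\|P_t(z)-P(z)\|<1/2$ is valid, since both $\|P_t(z)-b\|$ and $\|b-P(z)\|$ are less than $1/4$, where $b=(1-t)P(z)+tq_mP(z)q_m$.
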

\begin{proof}
For $z\in Z$, put $p_z=f(z)(1)$. Let $q_m$ be the projection onto the first
$m$ basis vectors of $H$. Since $Z$ is compact,
$q_m p_z q_m\to p_z$ uniformly in $z$. Choose $m\geq m_0$ such that
\[
\sup_{z\in Z}\|q_m p_zq_m-p_z\|<1/4.
\]
For $0\leq t\leq 1$, set
\[
b_{z,t}=(1-t)p_z+tq_m p_zq_m,
\]
and let
\[
r_{z,t}=\chi_{(1/2,\infty)}(b_{z,t})
\]
be the spectral projection of $b_{z,t}$. The projections $r_{z,t}$ are
continuous in $(z,t)$, satisfy $r_{z,0}=p_z$, and lie in $M_m$ when $t=1$.

Since $\|r_{z,t}-p_z\|<1$, the element $p_zr_{z,t}p_z$ is invertible in the
cutdown by $p_z$. Viewing $(p_z r_{z,t}p_z)^{-1/2}$ as an element in the
cutdown by $p_z$, the formula
\[
v_{z,t}=r_{z,t}p_z(p_z r_{z,t}p_z)^{-1/2}
\]
defines a continuous partial isometry with $v_{z,t}^*v_{z,t}=p_z$ and
$v_{z,t}v_{z,t}^*=r_{z,t}$. For $a\in C(Y)$, set
\[
f_t(z)(a)=v_{z,t} f(z)(a) v_{z,t}^* .
\]
Then $f_t(z)$ is a homomorphism $C(Y)\to K$ with $f_t(z)(1)=r_{z,t}$.
Thus $t\mapsto f_t$ is a homotopy in $E_{d,Y}$ from $f=f_0$ to a map
$f_1$ with image in $E_{d,Y,m}\subset\Ea_{d,Y}$.

It remains to check that the homotopy is relative to $A$. If $z\in A$, then
$p_z\in M_{m_0}\subset M_m$, so $q_m p_zq_m=p_z$. Hence
$b_{z,t}=p_z$, $r_{z,t}=p_z$, and $v_{z,t}=p_z$ for all $t$. Therefore
$f_t(z)=f(z)$ for all $z\in A$ and all $t$.

To show weak homotopy equivalence, we need to show that $\iota$ induces injective and surjective maps on all homotopy groups. The case of $\pi_0$ is immediate. Let $r \in \N$. Fix a basepoint $x_0 \in S^r$. Surjectivity follows from \eqref{lem:E_alg_homotopy_equivalence_item_1} with $Z = S^r$ and $A=\{x_0\}$. For injectivity, suppose $f_0,f_1 \colon S^r \to  \Ea_{d,Y}$ are two based continuous maps
such that $\iota \circ f_0$ and $\iota \circ f_1$ are homotopic.
Because $f_0 (S^r)$ and $f_1 (S^r)$ are compact,
there exists $m_0 \in \N$ such that these two images are contained in $E_{d,Y,m_0}$.
Set
\[
Z = S^r \times [0,1]
\andeqn
A=( S^r\times\{0,1\} ) \cup ( \{x_0\} \times [0,1] ).
\]
Then the homotopy is a based map $g \colon Z \to E_{d,Y}$ such that for $j=0,1$ we have $g|_{S^r \times \{j\}} = \iota \circ f_j$. Using \eqref{lem:E_alg_homotopy_equivalence_item_1}, there exists $m \geq m_0$ such that $g$ is homotopic relative to $A$ to a map whose image is in $E_{d,Y,m}$. The restriction of $\iota$ to $E_{d,Y,m}$ is a homeomorphism, so $f_0$ and $f_1$ are homotopic in $\Ea_{d,Y}$. 
\end{proof}

The following straightforward lemma plays an important role in the sequel. The proof is immediate, and is omitted.
\begin{lem} \label{lem_general_theory_h_vaprhi}
	Let $X$ and $Y$ be compact Hausdorff spaces, and let $d \in \N$. 
	For a homomorphism  $\varphi\colon C(Y)\to C(X,K)$  such that $\varphi(1)$ is a rank $d$ projection, define 
	$h_\varphi \colon X\to E_{d,Y}$ by 
	\[
	h_\varphi(x)(f)=\varphi(f)(x) .
	\] 
	The map $\varphi \mapsto h_{\varphi}$ defines a one-to-one correspondence between the space of such homomorphisms and the space of continuous maps from $X$ to $E_{d,Y}$.
	Furthermore, $\varphi_1$ is homotopic to $\varphi_2$ if and only if $h_{\varphi_1}$ is homotopic to $h_{\varphi_2}$. 
\end{lem}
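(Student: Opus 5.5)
The plan is to check the two assertions of \cref{lem_general_theory_h_vaprhi} in turn: the bijection, then the matching of homotopies. Both reduce to the exponential law for maps into a space carrying the topology of pointwise norm convergence, with $X$ (or $X\times[0,1]$) compact.

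\textbf{Well-definedness of $h_\varphi$.} For $x\in X$, the map $h_\varphi(x)=\ev_x\circ\varphi$ is a homomorphism $C(Y)\to K$. Its unit is sent to $\varphi(1)(x)$, which is a rank $d$ projection by hypothesis, so $h_\varphi(x)\in E_{d,Y}$. Continuity of $h_\varphi$ is to be checked in the pointwise norm topology. So one needs: for each $f\in C(Y)$, the map $x\mapsto \varphi(f)(x)$ is norm continuous. This holds because $\varphi(f)\in C(X,K)$.

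\textbf{The inverse map.} Given a continuous $h\colon X\to E_{d,Y}$, define $\varphi_h(f)(x)=h(x)(f)$. Continuity of $h$ in the pointwise norm topology says exactly that $\varphi_h(f)\in C(X,K)$. The map $\varphi_h$ is a homomorphism because each $h(x)$ is one and the operations in $C(X,K)$ are pointwise. Moreover $\varphi_h(1)(x)=h(x)(1)$ has rank $d$ at every $x$. The two constructions are visibly inverse to each other.

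\textbf{Homotopies.} Interpret a homotopy of homomorphisms as a path $t\mapsto\varphi_t$ that is continuous in the pointwise norm topology on maps $C(Y)\to C(X,K)$ with the sup norm, and in which each $\varphi_t(1)$ has rank $d$. Such a path is the same as a single homomorphism $\Phi\colon C(Y)\to C(X\times[0,1],K)$: indeed $C(X\times[0,1],K)\cong C([0,1],C(X,K))$ because $X$ is compact, so joint continuity in $(x,t)$ is equivalent to continuity of $t\mapsto\varphi_t(f)$ in sup norm. Now apply the bijection already proved to the compact space $X\times[0,1]$. It identifies such $\Phi$ with continuous maps $X\times[0,1]\to E_{d,Y}$, that is, with homotopies between $h_{\varphi_1}$ and $h_{\varphi_2}$.

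\textbf{Main obstacle.} The only genuinely nonformal point is the identification $C(X\times[0,1],K)\cong C([0,1],C(X,K))$. This is the step where compactness of $X$, through uniform continuity on $X\times[0,1]$, is used. Everything else is bookkeeping.
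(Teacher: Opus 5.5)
Your proposal is correct and is the routine verification the paper has in mind; the paper omits the proof as immediate. Continuity into $E_{d,Y}$ can be tested one $f$ at a time because the pointwise norm topology is an initial topology, and homotopies are handled by applying the bijection to the compact space $X\times[0,1]$ via $C(X\times[0,1],K)\cong C([0,1],C(X,K))$. You do not even need to build the rank condition on $\varphi_t(1)$ into the definition of homotopy. For each $x$, $t\mapsto\varphi_t(1)(x)$ is a norm-continuous path of projections in $K$, so its rank is constant automatically.
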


By \cref{lem:E_alg_homotopy_equivalence}, applied with $Z=X$, the map 
$h_\varphi$ is
homotopic to a map whose image is contained in $\Ea_{d,Y}$. Replacing $\varphi$
by the homotopic homomorphism associated to this new map does not change the
induced map on $K$-theory. Thus, in the
homotopy invariance arguments below, we assume that
$h_\varphi\colon X\to \Ea_{d,Y}$.

We will need algebras of continuous functions on $E_{d, Y}$ or on
$\Ea_{d, Y}$, and their $K$-theory.
These spaces are not compact, so
we need pro-C*-algebras, or at least $\sigma$-C*-algebras.

Recall that a pro-C*-algebra is a complete Hausdorff topological
$*$-algebra whose topology is determined by its continuous
C*-seminorms
\cite[Definition~1.1]{PhillipsInverseLimits}
and that a $\sigma$-C*-algebra is a pro-C*-algebra
whose topology is determined by a countable family of such seminorms
\cite[introduction to Section~5]{PhillipsInverseLimits}.

If $Z$ is a topological space and $A$ is a C*-algebra,
we write $C (Z, A)$
for the $*$-algebra of continuous $A$-valued functions on $Z$, with the
topology of uniform convergence on compact subsets.
We write $C (Z)$ when $A=\C$.
If $Z$ is compactly generated, that is, if a subset $F\subset Z$ is closed if
and only if $F\cap K$ is closed for every compact subset $K\subset Z$, then
$C (Z, A)$ is a pro-C*-algebra.
See \cite[Example~1.3(3)]{PhillipsInverseLimits} for the case $A = \C$,
and see \cite[Lemma~2.4]{PhillipsInverseLimits} and the
preceding material in \cite[Section~2]{PhillipsInverseLimits} for the proof.
Although not stated in this generality, the proof applies
equally well to $C (Z, A)$.
In particular, this applies whenever $Z$ is metrizable.

Recall (see before \cite[Proposition~5.7]{PhillipsInverseLimits})
that a space $Z$ is countably compactly generated
if there is a countable family $\cC$ of compact subsets of $Z$ such
that a subset $F\subset Z$ is closed if and only if $F\cap K$ is closed for
every $K\in\cC$.

\begin{lem}\label{L_6706_CZA}
	Let $Z$ be countably compactly generated and let $A$ be a \ca.
	Then $C (Z, A)$ is a $\sigma$-C*-algebra.
\end{lem}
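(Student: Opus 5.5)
The plan is to realize $C(Z,A)$ as a countable inverse limit of C*-algebras, which is exactly what it means to be a $\sigma$-C*-algebra in the sense of \cite{PhillipsInverseLimits}. Let $\cC=\{K_1,K_2,\ldots\}$ be a countable family of compact subsets witnessing that $Z$ is countably compactly generated. First I would check that $Z$ is compactly generated, so that, as recalled above, $C(Z,A)$ is already a pro-C*-algebra. If $F\cap K$ is closed for every compact $K$, then in particular $F\cap K_j$ is closed for all $j$, hence $F$ is closed. Then I would enlarge $\cC$ to the increasing family $L_n=K_1\cup\cdots\cup K_n$. This family is still countable and consists of compact sets, and $F\cap L_n$ closed for all $n$ still forces $F$ closed.

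Next I would introduce the countable family of C*-seminorms $p_n(f)=\sup_{z\in L_n}\|f(z)\|$. The main step is to show that these seminorms determine the topology of $C(Z,A)$. That topology is the topology of uniform convergence on compact subsets, that is, the one determined by the seminorms $p_K(f)=\sup_{z\in K}\|f(z)\|$ for all compact $K\subset Z$. So it suffices to show that each $p_K$ is continuous with respect to the $p_n$.

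This is the expected obstacle, because an arbitrary compact $K$ need not lie in any single $L_n$. I would first try to prove that every compact $K\subset Z$ is contained in some $L_n$, arguing by contradiction in the style of the proof of \cite[Proposition~5.7]{PhillipsInverseLimits}. Suppose no $L_n$ contains $K$, and pick $z_n\in K\setminus L_n$. Consider the set $S=\{z_n\}$, possibly after passing to a subsequence and removing accumulation-type points so that $S$ is infinite and discrete. Since $z_m\in L_n$ forces $m<n$, each $S\cap L_n$ is finite, and every subset of $S$ meets each $L_n$ in a finite set. In a Hausdorff space finite sets are closed, so every subset of $S$ is closed in $Z$. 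Hence $S$ is an infinite closed discrete subset of the compact set $K$, which is a contradiction.

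The delicate point is the Hausdorff assumption. If it is not implicit in the paper, I would instead work with $T_1$-ness, or restrict to the metrizable spaces actually used, where finite sets are closed. Given the containment $K\subset L_n$, we get $p_K\le p_n$, so the $p_n$ determine the topology. Alternatively, if a full statement seems needed, I would cite \cite[Proposition~5.7]{PhillipsInverseLimits} for $A=\C$ and note that the same argument works verbatim for $A$-valued functions. Completeness is then inherited from the pro-C*-algebra structure, and this gives the conclusion.
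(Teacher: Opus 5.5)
Your strategy is sound, but it is organized differently from the paper's proof. The paper argues entirely by citation. It gets that $C(Z)$ is a $\sigma$-C*-algebra from \cite[Proposition~5.7]{PhillipsInverseLimits}, identifies $C(Z,A)$ with $C(Z)\otimes A$ by \cite[Proposition~3.4]{PhillipsInverseLimits}, and uses \cite[Proposition~5.9(1)]{PhillipsInverseLimits} to see that this tensor product is again a $\sigma$-C*-algebra. You instead run the scalar argument directly on $A$-valued functions. You show that every compact set lies in some $L_n$, so the countably many seminorms $p_n$ already define the topology of uniform convergence on compact sets, and you take completeness from the pro-C*-algebra structure. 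Completeness can also be checked by hand, since a function continuous on each $K\in\cC$ is continuous on $Z$. This is exactly the point the paper's remark after the proof singles out. Your version is self-contained and avoids tensor products of pro-C*-algebras; the paper's is shorter because it delegates both the hemicompactness argument and completeness to Phillips.

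As written, however, one step fails: nothing ensures that $S=\{z_n\}$ is infinite. If some point $z\in K$ lies in no $L_n$, you can have $z_n=z$ for all $n$, and passing to a subsequence cannot turn a finite set into an infinite one. Removing accumulation points is unnecessary anyway, since once every subset of $S$ is closed, $S$ is automatically discrete. The missing ingredient is the covering condition $Z=\bigcup\cC$. With it, each point of $Z$ lies in $L_n$ for all large $n$, so it equals $z_n$ for only finitely many $n$; then $S$ is infinite and your contradiction goes through. You must invoke this condition explicitly, because it does not follow from the definition as paraphrased in the paper. An uncountable discrete $Z$ with $\cC=\varnothing$ satisfies that paraphrase, yet $C(Z)=\prod_{z\in Z}\C$ with the product topology is not metrizable, hence not a $\sigma$-C*-algebra. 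The covering condition belongs to the intended notion of a countable direct limit of compact subsets, and it holds for $\Ea_{d,Y}=\bigcup_m E_{d,Y,m}$. Finally, your fallback of restricting to ``the metrizable spaces actually used'' misdescribes the application. The space $\Ea_{d,Y}$ carries a direct limit topology and is not metrizable, but it is Hausdorff, and that is all your argument needs.
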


\begin{proof}
	Use \cite[Proposition~5.7 and Proposition~5.9(1)]{PhillipsInverseLimits},
	together with the identification,
	in \cite[Proposition~3.4]{PhillipsInverseLimits},
	of $C (Z, A)$ with $C (Z) \otimes A$.
\end{proof}

The main point
(in the proof of \cite[Proposition~5.7]{PhillipsInverseLimits}
when $A = \C$) is that
the seminorms $f \mapsto \sup_{z \in K}\ \|f (z) \|$, for $K \in \cC$,
define the topology of uniform convergence on compact subsets.

The advantage of using $\Ea_{d, Y}$ is the following lemma and its corollary.

\begin{lem}\label{lem_E_alg_countably_compactly_generated}
	Let $Y$ be compact Hausdorff and let $d \in \N$.
	Then $\Ea_{d, Y}$ is countably compactly generated.
\end{lem}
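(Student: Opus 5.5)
The natural countable family is $\cC=\{E_{d,Y,m} \mid m\geq d\}$. I would show that each $E_{d,Y,m}$ is compact, and then use the definition of the direct-limit topology on $\Ea_{d,Y}$. By \cref{def:E_d,y}, $\Ea_{d,Y}=\bigcup_{m} E_{d,Y,m}$ with the direct-limit topology. So a set $F\subset\Ea_{d,Y}$ is closed if and only if $F\cap E_{d,Y,m}$ is closed in $E_{d,Y,m}$ for every $m$. This is exactly the defining property of countable compact generation with respect to $\cC$. Before this works, I need each $E_{d,Y,m}$ to be compact Hausdorff, and the inclusions $E_{d,Y,m}\subset E_{d,Y,m+1}$ to be closed embeddings, so that the subspace topology each stage inherits from $\Ea_{d,Y}$ agrees with its own topology.

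\textbf{Step 1: compactness of $E_{d,Y,m}$.} The space $E_{d,Y,m}$ is a subset of the set of contractive linear maps $C(Y)\to M_m$, since homomorphisms of C*-algebras are contractive. I would use that bounded sets of maps into the finite-dimensional space $M_m$ are compact in the point-norm topology. Here is the argument: the unit ball of $L(C(Y),M_m)\cong (C(Y)^*)^{m^2}$ is weak* compact by Banach--Alaoglu, and on bounded sets point-norm convergence is the same as coordinatewise weak* convergence, because $M_m$ is finite dimensional. The defining conditions are that $\omega$ is multiplicative, $*$-preserving, and has $\rank(\omega(1))=d$. The first two are closed under pointwise limits. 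The rank condition is also closed, because $\omega(1)$ is a projection and the set of rank $d$ projections in $M_m$ is closed (rank equals trace, which is continuous). So $E_{d,Y,m}$ is a closed subset of a compact Hausdorff space, hence compact Hausdorff.

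One subtlety: if $Y$ is not metrizable, the weak* topology on the ball need not be metrizable, but that is irrelevant, since only compactness is needed. It is also worth noting that one can avoid Banach--Alaoglu. Point evaluations give a description $E_{d,Y,m}\cong\bigl(Y^d\times U(m)\bigr)/\!\sim$ as a continuous image of a compact space, via $(y_1,\ldots,y_d,u)\mapsto \bigl(f\mapsto u\,\mathrm{diag}(f(y_1),\ldots,f(y_d),0,\ldots,0)\,u^*\bigr)$. Continuity of this map in the point-norm topology is clear, and surjectivity is the spectral theorem for finite-dimensional commutative $*$-representations. I would present this version, since it is elementary.

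\textbf{Step 2: the embeddings.} The inclusion $E_{d,Y,m}\to E_{d,Y,m+1}$ is continuous and injective. Since its domain is compact and its target Hausdorff, it is a closed embedding. Hence the direct-limit topology restricts to the given topology on each stage, and each $E_{d,Y,m}$ is a compact subset of $\Ea_{d,Y}$.

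\textbf{Step 3: conclusion.} A set $F$ is closed in $\Ea_{d,Y}$ if and only if every $F\cap E_{d,Y,m}$ is closed, by the definition of the direct-limit topology. So $\cC$ witnesses countable compact generation.

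The only step requiring any care is Step 1, specifically identifying the point-norm topology and checking continuity of the parametrization. I expect it to be routine; the rest is formal.
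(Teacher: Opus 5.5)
Your proposal is correct and follows essentially the same route as the paper: it uses the family $\{E_{d,Y,m}\}_{m\geq d}$, shows each stage is compact as a continuous image of a compact parameter space (the paper uses $Y^d\times\fl(1,2,\ldots,d,m)$ where you use $Y^d\times U(m)$), and then applies the defining property of the direct-limit topology. Your Step~2 on closed embeddings spells out a point the paper leaves tacit, and your Banach--Alaoglu alternative is valid but not needed.
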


\begin{proof}
	For $m>d$, the stage $E_{d, Y, m}$ is the continuous image of the compact space
	$Y^d\times\fl(1,2,\ldots,d,m)$ under
	\[
	(y_1,\ldots,y_d;q_1,\ldots,q_d,q_{d+1})
	\longmapsto
	\left(f\mapsto \sum_{j=1}^d f(y_j)q_j\right),
	\]
	and, for $m=d$, the same formula using $Y^d\times\fl(1,2,\ldots,d)$ and the
	projections $q_1,\ldots,q_d$. In both cases, it follows that  
	$E_{d, Y, m}$ is compact.
	Since $\Ea_{d, Y}$ is equipped with the direct limit
	topology for the increasing union
	\[
	\Ea_{d, Y}=\bigcup_{m=d}^{\infty} E_{d, Y, m},
	\]
	a subset $F\subset \Ea_{d, Y}$ is closed if and only if
	$F\cap E_{d, Y, m}$ is closed for every $m\geq d$. Thus the countable family
	$\{E_{d, Y, m}:m\geq d\}$ of compact subsets shows that $\Ea_{d, Y}$ is
	countably compactly generated.
\end{proof}

\begin{cor}\label{cor_C_E_alg_sigma}
	Let $Y$ be compact Hausdorff, let $d \in \N$, and let $A$ be a C*-algebra.
	Then $C (\Ea_{d, Y}, A)$ is a $\sigma$-C*-algebra.
\end{cor}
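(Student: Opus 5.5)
The corollary is immediate once we combine \cref{L_6706_CZA} with \cref{lem_E_alg_countably_compactly_generated}. By \cref{lem_E_alg_countably_compactly_generated}, $\Ea_{d,Y}$ is countably compactly generated. A suitable countable family is $\cC=\{E_{d,Y,m}: m\geq d\}$, each member of which is compact as the continuous image of $Y^d$ times a finite flag manifold. We then apply \cref{L_6706_CZA} with $Z=\Ea_{d,Y}$ and the given C*-algebra $A$, and conclude directly that $C(\Ea_{d,Y},A)$ is a $\sigma$-C*-algebra.

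The one point to check is that $C(\Ea_{d,Y},A)$ is the object \cref{L_6706_CZA} refers to, namely continuous $A$-valued functions with the topology of uniform convergence on compact subsets. This is how we defined $C(Z,A)$ for an arbitrary topological space $Z$, so nothing extra is needed.

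For the reader's orientation, the relevant seminorms are concrete. They are $f\mapsto\sup_{\omega\in E_{d,Y,m}}\|f(\omega)\|$ for $m\geq d$, and these are countably many. They determine the topology because every compact subset of the direct limit $\Ea_{d,Y}$ lies in some finite stage $E_{d,Y,m}$; this is the standard fact for countable direct limits of compact Hausdorff spaces with closed inclusions. Completeness comes from the characterization of closed sets by intersections with the stages, which is what \cref{lem_E_alg_countably_compactly_generated} provides. Both points are already contained in the cited results, so I do not expect any step to be an obstacle.
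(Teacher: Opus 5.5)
Your proposal is correct and is exactly the paper's proof: \cref{lem_E_alg_countably_compactly_generated} gives countable compact generation via the compact stages $E_{d,Y,m}$, and \cref{L_6706_CZA} then yields the $\sigma$-C*-algebra structure. Your extra remarks about the seminorms $f\mapsto\sup_{\omega\in E_{d,Y,m}}\|f(\omega)\|$ are consistent with the paper's comment following \cref{L_6706_CZA}, and they are not needed for the argument.
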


\begin{proof}
	Combine \cref{L_6706_CZA} and \cref{lem_E_alg_countably_compactly_generated}.
\end{proof}

Kasparov's $KK$-theory has been extended to pro-C*-algebras
by Weidner \cite{WeidnerKKI, WeidnerKKII}.
Here we only need representable $K$-theory, and, by \cref{cor_C_E_alg_sigma},
only for $\sigma$-C*-algebras; this case is much easier.
For $\sigma$-C*-algebras, representable
$K$-theory was originally developed in \cite[Sections~1--3]{PhillipsRK}.
We recall the following equivalent but simpler
projection and unitary formulation of
\cite[Definition~1.1]{PhillipsToeplitz}.
Let $A$ be a $\sigma$-C*-algebra.
For any $\sigma$-C*-algebra~$B$, let $B^{+}$ denote its unitization.
Then $RK_1 (A)$ is the set of homotopy classes in the
set of unitaries $u \in (K \otimes A)^+$ such that $u - 1 \in K \otimes A$,
and $RK_0 (A)$ is the set of homotopy classes in the
set of projections $p \in M_2 \left ( (K \otimes A)^+ \right )$ such that
$p - \left( \begin{smallmatrix}
	1     &  0        \\
	0     &  0
\end{smallmatrix} \right) \in M_2 (K \otimes A)$.

We recall only the properties used in this paper.
The groups $RK_i (A)$ are functorial
homotopy invariant abelian groups, and for ordinary C*-algebras they agree
with ordinary $K$-theory
\cite[Propositions~1.4--1.7]{PhillipsToeplitz}.
They satisfy Bott periodicity
\cite[Theorem~2.4 and Theorem~3.7]{PhillipsToeplitz}, and
they satisfy the six-term exact sequence, countable additivity,
the Milnor $\lim^1$ sequence,
and stability \cite[Section~4.2]{PhillipsToeplitz}.
If $Z$ is countably
compactly generated, then $RK_*(C (Z))$ agrees with the usual representable
$K$-theory of $Z$ \cite[Theorem~3.3(1)]{PhillipsRK}; we denote this graded
group by $RK^*(Z)$.

We use stability in the following concrete form.
\begin{ntn}\label{ntn:kappa_Z}
	Let $Z$ be countably compactly generated, and let $e\in K$ be a rank one
	projection.
	Define
	\[
	\kappa_Z\colon C (Z) \to C (Z,K) 
	\; \text{ by } \;
	\kappa_Z(f)(z)=f(z)e .
	\]
\end{ntn}
The map $(\kappa_Z)_*$ is an isomorphism in representable $K$-theory, by
stability \cite[Section~4.2, Step~8, p.~248]{PhillipsToeplitz}. For compact
$Z$, this is the usual corner isomorphism $K_*(C(Z))\cong K_*(C (Z,K))$.

The arguments below use only functoriality, comparison, homotopy
invariance, Bott periodicity, exactness, and stability properties listed above.


\begin{dfn}\label{dfn_general_theory_def_of_psi}
Let $Y$ be a compact Hausdorff space, and let $d \in \N$. We define a canonical universal homomorphism 
\[
\psi_{Y,d} \colon C(Y)\to C(\Ea_{d,Y},K)
\]
by 
\[\psi_{Y,d}(f)(\eta)=\eta(f) .
\]
For a compact Hausdorff space $X$ and for $\varphi \colon C(Y) \to C(X,M_{\infty})$ such that $\varphi(1)$ has rank $d$, using $h_\varphi\colon X\to \Ea_{d,Y}$ as in Lemma~\ref{lem_general_theory_h_vaprhi}, define
\[
\lambda_\varphi^0\colon C(\Ea_{d,Y})\to C(X)
\; \text{ and } \;
\lambda_\varphi\colon C(\Ea_{d,Y},K)\to C(X,K)
\]
by
\[
\lambda_\varphi^0(f)(x)=f(h_\varphi(x)) 
\; \text{ and } \;
\lambda_\varphi(g)(x)=g(h_\varphi(x))
\]
for $f \in C(\Ea_{d,Y})$ and for $g \in C(\Ea_{d,Y},K)$. Note that $\lambda_\varphi = \lambda_\varphi^0 \otimes \id_{K}$. 
\end{dfn}
The algebra $C(\Ea_{d,Y},K)$ appears somewhat awkward, as the domain is the algebraic version of the space of homomorphisms while the codomain is the compact operators rather than $M_{\infty}$. However, these choices are needed so as to ensure that we get a $\sigma$-C*-algebra.
\begin{lem}\label{lem_eqn_general_theory_diagram}
	With the notation of Definition~\ref{dfn_general_theory_def_of_psi}, 
the following diagram commutes:
\[
\xymatrix{
C(Y) \ar[rr]^{\psi_{Y,d}} \ar[dr]_{\varphi} & & C(\Ea_{d,Y},K)
\ar[dl]^{\lambda_{\varphi}} \\
& C(X,K)
}
\]
\end{lem}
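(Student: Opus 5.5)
The plan is to verify the identity $\lambda_\varphi\circ\psi_{Y,d}=\varphi$ pointwise, directly from the definitions. Fix $f\in C(Y)$ and $x\in X$. By \cref{dfn_general_theory_def_of_psi}, $\lambda_\varphi(g)(x)=g(h_\varphi(x))$ for $g\in C(\Ea_{d,Y},K)$. Taking $g=\psi_{Y,d}(f)$ gives
\[
\lambda_\varphi(\psi_{Y,d}(f))(x)=\psi_{Y,d}(f)(h_\varphi(x))=h_\varphi(x)(f).
\]
By \cref{lem_general_theory_h_vaprhi}, $h_\varphi(x)(f)=\varphi(f)(x)$. Since $x$ and $f$ are arbitrary, the diagram commutes.

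The only points needing care are well-definedness issues.
\begin{enumerate}
\item One must check that $\psi_{Y,d}(f)$ really lies in $C(\Ea_{d,Y},K)$. This means $\eta\mapsto\eta(f)$ should be continuous for the direct-limit topology. On each $E_{d,Y,m}$ it is continuous, since the topology there is pointwise norm convergence. Continuity on the direct limit then follows, because a map out of a direct limit is continuous exactly when it is continuous on each stage.
\item One must check that $\lambda_\varphi(g)$ is continuous on $X$. This holds because it is the composite $g\circ h_\varphi$, and $h_\varphi$ is continuous into $\Ea_{d,Y}$ by the standing assumption after \cref{lem_general_theory_h_vaprhi}.
\item The maps are $*$-homomorphisms, because all operations are pointwise.
\end{enumerate}

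There is no real obstacle. The mild subtlety is continuity of $\psi_{Y,d}(f)$ for the direct-limit topology, which I would settle by checking it stage by stage, as in item 1.
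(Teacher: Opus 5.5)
Your proposal is correct and is exactly the paper's argument: the pointwise chain $\lambda_\varphi(\psi_{Y,d}(f))(x)=\psi_{Y,d}(f)(h_\varphi(x))=h_\varphi(x)(f)=\varphi(f)(x)$. The well-definedness checks you add are left implicit in the paper, and they are correct: continuity of $\psi_{Y,d}(f)$ holds stage by stage on the direct limit, and continuity of $g\circ h_\varphi$ holds because $h_\varphi$ is continuous into $\Ea_{d,Y}$.
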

\begin{proof} 
	For any $f \in C(Y)$ and for any $x \in X$, we have 
	\[
	((\lambda_\varphi\circ \psi_{Y,d})(f))(x)
	=\psi_{Y,d}(f)(h_\varphi(x))=h_\varphi(x)(f)=\varphi(f)(x) .
	\]
	This completes the proof.
\end{proof}

\begin{lem}\label{lem:universal-map-injective}
After identifying $K_0(C(Y))$ with $RK_0(C(Y))$, the map $\psi_{Y,d}$ from
Definition~\ref{dfn_general_theory_def_of_psi} induces an injective map
\[
        (\psi_{Y,d})_*\colon K_0(C(Y))\to RK_0(C(\Ea_{d,Y},K)).
\]
\end{lem}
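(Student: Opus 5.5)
Use the factorization of \cref{lem_eqn_general_theory_diagram}: if for each class $a \in K_0(C(Y))$ with $(\psi_{Y,d})_*(a)=0$ we can find a homomorphism $\varphi \colon C(Y)\to C(X,M_\infty)$, with $\varphi(1)$ of rank $d$, such that $\varphi_*(a)\neq 0$, then $\varphi_* = (\lambda_\varphi)_*\circ(\psi_{Y,d})_*$ forces $a=0$. Here we use functoriality of $RK_0$ and that it agrees with $K_0$ on $C(X,K)$ for compact $X$. The task is therefore to produce enough test homomorphisms $\varphi$ that jointly detect all of $K_0(C(Y))$.

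The natural first choice is $X=Y$ and
\[
\varphi(f)=f\cdot e_{1}+\sum_{j=2}^{d} f(y_0)e_{j},
\]
where $e_1,\ldots,e_d$ are orthogonal rank one projections in $M_d\subset M_\infty$ and $y_0\in Y$ is a fixed basepoint. Then $\varphi=\kappa_Y\oplus \beta$, where $\beta$ factors through evaluation at $y_0$ followed by the inclusion $\C\to C(Y)$ as constants. So $\varphi_*(a)=(\kappa_Y)_*(a)+(d-1)\,\iota_*(\ev_{y_0})_*(a)$, with $\iota\colon\C\to C(Y)$ the unit inclusion. The map $(\kappa_Y)_*$ is the corner isomorphism $K_0(C(Y))\cong K_0(C(Y,K))$. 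Identifying both sides with $K_0(C(Y))$, $\varphi_*$ becomes $a\mapsto a+(d-1)\,\rank_{y_0}(a)[1]$.

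This endomorphism is injective. Split $K_0(C(Y))=\widetilde K_0\oplus \Z[1]$ when $Y$ is connected (or work componentwise via the rank function in general). The map is the identity on reduced classes and multiplication by $d$ on the rank part, and multiplication by $d$ is injective on the free part $\Z[1]$. If $Y$ is not connected, pick a basepoint $y_i$ in each component, or simply note that the rank part is $C(Y,\Z)$ acting by $n\mapsto n+(d-1)n(y_0)[1]$. This needs a small check: the map $a\mapsto a+(d-1)\,a(y_0)\,[1]$ on $C(Y,\Z)\oplus\widetilde K_0$ is injective. Indeed, evaluating the rank at $y_0$ gives $d\,a(y_0)=0$, so $a(y_0)=0$, and hence $a=0$.

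The main obstacle is bookkeeping rather than substance. We must make sure that $h_\varphi$ actually lands in $\Ea_{d,Y}$, which holds because $\varphi$ takes values in $M_d$. We must also make sure that the composite identifications $K_0(C(Y))\cong RK_0(C(Y))$ and $K_0(C(Y,K))\cong K_0(C(Y))$ are compatible with $\lambda_\varphi$, which follows from functoriality and the comparison properties listed above. Finally, we must check that the class computation for $\varphi=\kappa_Y\oplus\beta$ is valid, i.e.\ that $K_0$ is additive on orthogonal sums of homomorphisms. With these in place, injectivity of $\varphi_*$ gives injectivity of $(\psi_{Y,d})_*$.
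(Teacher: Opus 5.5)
Your proposal is correct and is essentially the paper's proof: the same test homomorphism $\varphi(f)=fe_1+f(y_0)(e_2+\cdots+e_d)$ with $X=Y$, the same formula $\varphi_*(a)=a+(d-1)\rank_{y_0}(a)[1]$, and the same conclusion via $\varphi_*=(\lambda_\varphi)_*\circ(\psi_{Y,d})_*$. Your final check spells out the injectivity that the paper leaves implicit: taking rank at $y_0$ gives $d\cdot\rank_{y_0}(a)=0$. That argument works for arbitrary $Y$, so the split into connected and disconnected cases is unnecessary.
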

\begin{proof}
In Lemma~\ref{lem_eqn_general_theory_diagram}, take $X=Y$. Fix pairwise orthogonal
rank one projections
$e_1,e_2,\ldots,e_d\in M_d\subset M_\infty\subset K$, fix a point
$y_0\in Y$, and define
\[
        \varphi(f)(y)=f(y)e_1+f(y_0)(e_2+\cdots+e_d)
        \; \text{ for }  f\in C(Y) \text{ and } y\in Y.
\]
This is a homomorphism from the C*-algebra $C(Y)$ to the C*-algebra 
$C(Y,K)$. Hence its induced map in representable $K$-theory agrees
with the usual map on $K$-theory. 

With the usual identification of $K_0(C(Y))$ with $K_0(C(Y,K))$, we have
\[
        \varphi_*(\xi)=\xi+(d-1)(\ev_{y_0})_*(\xi)[1_Y]
        \; \text{ for } \; \xi\in K_0(C(Y)),
\]
and in particular, $\varphi_*$ is injective. 

By functoriality of representable $K$-theory and the commuting diagram in
Lemma~\ref{lem_eqn_general_theory_diagram}, we have $\varphi_*=(\lambda_\varphi)_*\circ (\psi_{Y,d})_*$.
Since $\varphi_*$ is injective, $(\psi_{Y,d})_*$ is injective.
\end{proof}

In the next lemma, as above, we
 identify $RK^*(\Ea_{d,Y})$ with $RK_*(C(\Ea_{d,Y}))$, using  
\cite[Theorem~3.3(1)]{PhillipsRK}. 

\begin{lem} \label{lem_pullback_identification}
	Let $\varphi \colon C(Y) \to C(X,M_{\infty})$ be a homomorphism such that $\rank(\varphi(1)) = d$. 
	With the notation as in Notation~\ref{ntn:kappa_Z} and \cref{dfn_general_theory_def_of_psi},
	if $\xi \in K_0(C(Y))$ and
	\[
	\zeta=\left (\kappa_{\Ea_{d,Y}} \right )_*^{-1} \left (  \left ( \psi_{Y,d} \right )_*(\xi) \right ) \in RK^0(\Ea_{d,Y}) ,
	\]
	then
	$ h_\varphi^*(\zeta)=\left ( (\kappa_X )_*^{-1}\circ \varphi_* \right ) (\xi)$.
\end{lem}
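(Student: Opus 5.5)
The plan is to read off the claim from the commuting triangle of \cref{lem_eqn_general_theory_diagram}, combined with naturality of the stabilization maps $\kappa_Z$ from Notation~\ref{ntn:kappa_Z}. Here $h_\varphi^*$ on $RK^0(\Ea_{d,Y})$ is, under the identification of $RK^*$ with $RK_*$ of the function algebra, the map induced by $\lambda_\varphi^0\colon C(\Ea_{d,Y})\to C(X)$. On the target side, $X$ is compact, so $RK_0(C(X))=K_0(C(X))$.

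The first step is to check naturality of $\kappa$, namely that
\[
\lambda_\varphi\circ\kappa_{\Ea_{d,Y}}=\kappa_X\circ\lambda_\varphi^0
\]
as homomorphisms $C(\Ea_{d,Y})\to C(X,K)$. This is immediate from the definitions. For $f\in C(\Ea_{d,Y})$ and $x\in X$, both sides send $f$ to the function $x\mapsto f(h_\varphi(x))e$. Indeed, $\lambda_\varphi=\lambda_\varphi^0\otimes\id_K$ and $\kappa$ tensors with the fixed rank one projection $e$. Functoriality of $RK_*$ then gives
\[
(\lambda_\varphi)_*\circ(\kappa_{\Ea_{d,Y}})_*=(\kappa_X)_*\circ(\lambda^0_\varphi)_*.
\]

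The second step is the computation. By definition of $\zeta$ we have $(\kappa_{\Ea_{d,Y}})_*(\zeta)=(\psi_{Y,d})_*(\xi)$. Applying $(\lambda_\varphi)_*$ and using \cref{lem_eqn_general_theory_diagram} together with functoriality gives
\[
(\kappa_X)_*\bigl(h_\varphi^*(\zeta)\bigr)=(\lambda_\varphi)_*(\kappa_{\Ea_{d,Y}})_*(\zeta)=(\lambda_\varphi)_*(\psi_{Y,d})_*(\xi)=\varphi_*(\xi).
\]
The map $(\kappa_X)_*$ is an isomorphism, since it is the corner isomorphism for compact $X$. Applying its inverse yields the claim.

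The only point requiring care is bookkeeping rather than mathematics. We must confirm that the identification of $RK^0(\Ea_{d,Y})$ with $RK_0(C(\Ea_{d,Y}))$ from \cite[Theorem~3.3(1)]{PhillipsRK} is natural for the map $h_\varphi$, so that $h_\varphi^*$ really corresponds to $(\lambda_\varphi^0)_*$. We also need that $\varphi_*$, computed in ordinary $K$-theory, agrees with the $RK$-induced map used in the diagram. The latter holds since $RK$ agrees with $K$ on C*-algebras, functorially, as recalled above. Finally, $\varphi$ is regarded as landing in $C(X,K)$ through the inclusion $M_\infty\subset K$, and we must note that $h_\varphi$ lands in $\Ea_{d,Y}$, as arranged by the reduction after \cref{lem_general_theory_h_vaprhi}.
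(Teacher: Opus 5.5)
Your proposal is correct and matches the paper's proof: both verify $\lambda_\varphi\circ\kappa_{\Ea_{d,Y}}=\kappa_X\circ\lambda_\varphi^0$, use naturality of $RK_*$ to get the commuting square with $h_\varphi^*$ on the left, and then conclude from the triangle in \cref{lem_eqn_general_theory_diagram} by inverting $(\kappa_X)_*$. Your bookkeeping remarks — naturality of $RK^*(Z)\cong RK_*(C(Z))$, so that $h_\varphi^*$ corresponds to $(\lambda_\varphi^0)_*$, and agreement of $RK$ with $K$ for C*-algebras — are exactly what the paper's proof uses implicitly.
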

To simplify notation, when applying the lemma, we will suppress the corner identification and simply write this
as $h_\varphi^*(\zeta)=\varphi_*(\xi)$.
\begin{proof}[Proof of \cref{lem_pullback_identification}]
We have 
$ \lambda_\varphi\circ \kappa_{\Ea_{d,Y}}=\kappa_X\circ \lambda_{\varphi}^0 $
on scalar-valued functions. Naturality of $RK_*$ therefore gives the
commutative diagram
\[
\xymatrix@C=3.5em{
        RK^*(\Ea_{d,Y}) \ar[r]^-{\left ( \kappa_{\Ea_{d,Y}} \right )_*}_{\cong}
        \ar[d]_{h_\varphi^*}
        & RK_*(C(\Ea_{d,Y},K)) \ar[d]^{(\lambda_\varphi)_*} \\
        K^*(X) \ar[r]^-{(\kappa_X)_*}_{\cong} & K_*(C(X,K)).
}
\]
The conclusion now follows from Lemma~\ref{lem_eqn_general_theory_diagram}.
\end{proof}

We now specialize to $Y=S^{2n}$ and $X=S^{2k}$. Let
\[
        \beta\in \widetilde K^0(S^{2n})
\]
be the reduced Bott class. It has rank zero, and   
$K^0(S^{2n})\cong \Z[1]\oplus\Z\beta$. Define $\eta,\xi \in RK^0(\Ea_{d,S^{2n}})$ by
\[
	\eta
	=\left ( \kappa_{\Ea_{d,S^{2n}}} \right )_*^{-1}
	\left ( \psi_{S^{2n},d} \right )_*([1])
	\;\;\; \text{ and } \;\;\;
	\xi =\left ( \kappa_{\Ea_{d,S^{2n}}} \right )_*^{-1}
	\left ( \psi_{S^{2n},d} \right )_*(\beta)
	.
\]
Since $(\psi_{S^{2n},d})_*$ is injective by
\cref{lem:universal-map-injective}, the classes $\eta$ and $\xi$ span the image
of $K^0(S^{2n})$ in $RK^0(\Ea_{d,S^{2n}})$. 
For a homomorphism $\varphi \colon C(S^{2n}) \to C(S^{2k},K)$ such that $\rank(\varphi(1)) = d$, 
under the identification from Lemma~\ref{lem_pullback_identification}, we have, in $K^0(S^{2k})$,
\[
        h_\varphi^*(\eta)=\varphi_*([1])
        \;\;\; \text{ and } \;\;\;
        h_\varphi^*(\xi)=\varphi_*(\beta) .
\]
The class $\xi$ has rank zero.

The Chern character $\Ch(\xi)$ used below is understood stagewise: on every
compact stage $E_{d,S^{2n},m}$ it is the ordinary rational Chern character of
the restriction of $\xi$, and these classes are compatible under restriction.
Equivalently, for any compact Hausdorff space $T$ and for any continuous map
$g\colon T\to \Ea_{d,S^{2n}}$, one has
\[
        g^* ( \Ch(\xi) )=\Ch(g^* ( \xi ) ).
\]
Thus, when we write $h_\varphi^*( \Ch(\xi) )$, this means
$\Ch(h_\varphi^* ( \xi ) )$ after first pulling $\xi$ back to  
$S^{2k}$. 

\begin{cor}\label{cor:injectivity_Chern_class_criterion}
With the notation above, if the degree $2k$ component of $\Ch(\xi)$ is
decomposable in $\vH^*(\Ea_{d,S^{2n}};\Q)$, then $\varphi_*$ is not rationally
injective on $K_0$.
\end{cor}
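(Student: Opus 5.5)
We argue by contrapositive. Suppose $\varphi\colon C(S^{2n})\to C(S^{2k},K)$ has $\rank(\varphi(1))=d$ and $\varphi_*$ is rationally injective on $K_0$. By the remark after \cref{lem_general_theory_h_vaprhi}, we may assume $h_\varphi$ takes values in $\Ea_{d,S^{2n}}$. We will show that the degree $2k$ component $\Ch_{2k}(\xi)$ is then indecomposable. The tool is \cref{cor:indecomposable_for_maps_from_sphere}, applied to $\eta=\Ch_{2k}(\xi)$ and $h=h_\varphi$. That corollary is stated for compact $X$. However, its proof uses only that $h^*$ is a graded ring homomorphism (\cref{lemma:indecomposables_in_rings}) and that $\vH^{2k}(S^{2k};\Q)$ consists of indecomposables. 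Both hold for the \v{C}ech (equivalently, stagewise) cohomology of $\Ea_{d,S^{2n}}$, since pullback along $h_\varphi$ is multiplicative. Alternatively, $h_\varphi(S^{2k})$ lies in a compact stage $E_{d,S^{2n},m}$, and a decomposition in $\vH^*(\Ea_{d,S^{2n}};\Q)$ restricts to one on that stage. So it suffices to show that $h_\varphi^*(\Ch_{2k}(\xi))\neq 0$.

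First, we check that $\varphi_*(\beta)\neq 0$ in $K^0(S^{2k})\otimes\Q$. Since $\beta\ne 0$ has infinite order in $K^0(S^{2n})\cong\Z^2$, rational injectivity gives $\varphi_*(\beta)\otimes 1\neq 0$.

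Next, we locate $\varphi_*(\beta)$ in the reduced group. The class $\beta$ has rank zero, and $\varphi_*$ is computed via $h_\varphi^*$ on $\xi$, which has rank zero. Therefore $\varphi_*(\beta)$ has virtual rank zero at each point of $S^{2k}$, so $\varphi_*(\beta)\in\widetilde K^0(S^{2k})\cong\Z$.

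Now we apply the Chern character. It is a rational isomorphism $\widetilde K^0(S^{2k})\otimes\Q\to \vH^{2k}(S^{2k};\Q)$, concentrated in degree $2k$. Hence
\[
\Ch\bigl(\varphi_*(\beta)\bigr)=\Ch_{2k}\bigl(\varphi_*(\beta)\bigr)\neq 0 .
\]
By the stagewise definition of $\Ch(\xi)$ and naturality, we have $h_\varphi^*(\Ch(\xi))=\Ch(h_\varphi^*\xi)=\Ch(\varphi_*(\beta))$, using \cref{lem_pullback_identification}. Taking degree $2k$ components, and noting that $h_\varphi^*$ preserves degree, gives $h_\varphi^*(\Ch_{2k}(\xi))\neq 0$. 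By \cref{cor:indecomposable_for_maps_from_sphere}, in the extended form above, $\Ch_{2k}(\xi)$ is indecomposable. This contradicts the hypothesis, so $\varphi_*$ is not rationally injective on $K_0$.

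The main point needing care is extending \cref{cor:indecomposable_for_maps_from_sphere} to the noncompact space $\Ea_{d,S^{2n}}$. We handle it by passing to the compact stage containing $h_\varphi(S^{2k})$ and using that restriction is a graded ring map. The remaining steps are formal.
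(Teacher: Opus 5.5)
Your proof is correct and follows essentially the same route as the paper: rational injectivity forces $\varphi_*(\beta)\neq 0$ in $\widetilde K^0(S^{2k})\otimes\Q$, so $h_\varphi^*$ of the degree $2k$ component of $\Ch(\xi)$ is nonzero, whereas a decomposable class must pull back to zero because $S^{2k}$ has no rational cohomology strictly between degrees $0$ and $2k$. The paper argues directly rather than by contrapositive through \cref{cor:indecomposable_for_maps_from_sphere}, and it leaves implicit the noncompactness issue you address by passing to a compact stage, but the mathematical content is the same.
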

 In particular, $\varphi_*$ is not injective on $K_0$.
\begin{proof}[Proof of \cref{cor:injectivity_Chern_class_criterion}]
	On $S^{2k}$, the ordinary rational Chern
	character identifies $K^0(S^{2k})\otimes\Q$ with
	\[
	\vH^0(S^{2k};\Q)\oplus \vH^{2k}(S^{2k};\Q).
	\]
	The class $\varphi_*([1])$ has nonzero rank $d$, while $\varphi_*(\beta)$ has
	rank zero. Therefore $\varphi_*\otimes \id_{\Q}$ is injective on
	$K_0(C(S^{2n}))\otimes\Q$ if and only if $\varphi_*(\beta)$ is nonzero;
	equivalently, if and only if 
	\[
	h_\varphi^* ( \Ch(\xi) ) \neq 0
	\quad\text{in }\widetilde{\vH}^{2k}(S^{2k};\Q).
	\]
	
Any positive-degree class on $S^{2k}$ has degree $2k$, so every decomposable
class of degree $2k$ pulls back to zero in $\vH^{2k}(S^{2k};\Q)$. If the degree
$2k$ component of $\Ch(\xi)$ is decomposable, then
$h_\varphi^* ( \Ch(\xi) ) =0$. Therefore,
$\varphi_*\otimes \id_{\Q}$ is not injective on $K_0$, and hence $\varphi_*$ is not
injective on $K_0$.
\end{proof}

\section{The main theorem}\label{Sec_5904_General_rank}
\indent
In this section we settle Blackadar's problem, in \cref{thm:rank-d-obstruction}. Its proof involves a computation of the cohomology ring of the homomorphism space $E_{d,Y}$; part of that is done for general spaces $Y$, and for the remaining parts, we specialize to the case of an even sphere. 

Throughout this section, $n\in\N$ is fixed and the domain is the sphere
$S^{2n}$.  The letter $d\in\N$ denotes the rank of the image of the
identity.
Thus a homomorphism
\[
        \varphi\colon C(S^{2n})\longrightarrow C(S^{2k},K)
\]
has rank $d$ if $\varphi(1)$ is a projection of constant rank $d$.

We say that a space $X$ is \emph{rationally acyclic} if 
$\widetilde{\vH}^*(X;\Q)=0$.

The result proved below is the following.

\begin{thm}\label{thm:rank-d-obstruction}
Let $n,d,k\in\N$.  If
\[
        k\notin\{n,n+1,\ldots,n+d-1\},
\]
then:
\begin{enumerate}
	\item\label{thm:rank-d-obstruction_item_1}  There is no homomorphism
	\[
	\varphi\colon C(S^{2n})\longrightarrow C(S^{2k},K)
	\]
	such that $\varphi(1)$ has rank $d$ and $\varphi_*$ is injective on
	$K_0$.  
	\item\label{thm:rank-d-obstruction_item_2} For any $r \in \N$, there is no unital homomorphism
	\[
	C(S^{2n} , M_r) \longrightarrow C(S^{2k},M_{dr})
	\]
	which is injective on $K_0$.
\end{enumerate}
\end{thm}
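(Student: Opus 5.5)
The plan is to apply \cref{cor:injectivity_Chern_class_criterion}. For that I need to compute $\vH^*(\Ea_{d,S^{2n}};\Q)$ as a ring together with the class $\Ch(\xi)$, and then check that the degree $2k$ component is decomposable when $k\notin\{n,\ldots,n+d-1\}$.

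\emph{Step 1: the cohomology of the homomorphism space for general $Y$.} Work stagewise on the compact spaces $E_{d,Y,m}$, and pass to $\Ea_{d,Y}$ at the end using continuity of \v{C}ech cohomology and a $\lim^1$ argument; the $\lim^1$ term vanishes because the rational cohomology will stabilize in each degree. Set $W=Y^d\times\fl(1,2,\ldots,d,\infty)$. As in the proof of \cref{lem_E_alg_countably_compactly_generated}, there is a surjection $\pi\colon W\to \Ea_{d,Y}$ given by $(y,q)\mapsto\bigl(f\mapsto\sum_j f(y_j)q_j\bigr)$. It is invariant under the diagonal action of the symmetric group $S_d$, which permutes the $y_j$ and the $q_j$ simultaneously. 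I would show that the induced map $\bar\pi\colon W/S_d\to \Ea_{d,Y}$ has rationally acyclic fibers.

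To see this, take $\omega=\sum_i f(w_i)P_i$ with distinct points $w_i$ and $\rank(P_i)=m_i$. The fiber of $\pi$ over $\omega$ is a union, indexed by the ways of distributing the indices, of copies of $\prod_i\Fl_{m_i}$. By \cref{lem:transitive-components}, the fiber of $\bar\pi$ is $\prod_i \Fl_{m_i}/S_{m_i}$. Combining \cref{thm:cohomology-quotient} with \cref{prp:finite-complete-flag-cohomology}, its rational cohomology is $\bigotimes_i\bigl(\Q[z_1,\ldots,z_{m_i}]/(e_1,\ldots,e_{m_i})\bigr)^{S_{m_i}}=\Q$. The invariants of the coinvariant algebra are trivial because every positive-degree symmetric polynomial lies in the ideal generated by the $e_j$.

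A Vietoris--Begle argument for the proper surjection $\bar\pi$ on compact stages, followed by \cref{thm:cohomology-quotient} and \cref{thm:stable-flag-cohomology}, then gives
\[
\vH^*(\Ea_{d,Y};\Q)\cong\bigl(\Q[z_1,\ldots,z_d]\otimes \vH^*(Y;\Q)^{\otimes d}\bigr)^{S_d}.
\]
I expect this step to be the main obstacle. The delicate points are the fiber identification near collisions of points, which needs the right quotient topology so that $\bar\pi$ is a homeomorphism onto each compact stage modulo acyclic fibers, and compatibility with the direct limit.

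\emph{Step 2: $Y=S^{2n}$ and the Chern character.} Here $\vH^*(S^{2n};\Q)=\Q[a]/(a^2)$ with $\deg a=2n$, and the class $a$ can be normalized so that $\Ch(\beta)=a$. Write $a_j$ for $a$ pulled back from the $j$-th factor. Pulled back along $\pi$, the class $\psi_{S^{2n},d}(\beta)$ becomes $\sum_j L_j\otimes y_j^*\beta$. Hence
\[
\pi^*\Ch(\xi)=\sum_{j=1}^d e^{z_j}a_j,
\]
and its degree $2k$ component is $\frac{1}{(k-n)!}P_{k-n}$, where $P_m=\sum_j z_j^m a_j$, when $k\geq n$; it is $0$ when $k<n$. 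The zero class is decomposable, so the case $k<n$ is done.

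\emph{Step 3: decomposability for $m=k-n\geq d$.} Each $z_j$ is a root of $\prod_i(t-z_i)$, so $z_j^d=\sum_{i=1}^d(-1)^{i+1}e_iz_j^{d-i}$. Multiplying by $z_j^{m-d}a_j$ and summing over $j$ gives
\[
P_m=\sum_{i=1}^d(-1)^{i+1}e_iP_{m-i}.
\]
Here every $e_i$ and every $P_{m-i}$ is $S_d$-invariant, and every $e_i$ has positive degree. So $P_m$ is decomposable in the invariant ring, which by Step 1 is $\vH^*(\Ea_{d,S^{2n}};\Q)$. \cref{cor:injectivity_Chern_class_criterion} then proves part~\eqref{thm:rank-d-obstruction_item_1}.

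For part~\eqref{thm:rank-d-obstruction_item_2}, let $\Phi$ be such a unital homomorphism and restrict it to the corner $C(S^{2n})\otimes e_{11}$. This gives a homomorphism $C(S^{2n})\to C(S^{2k},K)$ whose unit has rank $dr/r=d$. The corner inclusion $C(S^{2n})\to C(S^{2n},M_r)$ is an isomorphism on $K_0$, so injectivity on $K_0$ is preserved, contradicting part~\eqref{thm:rank-d-obstruction_item_1}.
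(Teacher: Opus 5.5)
Your proposal is correct and follows the paper's proof essentially step for step. The shared steps are:
\begin{itemize}
\item Vietoris--Begle for $\bar\pi$, whose fibers $\prod_i \Fl_{m_i}/S_{m_i}$ are rationally acyclic by averaging in the coinvariant algebra.
\item The identification of $\vH^*(\Ea_{d,S^{2n}};\Q)$ with the $S_d$-fixed ring.
\item The pullback $\sum_j a_j e^{z_j}$ of $\Ch(\xi)$.
\item The recurrence $P_m=\sum_i(-1)^{i+1}e_iP_{m-i}$ for $m\ge d$.
\item The corner reduction for part~(2).
\end{itemize}
The only difference is technical: the paper applies Vietoris--Begle directly on the direct limit after proving that $\bar\pi^{\mathrm{alg}}$ is closed (\cref{lem:algebraic-quotient-map-closed}), whereas you work on compact stages, which is fine. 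In fact your limit passage is unnecessary, since $h_\varphi(S^{2k})$ lies in a single stage $E_{d,S^{2n},M}$ and the recurrence already gives decomposability in $\vH^*(E_{d,S^{2n},M};\Q)$.
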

Before getting to the proof, we need to compute the cohomology ring of the appropriate space of homomorphisms. This is done in several steps.
\subsection{The rank $d$ parameter space}

Let $X$ be a compact Hausdorff space.  Recall from Definition~\ref{def:E_d,y} that
\begin{equation}
   E_{d,X}=\{\omega \colon C(X)\to K \mid
\omega \hbox{ is a homomorphism and }
\rank(\omega(1))=d\},
\end{equation}
with the topology of pointwise norm convergence.  Also recall the algebraic model
\begin{equation}
   \Ea_{d,X}=\{\omega \colon C(X)\to M_\infty \mid
        \omega \hbox{ is a homomorphism and }
        \rank(\omega(1))=d\},
\end{equation}
with the direct limit topology.  By \cref{lem:E_alg_homotopy_equivalence},
the inclusion $\Ea_{d,X}\to E_{d,X}$ is a weak homotopy equivalence.  The
universal $K$-theory construction used below is the one from
Definition~\ref{dfn_general_theory_def_of_psi}; the explicit cohomology calculation is
made on the ordered flag model described next.

\begin{ntn}
	Write for short
\[
   \Fl_d^\infty=\fl(1,2,\ldots,d,\infty) \;\;\; \text{ and }\;\;\; \Fl_d^{\infty,\alg}=\fl^{\alg}(1,2,\ldots,d,\infty) .
\]
\end{ntn}
  Recall from \cref{thm:stable-flag-cohomology}
that their rational cohomology is the polynomial algebra
\[
   \vH^*(\Fl_d^\infty;\Q)\cong \Q[\zeta_1,\ldots,\zeta_d],
        \qquad \deg(\zeta_i)=2,
\]
where $\zeta_i$ is the first Chern class of the $i$-th tautological line
bundle.  The weak equivalence between the compact-operator and
algebraic versions of the flag space is \cref{lem:alg_flag_manifold}.

\begin{ntn}
Set
\[
   P_d(X)=X^d\times \Fl_d^\infty \;\;\; \text{ and } \;\;\;
      P_d^{\mathrm{alg}}(X)=X^d\times \Fl_d^{\infty,\alg} .
\]
There is a natural quotient map
\[
   \pi\colon P_d(X)\longrightarrow E_{d,X}
\]
defined, for $f\in C(X)$, by
\begin{equation*}
   \pi(x_1,\ldots,x_d;q_1,\ldots,q_d)(f)
      =\sum_{j=1}^d f(x_j)q_j .
\end{equation*}
There is also the algebraic version
\[
   \pi^{\mathrm{alg}}\colon P_d^{\mathrm{alg}}(X)\longrightarrow \Ea_{d,X},
\]
given by the same formula.
The symmetric group $S_d$ acts on $P_d(X)$ and $P_d^{\mathrm{alg}}(X)$ by
simultaneous permutation of the points and projections: for $\sigma \in S_d$,
\[
    \quad \sigma\cdot (x_1,\ldots,x_d;q_1,\ldots,q_d)
    = (x_{\sigma^{-1}(1)},\ldots,x_{\sigma^{-1}(d)};
       q_{\sigma^{-1}(1)},\ldots,q_{\sigma^{-1}(d)}) .
\]
\end{ntn}
\begin{lem}
	The maps $\pi$ and $\pi^{\mathrm{alg}}$  descend to surjective maps
\[
   \bar{\pi}\colon P_d(X)/S_d\longrightarrow E_{d,X}
   \quad\text{and}\quad
   \bar{\pi}^{\mathrm{alg}}\colon P_d^{\mathrm{alg}}(X)/S_d\longrightarrow \Ea_{d,X}.
\]
\end{lem}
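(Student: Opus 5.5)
The lemma has three parts. We check that $\pi$ and $\pi^{\mathrm{alg}}$ land in the right spaces. We check that they are constant on $S_d$-orbits. Then we show surjectivity, by diagonalizing an arbitrary homomorphism $\omega\colon C(X)\to K$ (respectively $M_\infty$) with $\rank(\omega(1))=d$.

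\emph{Well-definedness.} Let $(x_1,\ldots,x_d;q_1,\ldots,q_d)\in P_d(X)$. The $q_j$ are mutually orthogonal rank one projections, so $f\mapsto \sum_j f(x_j)q_j$ is a $*$-homomorphism $C(X)\to K$. It is linear, and it is multiplicative because $q_jq_l=\delta_{jl}q_j$. It sends $1$ to $\sum_j q_j$, which has rank $d$. If all $q_j$ lie in $M_\infty$, the image lies in $M_\infty$, so $\pi^{\mathrm{alg}}$ lands in $\Ea_{d,X}$.

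\emph{Invariance.} For $\sigma\in S_d$, the sum $\sum_j f(x_{\sigma^{-1}(j)})q_{\sigma^{-1}(j)}$ is just a reindexing of $\sum_j f(x_j)q_j$, so $\pi(\sigma\cdot z)=\pi(z)$. Hence $\pi$ factors through the set-theoretic quotient, and by the universal property of the quotient topology the factored map $\bar\pi$ is continuous. Continuity of $\pi$ itself is in the pointwise norm topology: for fixed $f$, the map $(x,q)\mapsto\sum_j f(x_j)q_j$ is norm continuous. In the algebraic case we also need continuity for the direct limit topology. This holds because $P_d^{\mathrm{alg}}(X)$ carries the direct limit topology of $X^d\times\fl(1,\ldots,d,m)$. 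Each such stage maps continuously into $E_{d,X,m}$, and a map out of a direct limit is continuous when its restrictions to the stages are. I would just note that a product of the compact space $X^d$ with a direct limit of compact Hausdorff spaces is again the direct limit of the products, which is standard for compact factors.

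\emph{Surjectivity.} This is the only substantive step. Let $\omega\in E_{d,X}$ and set $p=\omega(1)$, a rank $d$ projection. Then $\omega$ is a unital homomorphism $C(X)\to pKp\cong M_d$, so its image is a commutative C*-subalgebra of $M_d$. Such an algebra is simultaneously unitarily diagonalizable. So there are mutually orthogonal rank one projections $q_1,\ldots,q_d$ with sum $p$, each commuting with the image of $\omega$. Each $f\mapsto q_j\omega(f)q_j$ is then a character of $C(X)$, and every character of $C(X)$ is evaluation at some point $x_j\in X$. Therefore $\omega(f)=\sum_j f(x_j)q_j$, that is, $\omega=\pi(x_1,\ldots,x_d;q_1,\ldots,q_d)$.

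If $\omega\in\Ea_{d,X}$, then $p$ and the image of $\omega$ lie in some $M_m$, so the $q_j$ can be chosen in $M_m$. This gives a preimage in $P_d^{\mathrm{alg}}(X)$.

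I expect no real obstacle here. The only points needing care are the identification of characters with point evaluations and the continuity of $\pi^{\mathrm{alg}}$ with respect to the direct limit topology.
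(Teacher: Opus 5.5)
Your proposal is correct and follows essentially the same route as the paper. Both arguments write the finite-rank representation $\omega$ of the commutative algebra $C(X)$ as $\sum_j f(x_j)q_j$ with orthogonal rank-one $q_j$, and note that these lie in $M_m$ in the algebraic case. The paper first groups by distinct points $y_r$ with projections $p_r$ and then splits each $p_r$, whereas you diagonalize directly, identify each compression $q_j\omega(\cdot)q_j$ as a character, and also supply the continuity checks that the paper leaves implicit.
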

\begin{proof}
	The maps $\pi$ and $\pi^{\mathrm{alg}}$ are constant on $S_d$-orbits, and therefore induce well-defined maps on the  quotient spaces.
We now show that these maps are surjective. If
$\omega\in E_{d,X}$, then, since $C(X)$ is commutative and $\omega(1)$ has
finite rank, $\omega$ is a finite-dimensional representation of $C(X)$.  Hence
there are distinct points $y_1,y_2,\ldots,y_l\in X$ and pairwise orthogonal
projections $p_1,p_2,\ldots,p_l$ such that
\begin{equation}\label{eq:omega-def}
   \omega(f)=\sum_{r=1}^l f(y_r)p_r 
     \; \text{ for } \; f\in C(X).
\end{equation}
Writing $s_r=\rank(p_r)$, we have $s_1+\cdots+s_l=d$.  Choosing an ordered
orthogonal decomposition of each $p_r$ into $s_r$ rank one projections gives a
point in $P_d(X)$ which maps to $\omega$.

If $\omega \in \Ea_{d,X}$, then $\omega(1) \in M_{\infty}$. Therefore there exists $m \in \N$ such that $\omega(1) \in M_m$, so we get a point of $P_d^{\mathrm{alg}}(X)$.
\end{proof}

\begin{dfn}\label{dfn:collision-type}
Let $\omega\in E_{d,X}$, and with the notation as in \cref{eq:omega-def}, write
\[
   \omega(f)=\sum_{r=1}^l f(y_r)p_r.
\]
Set $s_r=\rank(p_r)$ for $r=1,2,\ldots,l$. If the projections are ordered so that $s_1 \geq s_2 \geq \cdots \geq s_{l}$, the tuple
$ (s_1,\ldots,s_l) $
is called the \emph{collision type} of $\omega$.  
\end{dfn}

\subsection{Flag manifolds appearing in the fibers}

We recall notation from \cref{Sec_5904_Prelim}.
We use two different kinds of flag manifolds.  The space $\Fl_d^\infty$ above 
is infinite
dimensional; by \cref{thm:stable-flag-cohomology}, its rational cohomology is a
polynomial algebra, with no elementary-symmetric-polynomial relations. We also
need finite-dimensional complete flag manifolds. For $s\in\N$, write
\[
       \Fl_s=\fl(1,2,\ldots,s)=U(s)/\T^s.
\]
We use the notation from \cref{prp:finite-complete-flag-cohomology}: the
classes $z_1,\ldots,z_s\in H^2(\Fl_s;\Q)$ are the first Chern classes of
the tautological line bundles, and $e_j$ denotes the $j$-th elementary symmetric
polynomial in $z_1,\ldots,z_s$. Thus the finite flag relations are
$e_1=\cdots=e_s=0$.  

\begin{exa}
	We work out the details for what happens in the rank four case with 
	collision type $(2,1,1)$.

Let $d=4$, let $x,y,z\in X$ be distinct, and let
$p_x,p_y,p_z \in K$ be pairwise orthogonal projections with $\rank(p_x)=2, \rank(p_y)=1$ and $\rank(p_z)=1$.
Let
\[
\omega(f)=f(x)p_x+f(y)p_y+f(z)p_z.
\]
This point of $E_{4,X}$ has collision type $(2,1,1)$.

Consider the ordered support tuple
\[
t_0=(x,x,y,z).
\]
The stabilizer of $t_0$ in $S_4$ is
\[
\Stab_{S_4}(t_0)=\{1,(12)\}\cong S_2.
\]
The other elements of $S_4$ do not preserve this ordered support tuple.  They
carry it to one of the other ordered tuples in the orbit of $(x,x,y,z)$, such as
\[
(x,y,x,z),\qquad (x,y,z,x),\qquad (y,x,x,z),
\]
and so on.  The orbit has
\[
\frac{4!}{2!1!1!}=12
\]
elements.

For each ordered tuple $t=(t_1,t_2,t_3,t_4)$ in this orbit, define
\[
F_t=\{(x_1,x_2,x_3,x_4;q_1,q_2,q_3,q_4)
\in \pi^{-1}(\omega) \; \mid \; (x_1,x_2,x_3,x_4) = (t_1,t_2,t_3,t_4) \}.
\]
The allowed choices of $(q_1,q_2,q_3,q_4)$ are $4$-tuples of orthogonal rank $1$ projections satisfying
\[
\sum_{\{j \mid t_j=x\}}q_j=p_x,\qquad
\sum_{\{j \mid t_j=y\}}q_j=p_y,
\; \; \text{ and } \; \;
\sum_{\{j \mid t_j=z\}}q_j=p_z.
\]
Then
\[
\pi^{-1}(\omega)=\coprod_{t\in S_4\cdot t_0}F_t.
\]
For the chosen tuple $t_0=(x,x,y,z)$, these conditions are
\[
q_1+q_2=p_x,
\qquad q_3=p_y,
\qquad q_4=p_z.
\]
The only freedom is the ordered
orthogonal decomposition of the rank $2$ projection $p_x$.  Hence
\[
F_{t_0}\cong \Fl_2 \cong \mathbb{C}P^1.
\]
The stabilizer $S_2=\{1,(12)\}$ acts on $F_{t_0}$ by switching the two 
projections:
\[
(q_1,q_2,q_3,q_4)\longmapsto(q_2,q_1,q_3,q_4).
\]
Every element of $S_4$ not in this stabilizer carries $F_{t_0}$ to a different
component $F_t$.  Therefore the quotient of the full ordered fiber by $S_4$ is
\begin{equation}\label{eq:rank4-211-quotient}
\bar{\pi}^{-1}(\omega)
=\pi^{-1}(\omega)/S_4
\cong F_{t_0}/\Stab_{S_4}(t_0)
\cong \Fl_2 /S_2.
\end{equation}
In this example $\Fl_2 /S_2\cong \mathbb RP^2$, so it has rational
cohomology $\Q$ in degree zero and no positive degree rational cohomology.  The
ordered fiber, by contrast, is a disjoint union of $12$ copies of
$\mathbb{C}P^1$.  Thus the ordered fiber is very far from rationally acyclic,
while the quotient fiber is rationally acyclic.
\end{exa}

\begin{lem}\label{lem:general-quotient-fiber}
Suppose $\omega\in E_{d,X}$ has collision type $(s_1,\ldots,s_l)$, that is, $	s_1+\cdots+s_l=d$,
and there are distinct points $y_1,y_2,\ldots,y_l$ and pairwise orthogonal projections $p_1,p_2,\ldots,p_l$ such that $\rank(p_r) = s_r$ for $r=1,2,\ldots,l$ and 
\begin{equation}\label{eq:eta-decomposition}
	\omega(f)=\sum_{r=1}^l f(y_r)p_r .
\end{equation}
Then
\begin{equation}\label{eq:general-quotient-fiber}
	\bar{\pi}^{-1}(\omega)
	\cong
	\prod_{r=1}^l ( \Fl_{s_r} /S_{s_r} ) .
\end{equation}
\end{lem}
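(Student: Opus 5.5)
The plan is to generalize the rank four, collision type $(2,1,1)$ example directly, using \cref{lem:transitive-components}. First, I will identify the ordered fiber $\pi^{-1}(\omega)$. A point $(x_1,\ldots,x_d;q_1,\ldots,q_d)\in P_d(X)$ maps to $\omega$ if and only if
\[
\sum_j f(x_j)q_j=\sum_r f(y_r)p_r
\]
for all $f\in C(X)$. Since the $q_j$ are mutually orthogonal rank one projections, evaluating at functions that separate the finitely many points involved gives two conditions. Each $x_j$ lies in $\{y_1,\ldots,y_l\}$, and for each $r$ we have $\sum_{\{j\mid x_j=y_r\}}q_j=p_r$. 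Comparing ranks, exactly $s_r$ indices satisfy $x_j=y_r$.

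Hence the ordered support tuple $t=(x_1,\ldots,x_d)$ ranges over the finite set $I$ of tuples in which $y_r$ appears exactly $s_r$ times. This gives
\[
\pi^{-1}(\omega)=\coprod_{t\in I}F_t,
\]
where $F_t$ is the set of points of the fiber with support tuple $t$.

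Next I check the hypotheses of \cref{lem:transitive-components}. Each $F_t$ is closed and open in the fiber, because the map $\pi^{-1}(\omega)\to I$ sending a point to its support tuple is continuous into the discrete set $I$. Indeed, the coordinates $x_j$ vary continuously but take values in a finite discrete subset of $X$. The $S_d$ action permutes $I$ transitively, since any two arrangements of the same multiset differ by a permutation, and the projection to $I$ is equivariant.

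Take the base tuple
\[
t_0=(y_1,\ldots,y_1,y_2,\ldots,y_2,\ldots,y_l,\ldots,y_l),
\]
with $y_r$ repeated $s_r$ times. Its stabilizer is the Young subgroup $H=S_{s_1}\times\cdots\times S_{s_l}$. By \cref{lem:transitive-components},
\[
\pi^{-1}(\omega)/S_d\cong F_{t_0}/H.
\]
On $F_{t_0}$ the only remaining data are the ordered orthogonal decompositions of each $p_r$ into $s_r$ rank one projections. Thus
\[
F_{t_0}\cong\prod_{r=1}^l\Fl(1,\ldots,s_r)\cong\prod_r\Fl_{s_r},
\]
where each factor is computed inside the finite dimensional range of $p_r$. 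The factor $S_{s_r}$ of $H$ acts only on the $r$-th factor, by permuting the rank one projections. Therefore
\[
F_{t_0}/H\cong\prod_r(\Fl_{s_r}/S_{s_r}).
\]
This last step uses that a quotient of a product of compact Hausdorff spaces by a product of finite groups acting factorwise is the product of the quotients; for compact Hausdorff spaces the quotient maps are closed and everything is fine.

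The main point requiring care is identifying $\bar\pi^{-1}(\omega)$, which is a subspace of $P_d(X)/S_d$, with $\pi^{-1}(\omega)/S_d$ carrying its own quotient topology. Since $S_d$ is finite, the quotient map $P_d(X)\to P_d(X)/S_d$ is open. Moreover, $\pi^{-1}(\omega)$ is a closed $S_d$-invariant subset, so the restricted quotient map onto its image is again an open surjection. Hence the two topologies agree.

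A secondary point is that the orthogonality and rank conditions force exactly the description above. In particular, no $x_j$ can lie outside $\{y_r\}$, because otherwise $\omega(f)$ would have a nonzero component depending on $f$ away from the $y_r$.
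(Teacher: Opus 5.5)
Your proposal is correct and follows essentially the same route as the paper: you decompose the ordered fiber $\pi^{-1}(\omega)$ into clopen components indexed by support tuples (the paper indexes them by the equivalent functions $\rho\colon\{1,\ldots,d\}\to\{1,\ldots,l\}$), apply \cref{lem:transitive-components} with the Young subgroup $S_{s_1}\times\cdots\times S_{s_l}$ as stabilizer, and identify one component with $\prod_r \Fl_{s_r}$, on which the stabilizer acts factorwise. You also justify several points the paper leaves implicit: why the fiber has exactly this form, why the components are clopen, and why the subspace topology on $\bar\pi^{-1}(\omega)$ agrees with the quotient topology on $\pi^{-1}(\omega)/S_d$ (for this last point, $S_d$-invariance of $\pi^{-1}(\omega)$ is what matters; closedness is not needed).
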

\begin{proof}
Let
$\rho\colon \{1,\ldots,d\} \to  \{1,\ldots,l\}$
be a function such that
\[
\card(\rho^{-1}(r))=s_r
\quad \text{ for } r=1,\ldots,l.
\]
The function $\rho$ records which ordered coordinates are assigned to the
point $y_r$.
Define
\begin{align*}
	F_\rho
	=\Bigl \{ &(y_{\rho(1)},\ldots,y_{\rho(d)};q_1,\ldots,q_d)
	\in P_d(X) \mid                                  \\
	&\hspace{1cm}
	\sum_{j\in\rho^{-1}(r)}q_j=p_r
	\hbox{ for } r=1,\ldots,l \Bigr \}.
\end{align*}
Then
\begin{equation}\label{eq:ordered-fiber-disjoint-union}
   \pi^{-1}(\omega)=\coprod_{\rho} F_\rho,
\end{equation}
where $\rho$ ranges over all functions with the displayed cardinality
conditions.  There are
\[
\dfrac{d!}{s_1!s_2!\cdots s_l!}
\]
such functions.

For a fixed $\rho$, the equation
$   \sum_{j\in\rho^{-1}(r)}q_j=p_r $
says that the rank $s_r$ projection $p_r$ is decomposed into $s_r$ rank $1$ 
projections, indexed by the set $\rho^{-1}(r)$, listed in a given order.  
Therefore
\begin{equation}\label{eq:F-rho-product-flags}
       F_\rho\cong \prod_{r=1}^l \Fl_{s_r}.
\end{equation}

Choose one such function $\rho_0$.  Its stabilizer in $S_d$ is the 
subgroup
\begin{equation}\label{eq:young-stabilizer}
   H_{\rho_0}=\Stab_{S_d}(\rho_0)
      =\prod_{r=1}^l S_{\rho_0^{-1}(r)}
      \cong S_{s_1}\times\cdots\times S_{s_l}.
\end{equation}
The group $S_d$ acts transitively on the components $F_\rho$ in
\eqref{eq:ordered-fiber-disjoint-union}.  Therefore, by
\cref{lem:transitive-components},
\begin{equation}\label{eq:quotient-fiber-one-component}
   \bar{\pi}^{-1}(\omega)
       =\pi^{-1}(\omega)/S_d
       \cong F_{\rho_0}/H_{\rho_0}.
\end{equation}
Under the identification \eqref{eq:F-rho-product-flags}, the group
$H_{\rho_0}\cong S_{s_1}\times\cdots\times S_{s_l}$ acts factor by factor.
\end{proof}

\begin{rmk}\label{rmk:no-extra-symmetric-groups}
There is no extra symmetric group permuting two distinct collision blocks of
the same size.  For example, in collision type $(2,2)$, the two rank two
blocks lie over two distinct points of $X$.  The stabilizer of one ordered
component preserves the block over the first point and the block over the
second point separately, so the stabilizer is $S_2\times S_2$, not
$(S_2\times S_2)\rtimes S_2$.
\end{rmk}

\subsection{Rational acyclicity of the unordered fibers}

\begin{lem}\label{lem:block-quotient-rational-acyclic}
For every $s\in\N$, the quotient $\Fl_s /S_s$ is rationally acyclic; that is,
$\widetilde{\vH}^*(\Fl_s /S_s;\Q)=0$.
\end{lem}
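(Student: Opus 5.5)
By \cref{thm:cohomology-quotient} applied to the compact manifold $\Fl_s$ and the finite group $S_s$ (with $F=\Q$), we have $\vH^*(\Fl_s/S_s;\Q)\cong \vH^*(\Fl_s;\Q)^{S_s}$. So the plan is to show that the $S_s$-invariant part of $\vH^*(\Fl_s;\Q)$ is $\Q$, concentrated in degree zero.

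\textbf{Step 1: the action on generators.} First check how $S_s$ acts on the presentation in \cref{prp:finite-complete-flag-cohomology}. A permutation $\sigma$ acts on $\Fl_s$ by permuting the tuple $(q_1,\ldots,q_s)$. It therefore pulls back the tautological line bundle $L_i$ to $L_{\sigma^{-1}(i)}$, up to the convention on $\sigma$ versus $\sigma^{-1}$. Hence $\sigma^*$ permutes the classes $z_1,\ldots,z_s$, exactly as in \cref{exa:forgetful_12}. Consequently the isomorphism
\[
\vH^*(\Fl_s;\Q)\cong \Q[z_1,\ldots,z_s]/(e_1,\ldots,e_s)
\]
is $S_s$-equivariant, with $S_s$ acting on the polynomial ring by permuting the variables. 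The ideal $I=(e_1,\ldots,e_s)$ is $S_s$-stable.

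\textbf{Step 2: invariants of the quotient ring.} Since $\card(S_s)$ is invertible in $\Q$, the averaging operator
\[
\Avg=\frac{1}{s!}\sum_{\sigma\in S_s}\sigma
\]
is a $\Q$-linear projection onto invariants, and it commutes with the quotient map. Taking invariants is therefore exact, giving
\[
\bigl(\Q[z]/I\bigr)^{S_s}=\Q[z]^{S_s}/I^{S_s}.
\]
This step carries the essential content; the rest is bookkeeping. In particular, if $\bar a$ is an invariant class, then $\bar a=\overline{\Avg(a)}$, so every invariant class lifts to an invariant polynomial.

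\textbf{Step 3: invariant polynomials of positive degree lie in $I$.} By the fundamental theorem of symmetric polynomials, $\Q[z]^{S_s}=\Q[e_1,\ldots,e_s]$. Every symmetric polynomial of positive degree is a polynomial in the $e_j$ without constant term, and so lies in $I$. Hence the invariant classes of positive degree vanish, giving
\[
\widetilde{\vH}^*(\Fl_s;\Q)^{S_s}=0,
\]
so $\Fl_s/S_s$ is rationally acyclic.

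The only point needing real care is the first step, namely that the permutation action is continuous on $\Fl_s$ and acts on the $z_i$ by permutation. This follows directly from naturality of first Chern classes under pullback of the tautological bundles.
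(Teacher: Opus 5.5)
Your proposal is correct and follows essentially the same route as the paper. Both reduce via \cref{thm:cohomology-quotient} to the $S_s$-invariants of $\Q[z_1,\ldots,z_s]/(e_1,\ldots,e_s)$, use averaging to lift an invariant class to a symmetric polynomial, and then observe that every positive-degree symmetric polynomial lies in the ideal $(e_1,\ldots,e_s)$.
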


\begin{proof}
By \cref{thm:cohomology-quotient}, we have 
\begin{equation}\label{eq:transfer-fixed-points}
       \vH^*(\Fl_s /S_s;\Q)
       \cong \vH^*(\Fl_s;\Q)^{S_s}.
\end{equation}
The group $S_s$ acts on $\Fl_s$ by permuting the $s$ rank one
projections, and therefore acts on \eqref{eq:finite-flag-cohomology} by permuting
$z_1,\ldots,z_s$.

With $e_1,e_2,\ldots,e_s$ denoting the first $s$ elementary 
symmetric polynomials, let
$ R=\Q[z_1,\ldots,z_s]$,       
and let $I$ be the ideal generated by $e_1,e_2,\ldots,e_s$.
Then $\vH^*(\Fl_s ;\Q)\cong R/I$.  Let $f\in R$ and assume that $f+I$ is fixed by
$S_s$.  Average $f$ over $S_s$:
\[
       \Avg(f)=\frac{1}{s!}\sum_{\sigma\in S_s}\sigma f.
\]
Since $f+I$ is fixed in $R/I$, we have $f+I=\Avg(f) + I$.  The polynomial
$\Avg(f)$ is symmetric, so $ \Avg(f)\in R^{S_s}=\Q[e_1,\ldots,e_s]$.
Every positive degree symmetric polynomial is in $I$. Thus $\Avg(f)$ is the sum of a constant term and an element of $I$, so the image of $f$ in $R/I$ is in the degree $0$ part.
\end{proof}

\begin{cor}\label{cor:all-fibers-rational-acyclic}
For every $\omega\in E_{d,X}$, the fiber $\bar{\pi}^{-1}(\omega)$ is rationally 
acyclic.
\end{cor}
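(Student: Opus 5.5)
The plan is to combine \cref{lem:general-quotient-fiber} with \cref{lem:block-quotient-rational-acyclic} via a Künneth argument for the finite product of quotient flag manifolds.

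Fix $\omega\in E_{d,X}$ and write it as in \eqref{eq:eta-decomposition}, with collision type $(s_1,\ldots,s_l)$. By \cref{lem:general-quotient-fiber} we have a homeomorphism
\[
\bar{\pi}^{-1}(\omega)\cong \prod_{r=1}^l ( \Fl_{s_r}/S_{s_r} ).
\]
So it suffices to show that a finite product of rationally acyclic spaces of this form is rationally acyclic.

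Each factor $\Fl_{s}/S_{s}$ is the quotient of a compact smooth manifold by a finite group acting smoothly, so it is a compact Hausdorff space. More to the point, it admits a finite CW (indeed simplicial) structure, since finite group quotients of compact smooth manifolds are triangulable. Therefore \v{C}ech cohomology agrees with singular cohomology for each factor and for the product. Moreover, the rational cohomology of each factor is finite dimensional; by \cref{lem:block-quotient-rational-acyclic} it is $\Q$ in degree $0$. The Künneth theorem over the field $\Q$ then gives
\[
\vH^*\Bigl(\prod_{r=1}^l \Fl_{s_r}/S_{s_r};\Q\Bigr)\cong \bigotimes_{r=1}^l \vH^*(\Fl_{s_r}/S_{s_r};\Q)\cong \Q
\]
concentrated in degree $0$, which is the claim.

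The only point requiring care is justifying Künneth for \v{C}ech cohomology. To avoid triangulation issues, there is an alternative route. The product $\prod_r \Fl_{s_r}/S_{s_r}$ is the quotient of the compact manifold $\prod_r \Fl_{s_r}$ by the finite group $H=\prod_r S_{s_r}$. Applying \cref{thm:cohomology-quotient} directly gives
\[
\vH^*\Bigl(\prod_r\Fl_{s_r};\Q\Bigr)^{H}.
\]
By the ordinary Künneth theorem for the manifold, this equals
\[
\Bigl(\bigotimes_r \vH^*(\Fl_{s_r};\Q)\Bigr)^{\prod_r S_{s_r}}=\bigotimes_r \vH^*(\Fl_{s_r};\Q)^{S_{s_r}},
\]
because invariants of a product group acting factorwise on a tensor product of $\Q$-vector spaces are the tensor product of the invariants; this follows by applying the averaging projections factor by factor. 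Each tensor factor is $\Q$ by the computation in the proof of \cref{lem:block-quotient-rational-acyclic}. I expect this second route to be the cleanest, and it is the one I would write.
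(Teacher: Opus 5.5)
Your proposal is correct. Your first route is the paper's proof: it combines \cref{lem:general-quotient-fiber} with \cref{lem:block-quotient-rational-acyclic} and applies the K\"unneth theorem to the product of the orbit spaces $\Fl_{s_r}/S_{s_r}$, with no further justification.

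The route you say you would write differs slightly.
\begin{itemize}
\item You start from the intermediate description $F_{\rho_0}/H_{\rho_0}$ in the proof of \cref{lem:general-quotient-fiber}.
\item You apply \cref{thm:cohomology-quotient} once, to the compact manifold $\prod_r \Fl_{s_r}$ with $H=\prod_r S_{s_r}$.
\item You use K\"unneth only for a product of manifolds. There \v{C}ech and singular cohomology agree, and naturality of the cross product makes the isomorphism $H$-equivariant.
\item You pass invariants through the tensor product, since the averaging idempotent for $H$ is the tensor product of the averaging idempotents for the $S_{s_r}$.
\end{itemize}

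What this buys you is a cleaner K\"unneth step: you never need K\"unneth for \v{C}ech cohomology of the orbit spaces. The paper implicitly relies on that, via triangulability of $\Fl_s/S_s$ or K\"unneth for \v{C}ech cohomology of compact spaces with field coefficients. The cost is the small algebraic fact about invariants of a product group. You also use the computation inside the proof of \cref{lem:block-quotient-rational-acyclic}, namely that $\vH^*(\Fl_s;\Q)^{S_s}$ is $\Q$ in degree $0$, rather than its statement. The paper's version is shorter and uses both lemmas as black boxes.
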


\begin{proof}
Let $(s_1,\ldots,s_l)$ be the collision type of $\omega$.  By
Lemma \ref{lem:general-quotient-fiber},
\[
       \bar{\pi}^{-1}(\omega)
       \cong
       \prod_{r=1}^l\bigl(\Fl_{s_r} /S_{s_r}\bigr).
\]
By \cref{lem:block-quotient-rational-acyclic}, each factor has rational
\v{C}ech cohomology $\Q$ in degree zero, and $0$ in other degrees. The result
now follows from the K\"unneth theorem. 
\end{proof}

We recall the following form of the Vietoris--Begle theorem.

\begin{thm}[Vietoris--Begle; see {\cite[Chapter 6, Section 9, Theorem 15, p.~344]{Spanier}}] \label{thm:vietoris-begle}
Let $f\colon Z\to Y$ be a closed surjective map of paracompact Hausdorff spaces.
Suppose that, for every $y\in Y$,
\[
       \widetilde{\vH}^*(f^{-1}(y);\Q)=0.
\]
Then
\[
       f^*\colon \vH^*(Y;\Q)\longrightarrow \vH^*(Z;\Q)
\]
is an isomorphism.
\end{thm}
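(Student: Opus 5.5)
The plan is the classical sheaf-theoretic argument through the Leray spectral sequence. Since $Z$ and $Y$ are paracompact Hausdorff, \v{C}ech cohomology with coefficients in $\Q$ agrees with sheaf cohomology with coefficients in the constant sheaf $\Q$. I will therefore work with sheaf cohomology and consider the Leray spectral sequence of $f$:
\[
E_2^{p,q}=H^p\bigl(Y;R^qf_*\Q_Z\bigr)\Longrightarrow H^{p+q}(Z;\Q).
\]
Its edge homomorphism $H^p(Y;f_*\Q_Z)\to H^p(Z;\Q)$, composed with the map induced by the adjunction unit $\Q_Y\to f_*\Q_Z$, is $f^*$. So it suffices to prove two things: first, $R^qf_*\Q_Z=0$ for $q>0$; second, the unit $\Q_Y\to f_*\Q_Z$ is an isomorphism of sheaves. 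Given these, the spectral sequence is concentrated in the row $q=0$, and $f^*$ is an isomorphism.

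Both claims are computed stalkwise. The stalk of $R^qf_*\Q_Z$ at $y$ is
\[
\varinjlim_{U\ni y}H^q\bigl(f^{-1}(U);\Q\bigr),
\]
the limit taken over open neighbourhoods $U$ of $y$. Here the closedness of $f$ enters. For any open set $W\supseteq f^{-1}(y)$, the set $U=Y\setminus f(Z\setminus W)$ is an open neighbourhood of $y$ with $f^{-1}(U)\subseteq W$. Hence the sets $f^{-1}(U)$ are cofinal among all open neighbourhoods of the fiber, and the stalk equals
\[
\varinjlim_{W\supseteq f^{-1}(y)}H^q(W;\Q).
\]
The fiber $f^{-1}(y)$ is closed in the paracompact space $Z$, so it is taut with respect to \v{C}ech (sheaf) cohomology. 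Tautness identifies this direct limit with $\vH^q(f^{-1}(y);\Q)$.

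By hypothesis, this group vanishes for $q>0$, which proves the first claim. For $q=0$, the fiber is nonempty because $f$ is surjective, and $\widetilde{\vH}^0(f^{-1}(y);\Q)=0$ gives $\vH^0(f^{-1}(y);\Q)=\Q$, the locally constant functions on a fiber that is connected in the \v{C}ech sense. Under these identifications the unit map on stalks becomes the inclusion of constants $\Q\to\vH^0(f^{-1}(y);\Q)$, which is an isomorphism. This proves the second claim.

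The main obstacle is the tautness step: the identification of the direct limit of $H^q(W;\Q)$ over open neighbourhoods $W$ of the fiber with $\vH^q(f^{-1}(y);\Q)$. This is exactly where \v{C}ech cohomology and paracompactness are essential, since for singular cohomology it would fail. I would cite the standard tautness theorem for closed subspaces of paracompact spaces (Spanier, Chapter 6, or Bredon's sheaf theory) rather than reprove it. A secondary point is to check carefully that the edge map composed with the unit is indeed $f^*$, which follows from naturality of the Leray spectral sequence. Alternatively, one can simply invoke Spanier's theorem directly, as the paper does, since the hypotheses (closed surjective map, paracompact Hausdorff spaces, rationally acyclic fibers) match verbatim.
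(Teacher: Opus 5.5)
Your proof is correct. The paper gives no argument here: it only cites Spanier, so your proposal supplies a proof where the paper has a citation.

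You give the standard sheaf-theoretic proof:
\begin{itemize}
\item closedness of $f$ makes the sets $f^{-1}(U)$ cofinal among open neighbourhoods of a fiber;
\item tautness of closed subsets of the paracompact space $Z$ identifies the stalk of $R^qf_*\Q_Z$ at $y$ with $\vH^q(f^{-1}(y);\Q)$;
\item the Leray spectral sequence then collapses onto the row $q=0$, and the unit $\Q_Y\to f_*\Q_Z$ is an isomorphism.
\end{itemize}
As far as I recall, Spanier's proof uses the same two inputs (cofinality from closedness, tautness of fibers). It is phrased in terms of presheaves and Alexander--Spanier cohomology rather than the Leray spectral sequence.

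Citing the theorem is shorter. Your version shows exactly where each hypothesis enters: closedness for cofinality, paracompactness of $Z$ for tautness of the fibers, and paracompactness of $Y$ and $Z$ to pass between \v{C}ech and sheaf cohomology. These are precisely the points the paper must check for $\bar\pi^{\mathrm{alg}}$, in \cref{lem:algebraic-quotient-map-closed} and in the proof of \cref{prp:cohomology-parameter-space-fixed-ring}.

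One small precision. The open neighbourhoods $W$ of a fiber need not be paracompact, and your stalk formula uses sheaf cohomology of $W$. So the tautness theorem to cite is the sheaf-theoretic one (Bredon, \emph{Sheaf Theory}, Chapter~II, Section~10). Alternatively, by normality you can interleave with closed neighbourhoods, which are paracompact, and reduce to the \v{C}ech version.
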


\begin{lem}\label{lem:algebraic-quotient-map-closed}
For compact Hausdorff $X$, the map
\[
\bar\pi^{\mathrm{alg}}\colon
P_d^{\mathrm{alg}}(X)/S_d
\longrightarrow
\Ea_{d,X}
\]
is closed.
\end{lem}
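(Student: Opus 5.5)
The plan is to reduce to finite stages, where everything is compact, and then pass to the direct limit using the definition of the direct-limit topology. Write $\Fl_d^{\infty,\alg}=\bigcup_m \fl(1,2,\ldots,d,m)$, where for $m\geq d$ the stage $\fl(1,\ldots,d,m)$ consists of $d$-tuples of mutually orthogonal rank one projections in $M_m$. It is a compact space: the full flag manifold with the extra projection $q_{d+1}=1_m-\sum q_j$ forgotten. Set
\[
P_{d,m}=X^d\times\fl(1,\ldots,d,m).
\]
Then $P_d^{\alg}(X)$ is the increasing union of the compact spaces $P_{d,m}$. Since $X^d$ is compact Hausdorff, hence locally compact, the product $X^d\times \varinjlim_m \fl(1,\ldots,d,m)$ carries the direct-limit topology of the $P_{d,m}$. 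Each $P_{d,m}$ is $S_d$-invariant, so the quotient $P_d^{\alg}(X)/S_d$ is the union of the compact Hausdorff spaces $P_{d,m}/S_d$. It also carries their direct-limit topology, because quotient maps commute with colimits and the quotient by a finite group of a $\sigma$-compact direct limit of this kind is again the direct limit of the quotients.

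By the formula defining $\pi$, the map $\bar\pi^{\alg}$ sends $P_{d,m}/S_d$ into $E_{d,X,m}$. This is exactly the map used in the proof of \cref{lem_E_alg_countably_compactly_generated}.

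Now let $C\subset P_d^{\alg}(X)/S_d$ be closed. To show that $\bar\pi^{\alg}(C)$ is closed in $\Ea_{d,X}$, it suffices, by the definition of the direct-limit topology on $\Ea_{d,X}=\bigcup_m E_{d,X,m}$, to show that $\bar\pi^{\alg}(C)\cap E_{d,X,m}$ is closed, hence compact, in $E_{d,X,m}$ for every $m$. The key point is the identity
\[
\bar\pi^{\alg}(C)\cap E_{d,X,m}=\bar\pi^{\alg}\bigl(C\cap (P_{d,m}/S_d)\bigr).
\]
The inclusion $\supseteq$ is clear. For $\subseteq$, suppose $\bar\pi^{\alg}(\tilde c)=\omega\in E_{d,X,m}$ with $\tilde c$ represented by $(x_1,\ldots,x_d;q_1,\ldots,q_d)$. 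Then $\omega(f)=\sum_j f(x_j)q_j$, and choosing $f$ equal to $1$ near $x_j$ and $0$ at the other distinct points shows that each spectral projection $\sum_{x_i=x_j} q_i$ of $\omega$ lies in $M_m$. Each $q_j$ is dominated by one of these spectral projections, hence $q_j\leq \omega(1)\in M_m$. It follows that $q_j\in M_m$ for every $j$, so the representative lies in $P_{d,m}$.

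Granting the identity, the set $C\cap(P_{d,m}/S_d)$ is closed in the compact space $P_{d,m}/S_d$, hence compact. Its image under the continuous map $\bar\pi^{\alg}|_{P_{d,m}/S_d}\colon P_{d,m}/S_d\to E_{d,X,m}$ is compact in the Hausdorff space $E_{d,X,m}$, hence closed. This gives closedness for every $m$, and therefore $\bar\pi^{\alg}(C)$ is closed.

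I expect the main obstacle to be the topological bookkeeping: verifying that $P_d^{\alg}(X)$ and its $S_d$-quotient really carry the direct-limit topology of the compact stages, so that closed sets meet each stage in a compact set. Only the weaker fact that $C\cap(P_{d,m}/S_d)$ is closed in $P_{d,m}/S_d$ is actually needed, and that holds simply because the inclusion of each stage is continuous. So the argument survives even if the direct-limit identification is delicate. The remaining input is the continuity of $\bar\pi^{\alg}$ on each compact stage, which follows from the explicit formula.
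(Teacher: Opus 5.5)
Your proposal is correct and follows essentially the same route as the paper. Both arguments reduce to the finite stages via the direct-limit topology on $\Ea_{d,X}$. Both identify $\bar\pi^{\mathrm{alg}}(C)\cap E_{d,X,m}$ with $\bar\pi^{\mathrm{alg}}\bigl(C\cap(P_{d,m}/S_d)\bigr)$, using that each $q_j\le\omega(1)\in M_m$ forces $q_j\in M_m$. Both then conclude from compactness of $P_{d,m}/S_d$ and Hausdorffness of $E_{d,X,m}$.

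Your detour through the spectral projections of $\omega$ is harmless but unnecessary, since $q_j\le\sum_i q_i=\omega(1)$ directly.
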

\begin{proof}
Let $P_{d,m}(X)$ be the space of $d$-tuples of points of $X$, together with
$d$ mutually orthogonal rank one projections in $M_m$, and let $E_{d,X,m}$ be
the space of homomorphisms $C(X)\to M_m$ for which the image of $1$ has rank
$d$. Recall that
\[
P_d^{\mathrm{alg}}(X)/S_d=\bigcup_{m\geq d} P_{d,m}(X)/S_d
\]
with the direct limit topology, and
\[
\Ea_{d,X}=\bigcup_{m\geq d} E_{d,X,m}
\]
with the direct limit topology.

Let $Z\subset P_d^{\mathrm{alg}}(X)/S_d$ be closed. Since $\Ea_{d,X}$ has the
direct limit topology, it is enough to prove that
$\bar\pi^{\mathrm{alg}}(Z)\cap E_{d,X,m}$ is closed in $E_{d,X,m}$ for every
$m\geq d$.

We first claim that
\[
   (\bar\pi^{\mathrm{alg}})^{-1}(E_{d,X,m})=P_{d,m}(X)/S_d.
\]
Indeed, suppose that a point of $P_d^{\mathrm{alg}}(X)/S_d$ is represented by
$(x_1,\ldots,x_d;q_1,\ldots,q_d)$ and that its image under
$\bar\pi^{\mathrm{alg}}$ belongs to $E_{d,X,m}$. Then
\[
        \sum_{i=1}^d q_i
        =\bar\pi^{\mathrm{alg}}(x_1,\ldots,x_d;q_1,\ldots,q_d)(1)
\]
is a projection in $M_m$. Since $q_1,q_2,\ldots,q_d$ are mutually orthogonal rank one
projections and their sum lies in $M_m$, for each $j$, $q_j$ is a subprojection of a
projection in $M_m$. Hence $q_j\in M_m$. The reverse inclusion is
immediate. The claim is proved.

Let
\[
\bar\pi_m\colon P_{d,m}(X)/S_d\longrightarrow E_{d,X,m}
\]
be the finite stage quotient map. Then
\[
\bar\pi^{\mathrm{alg}}(Z)\cap E_{d,X,m}
=
\bar\pi_m\bigl(Z\cap(P_{d,m}(X)/S_d)\bigr).
\]
Since $Z$ is closed in the direct limit topology,
$Z\cap(P_{d,m}(X)/S_d)$ is closed in $P_{d,m}(X)/S_d$. The space
$P_{d,m}(X)/S_d$ is compact Hausdorff, and $E_{d,X,m}$ is Hausdorff. Therefore
$\bar\pi_m$ is closed, so the displayed image is closed in $E_{d,X,m}$.
This proves the lemma.
\end{proof}

\begin{prp}\label{prp:cohomology-parameter-space-fixed-ring}
For compact metrizable $X$, the pullback map
\[
   (\bar\pi^{\mathrm{alg}})^*\colon
   \vH^*(\Ea_{d,X};\Q)
   \longrightarrow
   \vH^*(P_d^{\mathrm{alg}}(X)/S_d;\Q)
\]
is an isomorphism. Moreover,
\[
   \vH^*(\Ea_{d,X};\Q)
   \cong
   \vH^*(P_d^{\mathrm{alg}}(X);\Q)^{S_d}.
\]
\end{prp}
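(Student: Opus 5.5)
The plan has two parts. First, apply the Vietoris--Begle theorem (\cref{thm:vietoris-begle}) to $f=\bar\pi^{\mathrm{alg}}$. Second, apply \cref{thm:cohomology-quotient} to the finite group $S_d$ acting on $P_d^{\mathrm{alg}}(X)$.

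For the first part we check the hypotheses of \cref{thm:vietoris-begle} in turn.
\begin{enumerate}
\item Surjectivity is the lemma establishing that $\bar\pi^{\mathrm{alg}}$ is onto.
\item Closedness is \cref{lem:algebraic-quotient-map-closed}.
\item Rational acyclicity of the fibers follows from \cref{cor:all-fibers-rational-acyclic}, which is stated for $E_{d,X}$, once we check that for $\omega\in\Ea_{d,X}$ the algebraic fiber $(\bar\pi^{\mathrm{alg}})^{-1}(\omega)$ is the same compact space as $\bar\pi^{-1}(\omega)$.
\end{enumerate}
For the third point, the fiber condition $\sum_{j\in\rho^{-1}(r)}q_j=p_r$ forces each $q_j\le p_r\in M_m$. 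The same subprojection argument used in the proof of \cref{lem:algebraic-quotient-map-closed} then gives $q_j\in M_m$. So the whole fiber lies in the compact stage $P_{d,m}(X)/S_d$, where the direct-limit topology and the norm topology agree. Hence the fiber is homeomorphic to $\prod_r \Fl_{s_r}/S_{s_r}$, which is rationally acyclic.

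The remaining hypothesis is paracompactness, and I expect this to be the main obstacle. Both $P_d^{\mathrm{alg}}(X)/S_d$ and $\Ea_{d,X}$ are countable increasing unions of compact metrizable (hence Hausdorff) spaces, with the direct-limit topology. A space with such a topology is a regular Lindel\"of space, or equivalently a $k_\omega$-space, and such spaces are paracompact Hausdorff. Metrizability of $X$ enters here: it makes the finite stages $P_{d,m}(X)=X^d\times\fl(1,\ldots,d,m)$ and their images metrizable. The Hausdorff property of the direct limit also needs checking. I would obtain it from the standard fact that a countable direct limit of compact Hausdorff spaces along closed embeddings is Hausdorff and normal, which applies since each stage is closed in the next. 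Given all this, Vietoris--Begle gives that $(\bar\pi^{\mathrm{alg}})^*$ is an isomorphism.

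For the second statement, apply \cref{thm:cohomology-quotient} to $S_d$ acting on $P_d^{\mathrm{alg}}(X)$ with $F=\Q$. This needs $P_d^{\mathrm{alg}}(X)$ to be paracompact Hausdorff, which holds by the same countable compact direct-limit argument, since it is the union of the compact metrizable spaces $P_{d,m}(X)$. We also need the quotient $P_d^{\mathrm{alg}}(X)/S_d$ to carry the direct-limit topology of the spaces $P_{d,m}(X)/S_d$. This is immediate because quotients by finite groups commute with these colimits: the action preserves each stage, and the orbit map is open. Composing the two isomorphisms gives $\vH^*(\Ea_{d,X};\Q)\cong\vH^*(P_d^{\mathrm{alg}}(X);\Q)^{S_d}$.
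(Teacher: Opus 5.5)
Your proposal is correct and follows the paper's proof. The paper applies Vietoris--Begle to $\bar\pi^{\mathrm{alg}}$ (surjective, closed by \cref{lem:algebraic-quotient-map-closed}, rationally acyclic fibers by \cref{cor:all-fibers-rational-acyclic}, paracompact as a countable direct limit of compact metrizable stages) and then \cref{thm:cohomology-quotient} for $S_d$ acting on $P_d^{\mathrm{alg}}(X)$; your checks that each algebraic fiber lies in one compact stage and that the orbit space carries the direct-limit topology fill in points the paper leaves implicit, though a Hausdorff $k_\omega$-space is regular Lindel\"of without the two being equivalent, and only that forward implication is needed.
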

\begin{proof}
The spaces $P_d^{\mathrm{alg}}(X)/S_d$ and $\Ea_{d,X}$ are countable direct
limits of compact metrizable spaces along closed inclusions.  In particular,
they are paracompact Hausdorff. The map $\bar\pi^{\mathrm{alg}}$ is surjective and is
closed by \cref{lem:algebraic-quotient-map-closed}. Its fibers are the 
finite flag quotients from Lemma~\ref{lem:general-quotient-fiber}; by
\cref{cor:all-fibers-rational-acyclic}, they are rationally acyclic. Therefore
\cref{thm:vietoris-begle} gives the first isomorphism.

The second isomorphism follows from the first isomorphism and
\cref{thm:cohomology-quotient}, applied to the finite group action of $S_d$ on
$P_d^{\mathrm{alg}}(X)$.
\end{proof}

\subsection{The fixed points of the cohomology ring of the ordered model}

Now take $X=S^{2n}$.  Let
\[
       a_j\in \vH^{2n}((S^{2n})^d;\Q)
\]
be the pullback of a fixed generator of $\widetilde{\vH}^{2n}(S^{2n};\Q)$ from
the $j$-th sphere factor.  Then
$a_j^2=0$ and $a_j a_l \neq 0$ for $j\neq l$.

Let $L_j$ be the $j$-th tautological line bundle over the stable flag factor
$\Fl_d^{\infty,\text{alg}}$, and set
\[
       \zeta_j=c_1(L_j).
\]
By the K\"unneth theorem and \cref{thm:stable-flag-cohomology},
\begin{equation}\label{eq:Pd-cohomology-ring}
       \vH^*(P_d^{\mathrm{alg}}(S^{2n});\Q)
       \cong
       \Q[\zeta_1,\ldots,\zeta_d]
       \otimes
       \Q[a_1,\ldots,a_d]/(a_1^2,\ldots,a_d^2).
\end{equation}
The group $S_d$ acts by simultaneously permuting the pairs
$(a_i,\zeta_i)$.

Set
\begin{equation}\label{eq:varepsilon-def}
       \varepsilon_j=e_j(\zeta_1,\ldots,\zeta_d) \;
       \; \text{ for } \; j=1,\ldots,d,
\end{equation}
and, for $m\in\Nz$, set
\begin{equation}\label{eq:nu-m-def}
       \nu_m=\sum_{j=1}^d a_j\zeta_j^m.
\end{equation}
Thus $\deg(\varepsilon_j)=2j$ and $\deg(\nu_m)=2n+2m$.

\begin{lem}\label{lem:one-a-module}
Let
\[
       T=\Q[\varepsilon_1,\ldots,\varepsilon_d]
       =\Q[\zeta_1,\ldots,\zeta_d]^{S_d}.
\]
The $S_d$-fixed part of the subspace of 
$\vH^*(P_d^{\mathrm{alg}}(S^{2n});\Q)$ given by 
\[
 \sum_{j=1}^d \Q[\zeta_1,\ldots,\zeta_d] \cdot a_j 
\]
 is a free
$T$-module with basis
$ \nu_0,\nu_1,\ldots,\nu_{d-1}$ as defined in \eqref{eq:nu-m-def}.
\end{lem}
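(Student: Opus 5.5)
The subspace $M=\sum_{j=1}^d \Q[\zeta_1,\ldots,\zeta_d]\, a_j$ is a free $\Q[\zeta_1,\ldots,\zeta_d]$-module with basis $a_1,\ldots,a_d$. This follows from \eqref{eq:Pd-cohomology-ring}: the monomials $a_j$ are linearly independent over the polynomial ring, since the exterior-like factor has basis the squarefree monomials in the $a_i$. So every element of $M$ has a unique expression $\sum_j f_j a_j$ with $f_j\in\Q[\zeta]$. The plan is to identify $M$ with an induced representation and reduce to the classical structure of $\Q[\zeta]$ over symmetric polynomials.

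\emph{Step 1: describe the fixed elements.} Since $\sigma$ sends $a_j\mapsto a_{\sigma(j)}$ and acts on $\Q[\zeta]$ by permuting variables, the element $\sum_j f_j a_j$ is fixed if and only if $f_{\sigma(j)}=\sigma f_j$ for all $\sigma$ and $j$. Thus the fixed element is determined by $f_1$, and $f_1$ must be invariant under the stabilizer $S_{d-1}$ of the index $1$, which permutes $\zeta_2,\ldots,\zeta_d$. Conversely, any $g\in \Q[\zeta_1,\ldots,\zeta_d]^{S_{d-1}}$ gives a fixed element $\Phi(g)=\sum_j \sigma_j(g)\,a_j$, where $\sigma_j$ is any permutation with $\sigma_j(1)=j$; this is well defined by the invariance of $g$. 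Hence $\Phi$ is an isomorphism $\Q[\zeta]^{S_{d-1}}\to M^{S_d}$. It is $T$-linear because symmetric polynomials commute with every $\sigma_j$. Moreover $\Phi(\zeta_1^m)=\nu_m$.

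\emph{Step 2: the module structure of $\Q[\zeta]^{S_{d-1}}$ over $T$.} We have $\Q[\zeta]^{S_{d-1}}=\Q[\zeta_1]\otimes\Q[e_1',\ldots,e_{d-1}']$, where the $e_i'$ are the elementary symmetric polynomials in $\zeta_2,\ldots,\zeta_d$. I claim this ring is free over $T$ with basis $1,\zeta_1,\ldots,\zeta_1^{d-1}$. From $\prod_{i}(t+\zeta_i)=(t+\zeta_1)\prod_{i\ge2}(t+\zeta_i)$ one solves recursively: $e_i'=\sum_{r}(-1)^r\zeta_1^r\varepsilon_{i-r}$. Hence $\Q[\zeta]^{S_{d-1}}=T[\zeta_1]$. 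Further, $\zeta_1$ satisfies the monic relation $\prod_i(\zeta_1-\zeta_i)=0$, which has degree $d$ with coefficients in $T$, so the powers $1,\ldots,\zeta_1^{d-1}$ span. For independence, compare Hilbert series: $\Q[\zeta]^{S_{d-1}}$ has Poincaré series $1/\bigl((1-t)\prod_{i=1}^{d-1}(1-t^i)\bigr)$ in $t$ of weight $\zeta$-degree, while $T$ has series $1/\prod_{i=1}^d(1-t^i)$. Their ratio is $(1-t^d)/(1-t)=1+t+\cdots+t^{d-1}$. Therefore a spanning set with these degrees must be a basis, since spanning gives a coefficientwise inequality of series and equality forces freeness. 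Alternatively, independence follows from fraction fields, since $[\Q(\zeta)^{S_{d-1}}:\Q(\zeta)^{S_d}]=d$.

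\emph{Step 3: conclude.} Transporting along $\Phi$, the elements $\nu_0,\ldots,\nu_{d-1}$ form a free $T$-basis of $M^{S_d}$.

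The main obstacle is the freeness in Step 2; I would use the Hilbert series argument, since it is cleanest. A secondary point needing care is verifying in Step 1 that the $a_j$ are free generators, so that the coefficient comparison is legitimate; the $a_j^2=0$ relations do not interfere, because $M$ involves only terms linear in the $a$'s.
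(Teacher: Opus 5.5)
Your proof is correct and follows the same outline as the paper's. A fixed element $\sum_j f_j a_j$ is determined by $f_1\in\Q[\zeta_1,\ldots,\zeta_d]^{S_{d-1}}$. This ring equals $T[\zeta_1]$ by the same recursion that expresses $e_i'$ through $\zeta_1$ and the $\varepsilon_j$ (the paper's polynomials $g_j$). Finally, the monic degree $d$ relation $\prod_j(\zeta_1-\zeta_j)=0$ shows that $1,\zeta_1,\ldots,\zeta_1^{d-1}$ span.

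The one genuine difference is how you prove that these powers are independent over $T$.

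The paper shows directly that the kernel of $T[t]\to\Q[\zeta_1,\ldots,\zeta_d]^{S_{d-1}}$, $t\mapsto\zeta_1$, is the ideal generated by $p(t)=\prod_j(t-\zeta_j)$. A polynomial over $T$ that vanishes at $\zeta_1$ is $S_d$-invariant, hence divisible by every $t-\zeta_j$, hence by $p$. This yields the explicit presentation $T[t]/(p(t))$.

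You instead count graded dimensions via Hilbert series. This is equally valid. It uses the algebraic independence of $\zeta_1,e_1',\ldots,e_{d-1}'$ and of $\varepsilon_1,\ldots,\varepsilon_d$ to compute the two series, and it avoids the divisibility argument.

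Your field-degree alternative also works, once you note that $\zeta_1$ has the $d$ distinct conjugates $\zeta_1,\ldots,\zeta_d$ over $\Q(\zeta_1,\ldots,\zeta_d)^{S_d}$. That fact, rather than the index $d$ alone, is what excludes a relation of degree less than $d$.
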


\begin{proof}
		Let $R=\Q[\zeta_1,\ldots,\zeta_d]$.
		Let $S_{\{2,\ldots,d\}}$ be the subgroup of $S_d$ that fixes $1$.
		
	We first describe the structure of $R^{S_{\{2,\ldots,d\}}}$ as a \(T\)-module.  
		Setting 
	\[
	\varepsilon_j^{(1)}
	=
	e_j(\zeta_2,\ldots,\zeta_d),
	\]
	we have
	\[
	R^{S_{\{2,\ldots,d\}}}
	=
	\Q[\zeta_1,\varepsilon_1^{(1)},\ldots,\varepsilon_{d-1}^{(1)}].
	\]	
	Set
	\[
	p(t)
	=
	\prod_{j=1}^d(t-\zeta_j) =
	t^d-\varepsilon_1t^{d-1}
	+\varepsilon_2t^{d-2}
	-\cdots
	+(-1)^d\varepsilon_d
	\in T[t].
	\]
	Define a  \(T\)-algebra homomorphism
	$\Phi \colon T[t] \to R^{S_{\{2,\ldots,d\}}}$
	by $\Phi(t) = \zeta_1$. Because $p(\zeta_1) = 0$, the homomorphism
	$\Phi$ vanishes on the ideal generated by $p(t)$; denote this ideal by $J$,
	and write 
	\[
	\bar{\Phi} \colon T[t]/J \to R^{S_{\{2,\ldots,d\}}}
	\] 
	for the induced map on the quotient.
	
	We claim that the map $\bar{\Phi}$ is an isomorphism.  To see this, with
	$\varepsilon_j$ as in \eqref{eq:varepsilon-def},
	define polynomials \(g_j(t)\in T[t]\) recursively by
	$g_0(t)=1$
	and $g_j(t)=\varepsilon_j-tg_{j-1}(t)$
	for $j=1,2,\ldots,d-1$.
 	Then
	$\Phi(t) = \zeta_1$ and one checks by induction that $\Phi(g_j) = \varepsilon_j^{(1)}$ for $j=1,2,\ldots,d-1$. Hence $\Phi$ is surjective. It remains to show that the kernel of $\Phi$ is contained in $J$. Indeed, suppose
	$q(t) \in T[t]$ satisfies $\Phi(q) = 0$. Then $q(\zeta_1) = 0$, so $q(t)$ is divisible by $t-\zeta_1$. Because $q$ is fixed by the action of $S_d$, it must be divisible by $\prod_{j=1}^d(t-\zeta_j) = p(t)$, so $q \in J$, as claimed.

	Since \(p(t)\) is monic of degree \(d\), the quotient \(T[t]/(p(t))\) is a
	free \(T\)-module with basis
	$ 1,t,\ldots,t^{d-1} $.
	Equivalently,
	\[
	R^{S_{\{2,\ldots,d\}}}
	\cong
	T\oplus T \zeta_1\oplus\cdots\oplus T \zeta_1^{d-1}.
	\]

	Suppose $f_1,f_2,\ldots,f_d \in R$ are elements for which
	\[
	u=\sum_{j=1}^d a_j f_j
	\]
	is fixed by \(S_d\).  Then \(u\) is determined by \(f_1\).  Moreover,
	\(f_1\) must be fixed by the stabilizer of the index \(1\), which is the 
	symmetric group
	\(S_{\{2,\ldots,d\}}\).  Hence
	$ f_1\in R^{S_{\{2,\ldots,d\}}}$.
	Hence there are unique elements \(h_0,\ldots,h_{d-1}\in T\) such that
	\[
	f_1=\sum_{m=0}^{d-1}h_m\zeta_1^m.
	\]
	Since \(u\) is fixed by \(S_d\), for $j=2,3,\ldots,d$, the coefficient of \(a_j\) is obtained by
	applying a permutation taking \(1\) to $j$.  Therefore
	\[
	u
	=
	\sum_{m=0}^{d-1}h_m
	\sum_{j=1}^d a_j\zeta_j^m
	=
	\sum_{m=0}^{d-1}h_m\nu_m.
	\]
	This shows that \(\nu_0,\ldots,\nu_{d-1}\) generate the fixed point part over $T$.
	
	The same argument shows linear independence.  Indeed, if $h_0,h_1,\ldots,h_{d-1} \in T$ satisfy 
	\[
	\sum_{m=0}^{d-1}h_m\nu_m=0,
	\]
	then the coefficient of \(a_1\) is
	\[
	\sum_{m=0}^{d-1}h_m\zeta_1^m=0.
	\]
	But the expansion in the basis
	\[
	1,\zeta_1,\ldots,\zeta_1^{d-1}
	\]
	as a \(T\)-basis is unique.  Hence \(h_m=0\) for every \(m\), as required.
\end{proof}

\begin{lem}\label{lem:nu-recurrence}
	With the notation as in \cref{lem:one-a-module},
for every $m\geq d$,
\begin{enumerate}
	\item\label{eq:nu-recurrence} 
	$ \nu_m
	=\varepsilon_1\nu_{m-1}-\varepsilon_2\nu_{m-2}
	+\cdots+(-1)^{d-1}\varepsilon_d\nu_{m-d}$.
	\item $\nu_m$ is decomposable in
$\vH^*(P_d^{\mathrm{alg}}(S^{2n});\Q)^{S_d}$ for every $m\geq d$.
\end{enumerate}
\end{lem}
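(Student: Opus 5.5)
The recurrence in part~(1) comes from the characteristic polynomial $p(t)=\prod_{j=1}^d(t-\zeta_j)$ already used in the proof of \cref{lem:one-a-module}. For each fixed $j$, substitute $t=\zeta_j$ into $t^{m-d}p(t)$. Since $p(\zeta_j)=0$, this gives
\[
\zeta_j^m-\varepsilon_1\zeta_j^{m-1}+\varepsilon_2\zeta_j^{m-2}-\cdots+(-1)^d\varepsilon_d\zeta_j^{m-d}=0
\]
in $\Q[\zeta_1,\ldots,\zeta_d]$, valid for every $m\geq d$. Multiply this identity by $a_j$ and sum over $j=1,\ldots,d$. The coefficients $\varepsilon_i$ are symmetric, so they are the same for every $j$ and factor out of the sum. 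By the definition \eqref{eq:nu-m-def} of $\nu_m$, the result is
\[
\nu_m-\varepsilon_1\nu_{m-1}+\cdots+(-1)^d\varepsilon_d\nu_{m-d}=0,
\]
which is exactly \eqref{eq:nu-recurrence}. This step is purely formal; the relations $a_j^2=0$ play no role, since each term contains a single $a_j$.

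For part~(2), work in the graded ring $R'=\vH^*(P_d^{\mathrm{alg}}(S^{2n});\Q)^{S_d}$. Each $\varepsilon_i$ is $S_d$-invariant and has degree $2i>0$, so it lies in $R'_+$. Each $\nu_{m-i}$ is $S_d$-invariant and has degree $2n+2(m-i)\geq 2n>0$, so it also lies in $R'_+$. Hence every summand $\varepsilon_i\nu_{m-i}$ on the right-hand side of \eqref{eq:nu-recurrence} is a product of two positive-degree invariant elements. Therefore $\nu_m\in R'_+\cdot R'_+$, which means $\nu_m$ is decomposable in the sense of \cref{dfn:indecomposable-elements}.

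The only point that needs care, and it is minor, is that decomposability must be checked inside the fixed subring, not in the full cohomology ring. This requires both factors of each product to be $S_d$-invariant, which holds for $\varepsilon_i$ and $\nu_{m-i}$ by construction. It also requires both factors to have strictly positive degree, which holds because $n\geq 1$ and $i\geq 1$. I expect no genuine obstacle.
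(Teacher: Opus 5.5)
Your proposal is correct and follows the paper's proof: substitute $\zeta_j$ into $t^{m-d}p(t)$, multiply by $a_j$, sum over $j$ to get the recurrence, then observe that each term $\varepsilon_i\nu_{m-i}$ is a product of two positive-degree $S_d$-invariant classes. Your remark that both factors must be invariant, so that decomposability holds in the fixed subring rather than only in the full cohomology ring, is a point the paper leaves implicit.
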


\begin{proof}
	For $j=1,2,\ldots,d$, the element $\zeta_j$ satisfies the equation
\[
		\zeta_j^d
		-\varepsilon_1\zeta_j^{d-1}
		+\varepsilon_2\zeta_j^{d-2}
		-\cdots+(-1)^d\varepsilon_d=0.
\]
Multiply this equation by $a_j\zeta_j^{m-d}$ and sum over
$j=1,\ldots,d$.  This gives \eqref{eq:nu-recurrence}.  Each term on the
right hand side is a product of the positive degree class $\varepsilon_j$ with
the positive degree class $\nu_{m-j}$, so $\nu_m$ is decomposable.
\end{proof}

For completeness, we also record why the fixed point ring is generated by the
classes which have just appeared.

\begin{prp}\label{lem:fixed-point-ring-generated}
	Let
	\[
	R=
	\Q[a_1,\ldots,a_d,\zeta_1,\ldots,\zeta_d]/
	(a_1^2,\ldots,a_d^2),
	\]
	with $S_d$ acting by simultaneously permuting the pairs
	$(a_j,\zeta_j)$. As in \eqref{eq:varepsilon-def}, let
	\[
	\varepsilon_j=e_j(\zeta_1,\ldots,\zeta_d)
	\; \text{ for } \;  j=1,2,\ldots,d,
	\]
	and, as in \eqref{eq:nu-m-def}, for \(m\geq 0\), let
	\[
	\nu_m=\sum_{j=1}^d a_j\zeta_j^m.
	\]
	Then the fixed point ring \(R^{S_d}\) is generated as a \(\Q\)-algebra by
	$
	\varepsilon_1,\ldots,\varepsilon_d$ and $\nu_0,\nu_1,\ldots,\nu_{d-1}$, and the indecomposable quotient of \(R^{S_d}\) is generated as a $\Q$-vector space by the
	images of these classes.
\end{prp}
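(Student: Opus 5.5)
Since $R$ has a $\Q$-basis of monomials $a_A\zeta^\alpha$, where $a_A=\prod_{j\in A}a_j$ for a subset $A\subseteq\{1,\ldots,d\}$, and $S_d$ permutes these monomials, I would compute $R^{S_d}$ by averaging: it is spanned by the orbit sums $\Avg(a_A\zeta^\alpha)$. I then sort these by $|A|$. The case $|A|=0$ gives $\Q[\zeta_1,\ldots,\zeta_d]^{S_d}=T=\Q[\varepsilon_1,\ldots,\varepsilon_d]$. The case $|A|=1$ is exactly \cref{lem:one-a-module}, which gives the $T$-module generated by $\nu_0,\ldots,\nu_{d-1}$. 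So the main work is to show that every orbit sum with $|A|=s\geq 2$ lies in the subalgebra generated by the $\varepsilon_j$ and the $\nu_m$. By \cref{lem:nu-recurrence}, all $\nu_m$ with $m\geq d$ lie in that subalgebra, so it will be enough to generate with $\nu_m$ for arbitrary $m$.

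The key step is induction on $s$, using products of $\nu$'s. For exponents $m_1,\ldots,m_s$, I would expand
\[
\nu_{m_1}\cdots\nu_{m_s}=\sum_{j_1,\ldots,j_s\ \text{distinct}} a_{j_1}\cdots a_{j_s}\,\zeta_{j_1}^{m_1}\cdots\zeta_{j_s}^{m_s},
\]
where repeated indices drop out because $a_j^2=0$. After symmetrizing over the $m_i$, this is the full symmetric sum
\[
\sum_{|A|=s} a_A\, m_\lambda(\zeta_A),
\]
with $m_\lambda(\zeta_A)$ the monomial symmetric polynomial in the variables indexed by $A$. A general invariant element of $a$-degree $s$ has the form
\[
\sum_{|A|=s} a_A\, g(\zeta_A;\zeta_{A^c}),
\]
where $g$ is symmetric separately in the two groups of variables.

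To reach such $g$, I would first note that $\Q[\zeta_1,\ldots,\zeta_d]^{S_s\times S_{d-s}}$ is generated as a $T$-module (indeed as an algebra) by symmetric polynomials in $\zeta_A$ alone. This is because symmetric functions in $\zeta_{A^c}$ can be written using the total symmetric functions together with those of $\zeta_A$, through the identity
\[
\prod_{j\in A^c}(1+\zeta_j t)=\frac{\prod_{j=1}^d(1+\zeta_j t)}{\prod_{j\in A}(1+\zeta_j t)}.
\]
Multiplying by elements of $T$ commutes with the averaging, so it remains to produce $\sum_{|A|=s} a_A\, f(\zeta_A)$ for $f$ symmetric in $s$ variables. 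Since power sums generate $\Q[\zeta_A]^{S_s}$ over $\Q$ and $a_A^2=0$, a cleaner device is to use the products $\prod_i\nu_{m_i}$ and triangularity. The symmetrized product yields $\sum_A a_A\,m_\lambda(\zeta_A)$ up to a positive multiplicity, namely the product of the factorials of the multiplicities of the parts of $\lambda$. The $m_\lambda$ span all symmetric polynomials, so every such element is in the subalgebra. I expect the bookkeeping here, getting the precise decomposition of $S_s\times S_{d-s}$-invariants over $T$, to be the main obstacle. If the division identity is messy, I would instead absorb the $\zeta_{A^c}$-dependence by induction on $\zeta_{A^c}$-degree, subtracting $\varepsilon_j\cdot(\cdots)$ terms.

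Finally, the indecomposable statement follows formally. Once $R^{S_d}$ is generated as an algebra by the classes $\varepsilon_1,\ldots,\varepsilon_d,\nu_0,\ldots,\nu_{d-1}$, every positive-degree element is a polynomial in them without constant term. Modulo $R^{S_d}_+\cdot R^{S_d}_+$, such an element is a $\Q$-linear combination of the generators themselves. Hence their images span the indecomposable quotient.
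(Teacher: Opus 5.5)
Your proposal is correct. It shares its skeleton with the paper's proof: both span $R^{S_d}$ by orbit sums of monomials, and both rest on the same base computation. Because $a_j^2=0$, a product $\nu_{m_1}\cdots\nu_{m_s}$ is exactly the sum over pairwise distinct indices, with no $\zeta$'s outside the $a$-support; this is the paper's $\Omega(m_1,\ldots,m_s;\varnothing)$.

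The difference is in how the dependence on $\zeta_{A^c}$ is absorbed. The paper inducts on the number $q$ of indices outside the $a$-support that carry a positive exponent. It multiplies $\Omega(m;c_1,\ldots,c_{q-1})$ by the power sum $p_c\in T$ and notes that every resulting term other than $\Omega(m;c_1,\ldots,c_{q-1},c)$ is already in the subalgebra. You instead use the structural fact that the invariants of the stabilizer $S_s\times S_{d-s}$ of $a_A$ are the $T$-span of $\Q[\zeta_A]^{S_s}$. Your division identity gives this immediately as $e_j(\zeta_{A^c})=\sum_{i=0}^{j}(-1)^i h_i(\zeta_A)\,\varepsilon_{j-i}$, with $h_i$ the complete homogeneous symmetric polynomials and $\varepsilon_0=1$. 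So the step you flagged as the main obstacle is routine.

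Your route gives a slightly sharper conclusion: the $a$-degree $s$ part of $R^{S_d}$ is the $T$-span of products of exactly $s$ of the $\nu_m$. The cost is quoting the fundamental theorem for a Young subgroup, whereas the paper's induction is more hands-on and needs only $p_c\in T$.

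Minor points:
\begin{itemize}
\item No induction on $s$ is actually used; each $a$-degree is handled directly.
\item No triangularity or symmetrization over the $m_i$ is needed, since $\nu_{m_1}\cdots\nu_{m_s}$ already equals a positive multiple of $\sum_{|A|=s}a_A\,m_\lambda(\zeta_A)$.
\item The paper also proves that the listed classes are genuinely indecomposable, via an auxiliary $a$-grading and \cref{lem:one-a-module}. That goes beyond the spanning statement, which your last paragraph handles correctly.
\end{itemize}
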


\begin{proof}
	Let
	\[
	T=\Q[\varepsilon_1,\ldots,\varepsilon_d]
	=
	\Q[\zeta_1,\ldots,\zeta_d]^{S_d}.
	\]
	For \(c\geq 1\), set
	\[
	p_c=\sum_{j=1}^d \zeta_j^c.
	\]
	By Newton's identities (\cite[page 23, equations (2.11) and (2.11')]{Macdonald1995}), we have \(p_c\in T\) for every \(c\geq 1\).	
	The ring \(R\) has a \(\Q\)-basis consisting of monomials
	\[
	\bigl \{ a_{i_1}\cdots a_{i_r}
	\zeta_1^{b_1}\cdots \zeta_d^{b_d} \mid r = 0,1,\ldots , d \; , \;
	1 \leq i_1 < i_2 < \cdots < i_r \leq d  \; , \, b_1,b_2,\ldots, b_d \in \Nz \bigr \},
	\]
	where for $r=0$ the empty product is taken to be $1$.  As the elements $a_1,a_2,\ldots, a_d$ commute, the  group $S_d$ 
	permutes this basis.  Therefore the fixed point subspace is 
	spanned by sums of such monomials over each orbit.
	
	We now introduce a convenient family of orbit sums.  For 
	integers
	\(r,q\geq 0\), for nonnegative integers \(m_1,\ldots,m_r\), and for strictly positive
	integers \(c_1,\ldots,c_q\), define
	\[
	\Omega(m_1,\ldots,m_r;c_1,\ldots,c_q) = 
	\sum
	a_{i_1}\cdots a_{i_r}
	\zeta_{i_1}^{m_1}\cdots \zeta_{i_r}^{m_r}
	\zeta_{j_1}^{c_1}\cdots \zeta_{j_q}^{c_q},
	\]
	where the sum is over all ordered tuples
	\[
	(i_1,\ldots,i_r,j_1,\ldots,j_q)
	\]
	of pairwise distinct elements of \(\{1,\ldots,d\}\).  If \(r+q>d\), this sum
	is understood to be zero.  When \(q=0\), we write
	\(\Omega(m_1,\ldots,m_r;\varnothing)\) for this element; if $q=0$ and $r=0$, this is taken to mean $1$.  These elements
	clearly span \(R^{S_d}\).
	
	Let \(C\) be the subalgebra of \(R^{S_d}\) generated by
	$T$ and by $\nu_0,\nu_1,\nu_2,\ldots$.
	We claim that  $\Omega(m_1,\ldots,m_r;c_1,\ldots,c_q) \in C$ for all applicable tuples
	$m_1,\ldots,m_r$ and $c_1,\ldots,c_q$.
	For $r = 0$, those elements are all in $T$, so it is enough to consider the ones for which $r>0$. 
	The proof is by induction on $q$.
	
	If \(q=0\), then
	\[
	\Omega(m_1,\ldots,m_r;\varnothing)
	=
	\nu_{m_1}\nu_{m_2}\cdots\nu_{m_r}.
	\]
	We have
	\[
	\nu_{m_1}\nu_{m_2}\cdots \nu_{m_r}
	=
	\left(\sum_{j=1}^d a_{j}\zeta_{j}^{m_1}\right)
		\left(\sum_{j=1}^d a_{j}\zeta_{j}^{m_2}\right)
	\cdots
	\left(\sum_{j=1}^d a_{j}\zeta_{j}^{m_r}\right) .
	\]
	Expanding this product gives
	\[
	\nu_{m_1}\nu_{m_2}\cdots \nu_{m_r}
	=
	\sum_{(i_1,\ldots,i_r)\in \{1,\ldots,d\}^r}
	a_{i_1}\cdots a_{i_r}
	\zeta_{i_1}^{m_1}\cdots \zeta_{i_r}^{m_r}.
	\]
	We split this sum into the terms for which $i_1,\ldots,i_r$ are pairwise distinct
	and the terms for which at least two indices coincide:
	\[
	\begin{aligned}
		\nu_{m_1}\cdots \nu_{m_r}
		&=
		\sum_{\substack{(i_1,\ldots,i_r)\\ \text{pairwise distinct}}}
		a_{i_1}\cdots a_{i_r}
		\zeta_{i_1}^{m_1}\cdots \zeta_{i_r}^{m_r} \\
		&\qquad
		+
		\sum_{\substack{(i_1,\ldots,i_r)\\  i_\alpha=i_\beta \text{ for some } \alpha\neq\beta}}
		a_{i_1}\cdots a_{i_r}
		\zeta_{i_1}^{m_1}\cdots \zeta_{i_r}^{m_r}.
	\end{aligned}
	\]
	If $i_\alpha=i_\beta$ with $\alpha\neq\beta$, then the corresponding term contains
	the factor
	\[
	a_{i_\alpha}a_{i_\beta}=a_{i_\alpha}^2=0
	\]
	in $R$. Hence every term in the second sum is zero. Therefore
	\[
	\nu_{m_1}\cdots \nu_{m_r}
	=
	\sum_{\substack{(i_1,\ldots,i_r)\\ \text{pairwise distinct}}}
	a_{i_1}\cdots a_{i_r}
	\zeta_{i_1}^{m_1}\cdots \zeta_{i_r}^{m_r}.
	\]
	The right-hand side is exactly $\Omega(m_1,\ldots,m_r;\varnothing)$.
	 If $r>d$,
	there are no pairwise distinct $r$-tuples, and every term in the expansion of
	$\nu_{m_1}\cdots\nu_{m_r}$ has a repeated index, hence is zero, so the 
	argument still applies.
	Therefore
	\(\Omega(m_1,\ldots,m_r;\varnothing)\in C\).
	
	Now assume \(q\geq 1\), and write
	\[
	m=(m_1,\ldots,m_r),
	\qquad
	c'=(c_1,\ldots,c_{q-1}),
	\qquad
	c=c_q.
	\]
	Here \(c'\) denotes the empty tuple, written \(\varnothing\), when 
	\(q=1\).
	We have
	\[
	\begin{aligned}
		\Omega(m ;c')p_c
		&=
		\Omega( m ;c',c)                                      \\
		&\quad
		+\sum_{\nu=1}^r
		\Omega(m_1,\ldots,m_{\nu-1},m_\nu+c,m_{\nu+1},\ldots,m_r;c')                   \\
		&\quad
		+\sum_{\nu=1}^{q-1}
		\Omega(m ;c_1,\ldots,c_{\nu-1},c_\nu+c,c_{\nu+1},\ldots,c_{q-1}).
	\end{aligned}
	\]
	
	By the induction hypothesis, every term on the right except possibly
	\(\Omega( m; c',c)\) is in \(C\).  Also
	\(p_c\in T\subset C\) and $\Omega(m;c') \in C$, so the left hand side is in \(C\).  Hence
	$\Omega( m;c',c)\in C$.
	This completes the induction and proves $R^{S_d}=C$.
		
	By \cref{lem:nu-recurrence}, for $m\geq d$, the element $\nu_m$ is in the
	algebra generated by $\varepsilon_1,\ldots,\varepsilon_d$ and $ \nu_0,\ldots,\nu_{d-1}$. Therefore \(R^{S_d}\) is generated by these classes. 
	
	It remains to show that $\varepsilon_1,\ldots,\varepsilon_d$ and $ \nu_0,\ldots,\nu_{d-1}$ are in fact indecomposable. We can endow $R$ with a different grading, defined by assigning $\zeta_1,\ldots,\zeta_d$ degree $0$ and assigning $a_1,\ldots,a_d$ degree $1$. With this auxiliary grading, the elements $\varepsilon_1,\ldots,\varepsilon_d$ have degree $0$ and the elements $\nu_0,\ldots,\nu_{d-1}$ have degree $1$. This shows that if one of the elements $\varepsilon_1,\ldots,\varepsilon_d$ can be decomposed as a linear combination of products of the other elements among $\varepsilon_1,\ldots,\varepsilon_d$ and $ \nu_0,\ldots,\nu_{d-1}$, then, by isolating just the degree zero part, it can be decomposed as a linear combination of products of other elements from $\varepsilon_1,\ldots,\varepsilon_d$. This cannot happen, because the subalgebra generated by $\varepsilon_1,\ldots,\varepsilon_d$ is the polynomial ring in those variables. Now, if one the elements $ \nu_0,\ldots,\nu_{d-1}$, were decomposable, it could be written as a linear combination of the others with coefficients in $\Q[\varepsilon_1,\ldots,\varepsilon_d]$, which contradicts \cref{lem:one-a-module}.
\end{proof}

\subsection{Proof of the main theorem}
\begin{proof}[Proof of \cref{thm:rank-d-obstruction}]
	We first prove \eqref{thm:rank-d-obstruction_item_1}. 
Let
\[
       \beta\in \widetilde K^0(S^{2n})
\]
be the reduced Bott class.  Define
$a\in\widetilde{\vH}^{2n}(S^{2n};\Q)$ by
\begin{equation}\label{eq:Ch-u}
      a = \Ch(\beta).
\end{equation}
For $j=1,2,\ldots,d$, let \(a_j\) be the pullback of \(a\) to the \(j\)-th copy of \(S^{2n}\).

Let
\[
       \psi_{S^{2n},d} \colon C(S^{2n})\longrightarrow C(\Ea_{d,S^{2n}},K)
\]
be the universal homomorphism from Definition~\ref{dfn_general_theory_def_of_psi}.  As in \cref{lem_pullback_identification}, with $\beta$ in place of $\xi$, let
$\gamma\in RK^0(\Ea_{d,S^{2n}})$ (called $\zeta$ in \cref{lem_pullback_identification}) be the class corresponding to
$(\psi_{S^{2n},d})_*(\beta)\in RK_0(C(\Ea_{d,S^{2n}},K))$.

For $j=1,2,\ldots, d$, denote by $\rho_j\colon (S^{2n})^d\to S^{2n}$ the 
projection to the $j$-th  factor. We now have, in rational $K$-theory,
\begin{equation}\label{eq:pi-pullback-xi}
       (\pi^{\mathrm{alg}})^*(\gamma)
          =\sum_{j=1}^d \rho_j^*(\beta)\otimes [L_j] .
\end{equation}
 Applying the Chern character (\cite[Chapter V, Section 3, Theorem 3.23, pp.~282--283]{Karoubi}),
  with $\nu_m$ for $m=0,1,\ldots$ defined as in \eqref{eq:nu-m-def}, we get
\begin{equation}\label{eq:Ch-xi-ordered}
       (\pi^{\mathrm{alg}})^*\Ch(\gamma)
          =\sum_{j=1}^d a_j\exp(\zeta_j)
          =\sum_{m=0}^{\infty}\frac{1}{m!}\nu_m.
\end{equation}
Consequently the degree $2(n+m)$ component of $(\pi^{\mathrm{alg}})^*\Ch(\gamma)$ is
\begin{equation}\label{eq:degree-component-nu}
       \frac{1}{m!}\nu_m.
\end{equation}
There are no components in positive degrees below $2n$.

Now suppose, for contradiction, that
\[
       \varphi\colon C(S^{2n}) \to C(S^{2k},K)
\]
has $\rank(\varphi(1))=d$ and is injective on $K_0$.  Using \cref{lem:E_alg_homotopy_equivalence}, we may assume without loss of generality
that $\varphi \colon C(S^{2n}) \to C(S^{2k},M_{\infty})$.  As in \cref{lem_general_theory_h_vaprhi}, it 
determines a continuous map $h_{\varphi} \colon S^{2k}\to E_{d,S^{2n}}$.
By \cref{lem:E_alg_homotopy_equivalence}, we may assume that the codomain is 
$\Ea_{d,S^{2n}}$.
By \cref{lem_pullback_identification}, $h_\varphi^*(\gamma)=\varphi_*(\beta)$ in $K^0(S^{2k})\otimes\Q$.
Since $\varphi_*$ is injective on $K_0$ and $\beta$ is a generator of the 
reduced
group  $\widetilde{K}^0(S^{2n})$, the class $\varphi_*(\beta)$ is nonzero.  Because
$K^0(S^{2k})$ is torsion-free and the rational Chern character is injective on
$K^0(S^{2k})\otimes\Q$, it follows that
\begin{equation}\label{eq:nonzero-pulled-chern}
       h_\varphi^*( \Ch(\gamma) ) \neq 0
       \quad\hbox{in } \widetilde{\vH}^{2k}(S^{2k};\Q).
\end{equation}

We now show that this is impossible when
$k\notin\{n,n+1,\ldots,n+d-1\}$.  If $k<n$, then $\Ch(\gamma)$ has no component in
degree $2k$, since the reduced Bott class begins in degree $2n$.  Thus the
degree $2k$ component is zero, and hence decomposable.  By
\cref{cor:injectivity_Chern_class_criterion}, $\varphi_*$ cannot be injective
on $K_0$, a contradiction.

It remains to consider $k\geq n+d$.  Write $k=n+m$ with $m\geq d$.
We compute $\Ch(\gamma)$ on the algebraic model.  By
\eqref{eq:degree-component-nu}, its degree $2k=2(n+m)$ component pulls back to
the ordered algebraic model as a nonzero scalar multiple of $\nu_m$.  By
\cref{lem:nu-recurrence}, $\nu_m$ is decomposable for every
$m\geq d$.  By \cref{prp:cohomology-parameter-space-fixed-ring},
$\vH^*(\Ea_{d,S^{2n}};\Q)$ is identified with the $S_d$-fixed point subring
of the ordered algebraic model.  Hence the degree $2k$ component of
$\Ch(\gamma)$ is decomposable in $\vH^*(\Ea_{d,S^{2n}};\Q)$.  Therefore
\cref{cor:injectivity_Chern_class_criterion} again implies that $\varphi_*$ is
not injective on $K_0$, a contradiction.

Thus no such $\varphi$ exists when
$k\notin\{n,n+1,\ldots,n+d-1\}$.  

For \eqref{thm:rank-d-obstruction_item_2}, suppose $r \in \N$, and suppose  $\varphi\colon C(S^{2n},M_r)\to C(S^{2k},M_{rd})$ is unital. Let $e \in C(S^{2n},M_r)$ be a constant rank $1$ projection. Identify 
$e C(S^{2n},M_r) e \cong C(S^{2n})$. Write $\omega = \varphi|_{e C(S^{2n},M_r) e}$. Then $\omega(e)$ is a rank $d$ projection in $C(S^{2k},M_{rd})$, which can be thought of as a rank $d$ projection in $C(S^{2k},K)$. If $\varphi_*$ is injective on $K_0$, then so is $\omega_*$, which by  \eqref{thm:rank-d-obstruction_item_1} is not possible.
\end{proof}

\begin{cor}\label{cor:odd-sphere-obstruction}
	Let $n,d,k\in \N$.
	There exists a unital homomorphism
	\[
	\varphi\colon C(S^{2n-1}) \to C(S^{2k-1},M_d)
	\]
	such that $\varphi_*$ is injective on $K_1$ if and only if $k\in \{n,n+1,\ldots,n+d-1\}$.
\end{cor}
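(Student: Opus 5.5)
The existence direction is Blackadar's construction from the introduction, used with one suspension fewer. For $k=m+n-1$ with $1\le m\le d$, take a unitary $u\in C(S^{2m-1},M_m)$ generating $K_1(C(S^{2m-1}))$. This gives a unital homomorphism $C(S^1)\to C(S^{2m-1},M_m)$ which is an isomorphism on $K_1$. Suspending $2n-2$ times gives a unital homomorphism $C(S^{2n-1})\to C(S^{2k-1},M_m)$ which is injective on $K_1$. Adding $d-m$ point evaluations down the diagonal yields a unital map into $C(S^{2k-1},M_d)$; this does not change the map on $K_1$, since point evaluations factor through $\C$, whose $K_1$ is zero. So existence holds for every $k\in\{n,\ldots,n+d-1\}$.

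For nonexistence, I would reduce to \cref{thm:rank-d-obstruction} by a suspension trick. Suppose $\varphi\colon C(S^{2n-1})\to C(S^{2k-1},M_d)$ is unital and injective on $K_1$. Write $S^{2n}$ as the unreduced suspension of $S^{2n-1}$, identify $C_0(S^{2n}\setminus\{\mathrm{pt}\})$ with $SC(S^{2n-1})$, and define
\[
\Sigma\varphi\colon C(S^{2n})\to C(S^{2k},M_d)
\]
on the suspension variable by $(\Sigma\varphi)(f)(t,x)=\varphi(f(t,\cdot))(x)$. The ends $t=0,1$ of the domain suspension go to the two poles of $S^{2k}$, where $f(0,\cdot)$ and $f(1,\cdot)$ are constants and $\varphi$ is unital, so $\varphi(\text{constant})=\text{constant}$ and $\Sigma\varphi$ is well defined. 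More precisely, $\Sigma\varphi$ is the unitized map induced by $\id_{C_0(0,1)}\otimes\varphi$, and it is unital. On $K_0$, it is the identity on $\Z[1]$ and restricts on the reduced part to $S\varphi_*$. By naturality of the suspension isomorphism $K_0(SA)\cong K_1(A)$, this reduced part is identified with $\varphi_*$ on $K_1$, and $\widetilde K^0(S^{2n})\cong K_1(C(S^{2n-1}))$. Since the unit and the reduced classes are separated by rank, $(\Sigma\varphi)_*$ is injective on $K_0$. \Cref{thm:rank-d-obstruction}\eqref{thm:rank-d-obstruction_item_2} with $r=1$ then forces $k\in\{n,\ldots,n+d-1\}$.

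The main care point is checking that $\Sigma\varphi$ is a well-defined, continuous, unital homomorphism between the unreduced-suspension models, and that the $K$-theory identification is natural. Both are standard once $C(S^{2n})$ is written as $\{f\in C([0,1],C(S^{2n-1})) \mid f(0),f(1)\in\C\}$ and $C(S^{2k},M_d)$ analogously, with $\varphi$ applied pointwise in $t$. One edge case to check is $n=1$, so that $S^1$ is the domain; the argument still applies there.
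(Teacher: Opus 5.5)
Your proposal is correct and follows the paper's route. Existence comes from Blackadar's construction with $2n-2$ suspensions plus point evaluations. Nonexistence comes from suspending $\varphi$ to a unital map $C(S^{2n})\to C(S^{2k},M_d)$ that is injective on $K_0$, which contradicts \cref{thm:rank-d-obstruction}\eqref{thm:rank-d-obstruction_item_2} with $r=1$.

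One harmless slip: $SC(S^{2n-1})$ corresponds to functions on $S^{2n}$ vanishing at \emph{both} poles, so the quotient is $\C\oplus\C$, not $\C$, and $\Sigma\varphi$ is not literally a unitized map. Your explicit model $\{f\in C([0,1],C(S^{2n-1})) \mid f(0),f(1)\in\C\}$, together with the six-term sequence (where $K_1(\C\oplus\C)=0$), still gives exactly the $K$-theory identification you need.
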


\begin{proof}
	Assume $k\notin \{n,n+1,\ldots,n+d-1\}$. If such a map existed, its suspension would give a unital homomorphism
	$C(S^{2n})\to C(S^{2k},M_d)$.  Under the suspension isomorphism, injectivity on
	$K_1(C(S^{2n-1}))$ becomes injectivity on the reduced $K_0$-summand of
	$C(S^{2n})$. Injectivity on all of $K_0$ is then clear. This contradicts
	\cref{thm:rank-d-obstruction}. The converse was discussed in the introduction.
\end{proof}

\section{Remarks and open problems}\label{Sec_remarks_and_problems}
\begin{rmk}
	Although our methods in the paper rely on understanding the structure of the cohomology ring of the homomorphism space $E_{d,X}$, the ideas leading to them were motivated by tools from rational homotopy theory. Indeed, using Sullivan minimal models and our computations of rational cohomology, one can show, for example, that  
	$\pi_{4n} (E_{2,S^{2n}} ) \otimes \Q = 0$ whenever $n>1$. If $\varphi \colon C(S^{2n}) \to C(S^{4n},K)$ is a homomorphism such that $\rank (\varphi(1)) = 2$, then the corresponding map 
	$h_{\varphi} \colon S^{4n} \to E_{2,S^{2n}}$ defines an element of $\pi_{4n} (  E_{2,S^{2n}} )$. One can show that if $\varphi_*$ is injective on $K_0$, then $[h_{\varphi}]$ is a non-torsion element of $\pi_{4n} (  E_{2,S^{2n}} )$, which cannot happen. (We do not claim that in general, if $n,d,k\in\N$ and 
	$k\notin\{n,n+1,\ldots,n+d-1\}$ then $\pi_{2k} (  E_{d,S^{2n}} ) \otimes \Q = 0$.) 
	We refer the reader to \cite{FHT} for more on rational homotopy theory. 
\end{rmk}

When we fix the matrix size, we can obtain an approximate version of \cref{thm:rank-d-obstruction}.
For a completely positive map $\varphi \colon A \to B$ between two C*-algebras, and for $l \in \N$, we write $\varphi^{(l)} \colon M_l(A) \to M_l(B)$ for the amplification. For a finite subset $F \subset A$ and $\varepsilon>0$, we say that $\varphi$ is $(F,\varepsilon)$-multiplicative if $\|\varphi(a)\varphi(b) - \varphi(ab) \| < \varepsilon$ for any $a,b \in F$. 

\begin{prp}\label{prp:fixed-rank-approximate-obstruction}
	Let $n,k,d,l\in\N$. Set $ B=C(S^{2k},M_d)$.
	Let
	$p\in M_l(C(S^{2n}))$ be a projection such that
	\[
	 [p]-\rank(p) \cdot [1_{ C(S^{2n}) }]
	\in\widetilde K_0(C(S^{2n}))
	\]
	is a generator. 
	Suppose that no unital homomorphism from $C(S^{2n})$ to $B$
	is injective on $K_0$.  Then there exist a finite set
	$F\subset C(S^{2n})$ and $\varepsilon>0$ such that the following holds.
	
	Let
	$\varphi\colon C(S^{2n}) \to C(S^{2k},M_d)$
	be a unital completely positive $(F,\varepsilon)$-multiplicative map.  
	Then $1/2$ is not 
	in the spectrum of 
	$\varphi^{(l)}(p)$, and the spectral
	projection
	\[
	q_\varphi=\chi_{(1/2,\infty)}\bigl(\varphi^{(l)}(p)\bigr)
	\]
	satisfies $[q_\varphi]-\rank(p) \cdot [1_B]=0$ in $\widetilde K_0(B)$.
\end{prp}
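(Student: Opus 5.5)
The plan is a compactness/contradiction argument that reduces the approximate statement to the exact hypothesis.

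\textbf{Spectral gap.} Fix a finite set $F_0\subset C(S^{2n})$ containing the matrix entries of $p$. Standard estimates show that if $\varphi$ is unital completely positive and $(F,\varepsilon)$-multiplicative with $F\supseteq F_0$ and $\varepsilon$ small (depending on $l$ and $\|p\|$), then $\|\varphi^{(l)}(p)^2-\varphi^{(l)}(p)\|<1/4$. Hence the self-adjoint contraction $\varphi^{(l)}(p)$ has spectrum in a small neighborhood of $\{0,1\}$, so $1/2$ is not in the spectrum and $q_\varphi$ is a well-defined projection in $M_l(B)$.

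\textbf{Setting up the contradiction.} Suppose the conclusion fails for every pair $(F,\varepsilon)$. Choose an increasing sequence of finite sets $F_j$ with dense union and $\varepsilon_j\to 0$, together with u.c.p.\ maps $\varphi_j$ that are $(F_j,\varepsilon_j)$-multiplicative and satisfy $[q_{\varphi_j}]-\rank(p)[1_B]\neq 0$. Because $\widetilde K_0(C(S^{2k},M_d))\cong\Z$ is torsion free, each such class is a nonzero multiple of the Bott generator.

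\textbf{Passing to an exact homomorphism.} This is the key step. The maps $\varphi_j$ induce a unital homomorphism $\Phi\colon C(S^{2n})\to \prod_j B/\bigoplus_j B$. Under $\Phi$, the class $[p]$ maps to the class of $(q_{\varphi_j})_j$. So the task is to show that, for large $j$, $\varphi_j$ is close on $F_0$ to a genuine unital homomorphism $\psi_j\colon C(S^{2n})\to B$. Once this holds, for large $j$ the projections $\psi_j^{(l)}(p)$ and $q_{\varphi_j}$ are close, hence unitarily equivalent. Then $(\psi_j)_*([p]-\rank(p)[1])\neq 0$. Since $\widetilde K_0(C(S^{2n}))$ is generated by $[p]-\rank(p)[1]$, the map $(\psi_j)_*$ is injective on $K_0$, which contradicts the hypothesis.

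\textbf{Main obstacle.} The difficulty is producing these nearby exact homomorphisms. Commutative C*-algebras are generally not semiprojective, and the Enders--Shulman criterion fails for $S^{2n}$ when $n\ge1$. So I would not rely on semiprojectivity. Instead I would use that the target $B$ is fixed with bounded matrix size $d$. For each point $x\in S^{2k}$, the evaluation $\ev_x\circ\varphi_j$ is an approximately multiplicative u.c.p.\ map $C(S^{2n})\to M_d$. Its joint spectrum should be close to at most $d$ points with spectral projections of total rank $d$, which gives a point of $E_{d,S^{2n},d}$ depending approximately continuously on $x$. I would try to make this continuous by replacing the coordinate functions $\varphi_j(t_1),\dots,\varphi_j(t_{2n+1})$ with nearly commuting self-adjoint $d\times d$ matrices over $S^{2k}$. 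I would then apply a uniform-in-parameter Lin-type theorem, for fixed $d$ and a compact parameter space, to perturb them to exactly commuting matrices, and radially retract their joint spectrum onto $S^{2n}$.

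If that route fails, the fallback is to weaken the argument. Approximate multiplicativity alone would only be used to define a class in $\pi_{2k}$ of a small neighborhood of $E_{d,S^{2n},d}$ inside a compact space of u.c.p.\ maps. A neighborhood retraction, which exists because $E_{d,S^{2n},d}$ is a compact ENR, would then give the required honest homomorphism with the same $K$-theory.
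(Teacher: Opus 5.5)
The route you put first for the key step has a genuine gap. Your spectral-gap estimate via $\|\varphi^{(l)}(p)^2-\varphi^{(l)}(p)\|<1/4$ and your final $K$-theory bookkeeping are fine.

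The gap is this. You claim the approximate joint spectral data of $\ev_x\circ\varphi_j$ give a point of $E_{d,S^{2n},d}$ ``depending approximately continuously on $x$''. That is exactly what fails: as $x$ moves, clusters of approximate eigenvalues merge and split, the associated spectral projections jump, and there is no canonical continuous choice.

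Invoking a ``uniform-in-parameter Lin-type theorem'' does not repair this. Lin's theorem concerns two almost commuting self-adjoint matrices and is uniform in the matrix size. For $2n+1\geq 3$ self-adjoints the size-uniform statement is false, with a Bott-type obstruction of the same kind studied here. For fixed $d$, approximating a single almost commuting tuple by a commuting one is a trivial compactness fact. So the whole content of the step is continuity in $x\in S^{2k}$, and your sketch gives no mechanism for it. The natural mechanism is a neighborhood retraction onto the compact semialgebraic commuting variety. That is your fallback idea applied in a less convenient space, followed by extra polynomial-approximation estimates to get back to $p$.

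Your fallback is the right idea and is exactly the paper's proof, but it needs to be made precise. The contradiction setup, the algebra $\prod_j B/\bigoplus_j B$, and the homotopy classes in $\pi_{2k}$ are detours; what is needed is a norm estimate tying a genuine homomorphism to $\varphi^{(l)}(p)$. The paper argues directly, in five steps:
\begin{enumerate}
\item Since $E_{d,S^{2n},d}$ is a compact ANR and is closed in the compact metrizable space $\operatorname{UCP}_d(S^{2n})$, there is a retraction $r\colon V\to E_{d,S^{2n},d}$ on an open neighborhood $V$.
\item The function $\theta\mapsto\|\theta^{(l)}(p)-r(\theta)^{(l)}(p)\|$ is continuous and vanishes on $E_{d,S^{2n},d}$. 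So one shrinks $V$ until this quantity is less than $1/4$ on $V$.
\item Compactness of $\operatorname{UCP}_d(S^{2n})$, together with the fact that a limit of increasingly multiplicative UCP maps is a homomorphism, gives $(F,\varepsilon)$ such that every $(F,\varepsilon)$-multiplicative UCP map $C(S^{2n})\to M_d$ lies in $V$.
\item Then $\rho(a)(y)=r(\ev_y\circ\varphi)(a)$ defines a unital homomorphism $\rho\colon C(S^{2n})\to B$ with $\|\varphi^{(l)}(p)-\rho^{(l)}(p)\|<1/4$.
\item This gives the spectral gap and $[q_\varphi]=[\rho^{(l)}(p)]$ at once, and the hypothesis forces $\rho_*([p]-\rank(p)[1])=0$.
\end{enumerate}
The compactness step and the shrinking of $V$ are what your phrase ``with the same $K$-theory'' leaves implicit. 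They should be the core of the argument, not a fallback.
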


Let $X$ be a compact metrizable space. For $d\in\N$, let $\operatorname{UCP}_d(X)$
be the space of unital completely positive maps from $C(X)$ to $M_d$, with the topology of
pointwise norm convergence. With this topology, $\operatorname{UCP}_d(X)$ is a compact metrizable space. The following lemma is straightforward, so the proof is omitted. The space  $E_{d,S^{2n},d}$ is an ANR because it is a finite CW complex.

\begin{lem}\label{lem:fixed-rank-representation-space-anr}
	For every $d,n\in\N$, the space $E_{d,S^{2n},d}$ is a compact ANR, and it is closed in
	$\operatorname{UCP}_d(S^{2n})$.
\end{lem}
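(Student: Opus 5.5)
The lemma has two claims: $E_{d,S^{2n},d}$ is a compact ANR, and it is closed in $\operatorname{UCP}_d(S^{2n})$. The plan is to prove closedness first, by a direct limit argument, then compactness from it, and finally the ANR property by exhibiting a finite CW (in fact semialgebraic) structure.

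\emph{Closedness.} Take a net (or sequence, since everything is metrizable) $\omega_\alpha \in E_{d,S^{2n},d}$ converging pointwise in norm to a unital completely positive map $\omega\colon C(S^{2n})\to M_d$. Multiplicativity $\omega_\alpha(fg)=\omega_\alpha(f)\omega_\alpha(g)$ passes to the pointwise limit, because multiplication in $M_d$ is jointly continuous. Hence $\omega$ is a unital homomorphism, so $\omega(1)=1_d$ has rank $d$ and $\omega\in E_{d,S^{2n},d}$.

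\emph{Compactness.} $\operatorname{UCP}_d(S^{2n})$ is compact metrizable, as stated before the lemma, and we have just shown $E_{d,S^{2n},d}$ is a closed subset of it. Alternatively, $E_{d,S^{2n},d}$ is the continuous image of the compact space $(S^{2n})^d\times \Fl_d$ under the map $\pi$, as in the proof of \cref{lem_E_alg_countably_compactly_generated}.

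\emph{ANR property.} This is the main point, and I would argue it via triangulability. Here $\pi$ is the map $(x_1,\ldots,x_d;q_1,\ldots,q_d)\mapsto \bigl(f\mapsto\sum_j f(x_j)q_j\bigr)$ on the compact real algebraic set $(S^{2n})^d\times\Fl_d$, where $\Fl_d$ is realized as tuples of orthogonal rank one projections in $M_d$. Embed $E_{d,S^{2n},d}$ into a Euclidean space by evaluating on the $2n+1$ coordinate functions $t_0,\ldots,t_{2n}$. This gives a map $\omega\mapsto(\omega(t_0),\ldots,\omega(t_{2n}))\in (M_d)_{\mathrm{sa}}^{2n+1}$. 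I would check the following two properties.
\begin{itemize}
\item The map is injective, since the $t_i$ generate $C(S^{2n})$.
\item It is a homeomorphism onto its image. The pointwise topology on homomorphisms is determined by the values on generators, by uniform approximation with polynomials and the fact that $\|\omega(f)\|\le\|f\|$. Compactness of the source also gives this directly.
\end{itemize}
The image is the set of commuting $(2n+1)$-tuples of self-adjoint matrices with $\sum_i A_i^2=1$. Equivalently, it is the polynomial image of $\pi$ composed with evaluation, so it is a compact semialgebraic subset of a real vector space. Compact semialgebraic sets are triangulable, that is, homeomorphic to finite simplicial complexes, and finite CW complexes are ANRs. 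This matches the remark preceding the lemma.

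I expect the only genuinely nontrivial input to be citing the triangulation theorem for semialgebraic sets, for instance Bochnak--Coste--Roy. One could avoid that theorem by noting that a compact locally contractible finite-dimensional metrizable space is an ANR. Local contractibility can be obtained by perturbing nearby homomorphisms toward $\omega$ via the fiber description of \cref{lem:general-quotient-fiber}, but the semialgebraic route is cleaner, and I would use it.
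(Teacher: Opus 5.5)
Your proposal is correct and matches the paper's omitted argument: closedness holds because pointwise-norm limits of unital homomorphisms into $M_d$ are again homomorphisms, and $E_{d,S^{2n},d}$ is an ANR because it is a finite CW complex. The paper only asserts the CW structure; your identification of $E_{d,S^{2n},d}$ with the compact real algebraic set of commuting self-adjoint $(2n+1)$-tuples satisfying $\sum_i A_i^2=1$, triangulated by the Bochnak--Coste--Roy theorem, is a sound way to justify it.
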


\begin{lem}\label{lem:fixed-rank-approximate-retraction}
	Let $n,d,l\in\N$. Let $p \in M_l(C(S^{2n}))$ be a projection. There are a finite set $F\subset C(S^{2n})$ and
	$\varepsilon>0$ with the following property.  If
	$\theta\colon C(S^{2n})\to M_d$ is a unital completely positive
	$(F,\varepsilon)$-multiplicative map, then there is a unital homomorphism
	$\rho\colon C(S^{2n})\to M_d$ such that $\|\theta^{(l)}(p)-\rho^{(l)}(p)\|<1/4$.
	
	Moreover, if $Y$ is a compact Hausdorff space and the maps
	$\theta_y\colon C(S^{2n})\to M_d$, for $y\in Y$, are a pointwise norm
	continuous family of UCP $(F,\varepsilon)$-multiplicative maps, then one can choose 
	a pointwise norm continuous family of
	homomorphisms $\rho_y$ with the above property. 
\end{lem}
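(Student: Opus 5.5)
We will prove \cref{lem:fixed-rank-approximate-retraction} by an ANR retraction argument, using \cref{lem:fixed-rank-representation-space-anr}. Write $W=\operatorname{UCP}_d(S^{2n})$ and $E=E_{d,S^{2n},d}$. Both are compact metrizable, and $E$ is a closed subset of $W$ and a compact ANR. Since $E$ is a compact metric ANR, it is a retract of some open neighborhood $U$ of $E$ in $W$. Let $r\colon U\to E$ be such a retraction.

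The first step is to find $F$ and $\varepsilon$ such that every UCP $(F,\varepsilon)$-multiplicative map lies in a prescribed neighborhood of $E$. Fix a metric on $W$ defining the pointwise norm topology, for example $\sum_i 2^{-i}\|\theta(f_i)-\theta'(f_i)\|$ with $\{f_i\}$ dense in the unit ball. We argue by compactness. Choose finite sets $F_1\subset F_2\subset\cdots$ whose union is dense in the unit ball of $C(S^{2n})$, and let $\theta_j$ be $(F_j,1/j)$-multiplicative UCP maps. Every limit point of $(\theta_j)$ in $W$ is UCP and multiplicative on a dense set, hence multiplicative, hence lies in $E$. It follows that for every open $V\supset E$ there are $F,\varepsilon$ with all $(F,\varepsilon)$-multiplicative UCP maps in $V$.

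The second step chooses $V$. The map $\theta\mapsto\theta^{(l)}(p)$ is continuous $W\to M_l(M_d)$, since $p$ has finitely many matrix entries. The map $\theta\mapsto\|\theta^{(l)}(p)-r(\theta)^{(l)}(p)\|$ is continuous on $U$ and vanishes on $E$. Let
\[
V=\bigl\{\theta\in U \mid \|\theta^{(l)}(p)-r(\theta)^{(l)}(p)\|<1/4\bigr\},
\]
which is an open set containing $E$. Take $F,\varepsilon$ for this $V$ as in the first step, and set $\rho=r(\theta)$. This $\rho$ is a unital homomorphism, because $\omega(1)$ is a rank $d$ projection in $M_d$ and hence equals $1$.

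For the parametrized statement, given a continuous family $y\mapsto\theta_y$ of $(F,\varepsilon)$-multiplicative UCP maps, each $\theta_y$ lies in $V$. Setting $\rho_y=r(\theta_y)$ gives a continuous family, since $r$ is continuous. This is the main advantage of using a single global retraction rather than a pointwise perturbation.

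The only substantive input is that $E$ is a compact ANR, which lets a neighborhood retract onto it; this is supplied by \cref{lem:fixed-rank-representation-space-anr}. The one point needing care is the compactness argument in the first step, namely that pointwise limits of increasingly multiplicative UCP maps are homomorphisms. This is routine.
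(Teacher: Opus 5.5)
Your proposal is correct and takes essentially the same approach as the paper. Both use a neighborhood retraction $r$ onto the compact ANR $E_{d,S^{2n},d}$, shrink the neighborhood so that $\|\theta^{(l)}(p)-r(\theta)^{(l)}(p)\|<1/4$, show by compactness that sufficiently multiplicative UCP maps land in that neighborhood, and get the parametrized version from the continuity of $r$.
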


\begin{proof}
	By \cref{lem:fixed-rank-representation-space-anr}, the closed subspace
	$E_{d,S^{2n},d}\subset \operatorname{UCP}_d(S^{2n})$ is a compact ANR.  Hence
	there exist an open neighborhood $V$ of $E_{d,S^{2n},d}$ in
	$\operatorname{UCP}_d(S^{2n})$ and a continuous retraction
	\[
	r \colon V\to E_{d,S^{2n},d}.
	\]
	The function
	\[
		\theta \mapsto
		\|\theta^{(l)}(p)-r (\theta)^{(l)}(p)\|
	\]
	is continuous on $V$ and vanishes on $E_{d,S^{2n},d}$. After replacing $V$ by a smaller open neighborhood of $E_{d,S^{2n},d}$
	we may assume that  $\|\theta^{(l)}(p)-r (\theta)^{(l)}(p)\| < 1/4$ for all $\theta\in V$.
	
	It remains to see that sufficiently multiplicative UCP maps land in this
	neighborhood.  Suppose not.  Choose a dense sequence $(f_m)_{m \in \N}$ in the unit ball
	of $C(S^{2n})$.  For every $m$ there would then be a UCP map
	\[
	\theta_m\colon C(S^{2n})\to M_d
	\]
	which is $(\{f_1,\ldots,f_m\},1/m)$-multiplicative but does not belong to
	$V$.  Because 
	$\operatorname{UCP}_d(S^{2n})$ is compact and metrizable, by passing to 
	a subsequence, we may assume without loss of generality that this sequence converges. 
	Let $\theta$ denote the limit. Then $\theta\in E_{d,S^{2n},d}$ but also $\theta \not \in V$, which is a contradiction.
	
	The last assertion follows because the retraction $r \colon V\to E_{d,S^{2n},d}$
	is continuous. 
\end{proof}

\begin{proof}[Proof of \cref{prp:fixed-rank-approximate-obstruction}]
	Let $F$ and $\varepsilon$ be as in
	\cref{lem:fixed-rank-approximate-retraction}.  For $y\in S^{2k}$ define
	\[
	\varphi_y=\ev_y\circ\varphi\colon C(S^{2n})\to M_d.
	\]
	Then $y\mapsto\varphi_y$ is a continuous map from $S^{2k}$ to
	$\operatorname{UCP}_d(S^{2n})$, and $\varphi_y$ is
	$(F,\varepsilon)$-multiplicative for every $y$.  Therefore the retraction from
	\cref{lem:fixed-rank-approximate-retraction} gives a continuous family of
	unital homomorphisms
	$\rho_y\colon C(S^{2n})\to M_d$ such that $\| ( \varphi_y )^{(l)}(p)-( \rho_y )^{(l)}(p)\|<1/4$.
	Define
	$
	\rho\colon C(S^{2n})\to C(S^{2k},M_d)$ by
	$\rho(a)(y)=\rho_y(a)$.
	Then $\rho$ is a unital homomorphism, and
	$\|\varphi^{(l)}(p)-\rho^{(l)}(p)\|<1/4$.
	
	In particular, $1/2$ is not in the spectrum of 
	$\varphi^{(l)}(p)$.  
	Now, $q_{\varphi}$ is homotopic to $\rho^{(l)}(p)$, 
	so $[q_{\varphi}]-\rank(p) \cdot [1_B]=0$, as required.
\end{proof}

Villadsen constructed simple C*-algebras that exhibited non-stable phenomena which appear in homogeneous C*-algebras, and indeed, Blackadar's motivation for asking his question was an earlier attempt to construct such examples. Though we showed that the maps Blackadar looked for do not exist, our obstruction itself presents a different kind of non-stable phenomenon. It seems natural, then, to see if this type of obstruction can survive in the simple setting. More concretely, the following, if it exists, would exhibit a new phenomenon.
\begin{qst}\label{qst:exotic_AH}
	Do there exist a simple unital AH algebra $A$ and $n \in \N$ for which there exists
	an injective homomorphism of ordered $K_0$-groups  
	\[
	\alpha \colon K_0(C(S^{2n})) \to K_0(A)
	\]
	 with $\alpha([1_{C(S^{2n})}]) = [1_A]$ 
	but such that there is no unital homomorphism $\varphi \colon C(S^{2n}) \to A$ with $\varphi_* = \alpha$?
\end{qst}
As a first step, one would likely need a better version of \cref{prp:fixed-rank-approximate-obstruction}, which is uniform in $d$. Concretely, we could ask the following question.
\begin{qst}
	Let $n,l \in \N$, and let $p \in M_l( C(S^{2n})) $ be a projection such that $[p] - \rank(p)\cdot [1_{C(S^{2n})}]$ is a generator of $\widetilde{K}_0( C( S^{2n} ))$. 
	
	 Let $(k_m)_{m \in \N}$ and $(d_m)_{m \in \N}$ be increasing sequences of natural numbers such that no unital homomorphism from $C(S^{2n})$ to $B_m = C(S^{2k_m},M_{d_m})$ is injective on $K_0$. Do there exist a finite subset $F \subset  C( S^{2n} )$ and $\varepsilon > 0$ such that for any $m$ and for any unital completely positive $(F,\varepsilon)$-multiplicative map $\varphi \colon C( S^{2n} ) \to B_m$, the spectrum of $\varphi^{(l)}(p)$ does not contain $1/2$, and the spectral projection $q = \chi_{(1/2,\infty)}(\varphi^{(l)}(p))$ is stably equivalent to a trivial projection?
\end{qst}

\bibliographystyle{plain}
\bibliography{bibliography}

\end{document}